\newtheorem{theorem}{Theorem}
\newtheorem{lemma}[theorem]{Lemma}
\newtheorem{proposition}[theorem]{Proposition}
\newtheorem{corollary}[theorem]{Corollary}
\newcommand{\inv}{{\operatorname{inv}}}
\newcommand{\unorient}[1]{\widehat{#1}}
\newcommand{\sym}[1]{#1^{(\rm s)}}
\newcommand{\symc}[1]{\overline{#1}}
\newcommand\PP{{\cal P}}
\newcommand\NN{{\mathbb N}}
\DeclareTextCompositeCommand{\v}{OT1}{l}{l\nobreak\hspace{-.1em}'}
\DeclareTextCompositeCommand{\v}{OT1}{t}{t\nobreak\hspace{-.1em}'\nobreak\hspace{-.15em}}
\DeclareMathOperator{\sgn}{sgn}
\begin{document}
\title{Uniform Tur\'an density---palette classification\thanks{This research started during the first author's stay at HUN-REN Alfr\'ed R\'enyi Institute of Mathematics in the spring 2024, which was supported by the ERC Synergy Grant No. 810115 (DYNASNET). The first and fourth authors also acknowledge support by the National Science Foundation under Grant No. DMS-1928930 while the authors were in residence at the Simons Laufer Mathematical Sciences Institute (formerly MSRI) in Berkeley, California, during the Spring 2025 semester. The work presented in this manuscript appeared in the form of an extended abstract in the proceedings of Eurocomb'25.}}

\author{Daniel Kr{\'a}\v l\thanks{Institute of Mathematics, Leipzig University, Augustusplatz 10, 04109 Leipzig, and Max Planck Institute for Mathematics in the Sciences, Inselstra{\ss}e 22, 04103 Leipzig, Germany. E-mail: {\tt daniel.kral@uni-leipzig.de}. Previous affiliation: Faculty of Informatics, Masaryk University, Botanick\'a 68A, 602 00 Brno, Czech Republic.}\and
	Filip Ku\v cer\'ak\thanks{Faculty of Informatics, Masaryk University, Botanick\'a 68A, 602 00 Brno, Czech Republic. E-mail: {\tt 514391@fi.muni.cz}.}\and
        Ander Lamaison\thanks{Extremal Combinatorics and Probability Group (ECOPRO), Institute for Basic Science (IBS), Daejeon, South Korea. This author was supported by IBS-R029-C4. E-mail: {\tt ander@ibs.re.kr}.}\and
        G\'abor Tardos\thanks{HUN-REN R\'enyi Institute, Re\'altanoda utca 13--15, Budapest, Hungary. This author was also supported by the ERC Advanced Grants ``ERMiD'' and ``GeoScape'' and by the National Research, Development and Innovation Office project K-132696. E-mail: {\tt tardos@renyi.hu}.}}

\date{}

\maketitle

\begin{abstract}
In the 1980s,
Erd\H{o}s and S\'os initiated the study of Tur\'an hypergraph problems with a uniformity condition on the distribution of edges,
i.e., determining density thresholds for the existence of a hypergraph $H$ in a host hypergraph with edges uniformly distributed.
In particular, Erd\H{o}s and S\'os asked to determine the uniform Tur\'an densities of the hypergraphs $K_4^{(3)-}$ and $K_4^{(3)}$.
After more than 30 years,
the former was solved by Glebov, Kr\'al' and Volec [Israel J. Math. 211 (2016), 349--366] and
Reiher, R\"odl and Schacht [J. Eur. Math. Soc. 20 (2018), 1139--1159],
while the latter still remains open.
In these two cases and several additional cases, the tight lower bounds are provided by a so-called palette construction.

Lamaison [arXiv:2408.09643] has recently showed that the uniform Tur\'an density of a $3$-uniform hypergraph $H$
is equal to the supremum of the densities of palettes that $H$ is not colorable with.
We give a necessary and sufficient condition, which is easy to verify,
on the existence of a $3$-uniform hypergraph colorable by a set of palettes and not colorable by another given set of palettes.
We also demonstrate how our result
can be used to prove the existence of $3$-uniform hypergraphs with specific values of the uniform Tur\'an density.
\end{abstract}

\maketitle

\section{Introduction}
\label{sec:intro}

Tur\'an problems are one of the most fundamental problems in extremal combinatorics;
they ask to determine the minimum density, which is called \emph{Tur\'an density}, that
guarantees the existence of a given substructure.
The name originates from the classical theorem of Tur\'an~\cite{Tur41},
which determines Tur\'an densities for all complete graphs.
While Tur\'an densities of graphs are well-understood due to work of Erd\H os and Stone~\cite{ErdS46} from the 1940s,
also see~\cite{ErdS66},
the same is not the case for Tur\'an densities of hypergraphs,
where even some of the most basic problems have stayed unchallenged for many decades.
In particular, 
the Tetrahedron Problem, which asks for the Tur\'an density of $K_4^{(3)}$, the complete $3$-uniform hypergraph with $4$ vertices,
has resisted attempts for its resolution since its formulation over 80 years ago~\cite{Tur41}, and
determining the Tur\'an density of all complete $k$-uniform hypergraphs for $k\ge 3$
is a \$1\,000 problem of Erd\H os~\cite{Erd81} (Erd\H os also offered \$500 for determining the Tur\'an density of
any single complete $k$-uniform hypergraph with at least $k+1$ vertices).
We refer to~\cite{FraF84,ChuL99,Raz10} for additional results and
also to the surveys by Keevash~\cite{Kee11} and Sidorenko~\cite{Sid95} for a more comprehensive treatment of the matter.

Almost all known and conjectured extremal constructions for Tur\'an problems in the (hyper)graph setting have large independent sets,
i.e., the edges are spread in a highly non-uniform way in the host hypergraph.
This led Erd\H os and S\'os~\cite{ErdS82,Erd90} to studying Tur\'an densities
with the additional requirement that the edges of the host hypergraph are distributed uniformly.
Formally, we say that an $n$-vertex $k$-uniform hypergraph is $(d,\varepsilon)$-uniformly dense
if every subset of $n'\ge\varepsilon n$ vertices spans at least $d\binom{n'}{k}$ edges, and
the \emph{uniform Tur\'an density} of a $k$-uniform hypergraph $H$
is the infimum over all $d$ such that for every $\varepsilon>0$,
there exists $n_{\varepsilon}$ such that
every $k$-uniform hypergraph with $n\ge n_{\varepsilon}$ vertices that is $(d,\varepsilon)$-uniformly dense
contains $H$ as a subhypergraph.
We remark that the notion is interesting for $k$-uniform hypergraphs with $k\ge 3$ only,
since it can be shown that the uniform Tur\'an density of every graph is equal to zero.

Until about a decade ago, there was very little progress concerning the uniform Tur\'an densities of hypergraphs
since Erd\H os and S\'os introduced the notion and asked about determining the uniform Tur\'an densities of
the complete $3$-uniform hypergraph $K_4^{(3)}$ (tetrahedron) and the $3$-uniform hypergraph $K_4^{(3)-}$ (broken tetrahedron),
which is $K_4^{(3)}$ with an edge removed.
The latter problem was resolved by Glebov, Volec and the first author~\cite{GleKV16}
using the flag algebra method of Razborov~\cite{Raz07}, and
a direct combinatorial argument using the hypergraph regularity method was given by Reiher, R\"odl and Schacht~\cite{ReiRS18a}.
The use of hypergraph regularity method revolutionized the area.
Reiher, R\"odl and Schacht~\cite{ReiRS18} classified $3$-uniform hypergraphs with uniform Tur\'an density equal to $0$, and
showed that the uniform Tur\'an density of every $3$-uniform hypergraph is at least $1/27$ unless it is equal to $0$;
the value of $1/27$ was shown to be tight in~\cite{GarKL24}.
Families of $3$-uniform hypergraphs with uniform Tur\'an density equal to $1/27$, $4/27$, $1/4$ and $8/27$
were identified in~\cite{BucCKMM23,CheS22,GarIKL24,LiLWZ23}, and
very recently the uniform Tur\'an densities of all generalized stars, largely extending the result concerning $K_4^{(3)-}$,
have been determined by Wu and the third author~\cite{LamW24}.
For further exposition including results on stronger notions of uniform density such as e.g.~\cite{ReiRS16,ReiRS18b,ReiRS18c},
we refer the reader to the survey by Reiher~\cite{Rei20} on the topic.

In our further exposition, we restrict our attention to $3$-uniform hypergraphs only (while dropping the adjective).
All lower bounds on the uniform Tur\'an density of a hypergraph come from so-called palette constructions that
extend the lower bound construction of R\"odl~\cite{Rod86} in the tetrahedron case.
A \emph{palette} $\PP$ is a pair $(C,T)$ with $T\subseteq C^3$;
we refer to the elements of $C$ as \emph{colors} and the elements of $T$ as (feasible) \emph{triples}.
The \emph{density} of a palette $\PP=(C,T)$, which is denoted by $d(\PP)$, is $|T|/|C|^3$.
Consider the following random construction of an $n$-vertex hypergraph $H_n$ with vertices $v_1,\ldots,v_n$:
color each pair $(v_i,v_j)$, $1\le i<j\le n$, uniformly at random with a color $c\in C$ and
include $\{v_i,v_j,v_k\}$, $1\le i<j<k\le n$, as an edge if $(x,y,z)\in T$
where $x$ is the color of $(v_i,v_j)$, $y$ is the color of $(v_i,v_k)$ and $z$ is the color of $(v_j,v_k)$.
Observe that $H_n$ is $(d(\PP)-\varepsilon,\varepsilon)$-uniformly dense with positive probability
when $n$ is sufficiently large (with respect to $\varepsilon>0$).

We say that a hypergraph $H$ is $\PP$-colorable
if there exists an order $v_1,\ldots,v_N$ of its vertices and coloring of their pairs with the colors from $C$ such that
every edge $\{v_i,v_j,v_k\}$ of $H$, $1\le i<j<k\le N$, satisfies that
$(x,y,z)\in T$ where $x$ is the color of $(v_i,v_j)$, $y$ is the color of $(v_i,v_k)$ and $z$ is the color of $(v_j,v_k)$.
Observe that if $H$ is not $\PP$-colorable for a palette $\PP$,
then the uniform Tur\'an density of $H$ is at least $d(\PP)$ as
$H$ is avoided by any random hypergraph $H_n$ constructed in the previous paragraph.
We may generalize the construction of the hypergraph $H_n$
by coloring each pair of vertices with a color $c\in C$ with probability $p(c)$,
which is not necessarily equal to $1/|C|$.
This leads us to the definition of the \emph{Lagrangian} $L(\PP)$ of a palette $\PP$, which is
\[\max_{p:C\to [0,1], \sum p=1}\sum_{(x,y,z)\in T}p(x)p(y)p(z)\]
where the maximum is taken over all probability distributions $p$ on $C$,
i.e., functions $p:C\to [0,1]$ such that $\sum_{x\in C}p(x)=1$,
Observe that for every $\varepsilon>0$ and sufficiently many vertices,
the generalized construction yields a $\PP$-colorable $(L(\PP)-\varepsilon,\varepsilon)$-uniformly dense hypergraph with high probability.
Since such a hypergraph $H$ that is not $\PP$-colorable
cannot be contained in such an $(L(\PP)-\varepsilon,\varepsilon)$-uniformly dense hypergraph,
the Lagrangian of a palette $\PP$ is a lower bound on the uniform Tur\'an density of $H$.
We formulate this conclusion as a proposition for future reference.

\begin{proposition}
\label{prop:Lagrangian}
Let $H$ be a $3$-uniform hypergraph and $\PP$ a palette.
If the hypergraph $H$ is not $\PP$-colorable,
then the uniform Tur\'an density of $H$ is at least the Lagrangian of $\PP$.
\end{proposition}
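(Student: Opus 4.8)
The plan is to show that for every $d<L(\PP)$ and every $\varepsilon>0$ there exist arbitrarily large $n$-vertex hypergraphs that are $(d,\varepsilon)$-uniformly dense and contain no copy of $H$. Since the uniform Tur\'an density of $H$ is, by definition, the infimum of those $d$ for which every sufficiently large $(d,\varepsilon)$-uniformly dense hypergraph contains $H$ (for all $\varepsilon>0$), producing such hypergraphs for every $d<L(\PP)$ shows that the uniform Tur\'an density of $H$ is at least $L(\PP)$. The hypergraphs we use are instances $H_n$ of the generalized palette construction described before the statement, in which each pair of vertices is colored independently from $C$ according to a probability distribution $p$ attaining the maximum in the definition of $L(\PP)$; such a maximizer exists because the objective is a polynomial in $p$ and the set of probability distributions on the finite color set $C$ is compact.

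First I would dispose of the deterministic half: $H_n$ contains no copy of $H$, regardless of how the colors turn out. Indeed, by construction the vertex order $v_1,\dots,v_n$ of $H_n$ together with the chosen pair-coloring is a valid $\PP$-coloring of $H_n$, so $H_n$ is $\PP$-colorable; and whenever a hypergraph $H'$ embeds into a $\PP$-colorable hypergraph, $H'$ is itself $\PP$-colorable, since one can order $V(H')$ by the order of the images of its vertices and color every pair of $H'$ by the color of the corresponding image pair, so that each edge of $H'$ maps to an edge of the host and the transferred coloring satisfies the required triple condition. As $H$ is not $\PP$-colorable by hypothesis, it cannot embed into the $\PP$-colorable hypergraph $H_n$.

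Next I would treat the probabilistic half: $H_n$ is $(d,\varepsilon)$-uniformly dense with high probability. Fix $\varepsilon>0$ and a set $S$ of $n'\ge\varepsilon n$ vertices, and let $e(S)$ denote the number of edges of $H_n$ inside $S$. For a triple $\{v_i,v_j,v_k\}\subseteq S$ with $i<j<k$, the three pairs it spans receive independent colors, so it forms an edge with probability exactly $\sum_{(x,y,z)\in T}p(x)p(y)p(z)=L(\PP)$, and therefore $\mathbb{E}[e(S)]=L(\PP)\binom{n'}{3}$. Now $e(S)$ is a function of the $\binom{n'}{2}$ colors of pairs inside $S$, and recoloring a single pair changes $e(S)$ by at most $n'$, so McDiarmid's bounded-differences inequality gives
\[
\Pr\!\left[e(S)<d\tbinom{n'}{3}\right]=\Pr\!\left[e(S)<\mathbb{E}[e(S)]-\bigl(L(\PP)-d\bigr)\tbinom{n'}{3}\right]\le\exp\!\left(-c\,\bigl(L(\PP)-d\bigr)^{2}(n')^{2}\right)
\]
for an absolute constant $c>0$ and all sufficiently large $n'$. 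A union bound over the at most $2^{n}$ subsets $S$ with $|S|\ge\varepsilon n$, using $n'\ge\varepsilon n$, shows that the probability that some such $S$ spans fewer than $d\binom{n'}{3}$ edges is at most $2^{n}\exp\!\left(-c\,(L(\PP)-d)^{2}\varepsilon^{2}n^{2}\right)$, which tends to $0$ as $n\to\infty$. Hence for $n$ large the event that $H_n$ is $(d,\varepsilon)$-uniformly dense has positive probability, and combining it with the certain event that $H_n$ avoids $H$ produces the desired hypergraph.

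The one step that is not entirely routine is the concentration estimate, but even there the content is only that the deviation to be ruled out has order $(n')^{3}$ while the sum of squared Lipschitz constants has order $(n')^{4}$, so McDiarmid's exponent has order $(n')^{2}\ge\varepsilon^{2}n^{2}$ and comfortably dominates the $2^{n}$ factor from the union bound; everything else is immediate from the construction and from the assumption that $H$ is not $\PP$-colorable.
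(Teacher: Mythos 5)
Your proof is correct and follows the same approach that the paper sketches informally just before the proposition: build the random hypergraph $H_n$ from the palette using the Lagrangian-maximizing distribution $p$, note it is $\PP$-colorable by construction and therefore avoids $H$, and verify via concentration (you use McDiarmid plus a union bound, which is exactly what the paper's unspoken "with high probability" step requires) that $H_n$ is $(d,\varepsilon)$-uniformly dense for any $d<L(\PP)$. You are just filling in the details the paper leaves to the reader, and everything — the embedding argument that pulls a $\PP$-coloring back to a subhypergraph, the expectation computation, and the Lipschitz bookkeeping — checks out.
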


The empirical evidence was strongly suggesting that the palette constructions always provide tight lower bounds.
This has recently been proven by the third author~\cite{Lam24}
using a combination of the hypergraph regularity method and probabilistic arguments.

\begin{theorem}
\label{thm:palette}
Let $H$ be any $3$-uniform hypergraph.
The uniform Tur\'an density of $H$
is equal to the supremum of the Lagrangian of a palette $\PP$ such that $H$ is not $\PP$-colorable.
\end{theorem}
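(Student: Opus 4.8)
\noindent\emph{Proof idea.}\quad Write $\pi(H)$ for the uniform Tur\'an density of $H$ and set $s:=\sup\{L(\PP):H\text{ is not }\PP\text{-colorable}\}$. Applying Proposition~\ref{prop:Lagrangian} to every palette $\PP$ for which $H$ is not $\PP$-colorable and taking the supremum gives $\pi(H)\ge s$, so the task is to show $\pi(H)\le s$. It suffices to prove $\pi(H)\le s+\eta$ for every $\eta>0$. Fix $\eta>0$ and suppose this fails; then there are an $\varepsilon>0$ and a sequence $(G_n)_{n\ge1}$ of $(s+\eta,\varepsilon)$-uniformly dense hypergraphs with $|V(G_n)|\to\infty$ and $H\not\subseteq G_n$. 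The plan is to distil from this sequence a single palette $\PP$ with $H$ not $\PP$-colorable and $L(\PP)>s$, contradicting the choice of $s$.

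First I would run the $3$-uniform hypergraph regularity method on each $G_n$. This yields roughly balanced vertex classes $V_1,\dots,V_{t_n}$ with $t_n\to\infty$ (which the regularity lemma lets us demand), for each pair of classes a partition of the complete bipartite graph between them into boundedly many regular ``constituents'' carrying densities (indexed, after padding, by a fixed set $[\ell]$), and for each triple of classes and each choice of one constituent on each of its three pairs a ``triad density'' $\delta\in[0,1]$, with all but a vanishing fraction of triads regular. Uniform density of $G_n$ forces that, when restricted to the union of any positive fraction of the classes, the average of the triad densities weighted by the constituent densities is at least $s+\eta-o(1)$; establishing this, and thereby absorbing the interplay between the two $\varepsilon$'s in the definition, is the technical core in the style of Reiher--R\"odl--Schacht. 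Passing to a subsequence and diagonalizing, I would extract a limit reduced hypergraph $\mathcal A$ indexed by $\NN$ with limiting constituent weights $\mu^{ij}$ on each pair and limiting triad densities $\delta^{ijk}_{abc}$, enjoying: (i) for every infinite $\Phi\subseteq\NN$, the weighted average of the $\delta^{ijk}_{abc}$ over triples from $\Phi$ is, in the limit, at least $s+\eta$; and (ii) there is no choice of classes $i_1<\dots<i_N$, where $N=|V(H)|$, and of one constituent on each pair $\{i_p,i_q\}$ such that all edge-triads of $H$ have positive density, for by the $3$-uniform counting and embedding lemma such a configuration would force $H\subseteq G_n$ for large $n$, contradicting $H$-freeness.

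Next I would homogenize $\mathcal A$ by Ramsey's theorem. Fix $\gamma\in(0,\eta)$. Colour each pair $\{i,j\}$ by a fixed discretization of its weight vector $(\mu^{ij}_a)_{a\in[\ell]}$ and each triple $\{i<j<k\}$ by the set $\{(a,b,c)\in[\ell]^3:\delta^{ijk}_{abc}\ge\gamma\}$; each uses only finitely many colours, so Ramsey's theorem for pairs and for triples, applied in turn, together with a further compactness argument to remove the discretization, yields an infinite $\Phi\subseteq\NN$ on which all weight vectors agree with a common probability vector $\mu^*$ on $[\ell]$ and all triples receive a common value $T^*\subseteq[\ell]^3$. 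Put $\PP:=([\ell],T^*)$. If $H$ were $\PP$-colorable, then an ordering $v_1,\dots,v_N$ of $V(H)$ and a colouring $\chi$ of its pairs with every edge-triple lying in $T^*$ would, upon sending $v_p$ to the $p$-th element of $\Phi$ and the pair $\{v_p,v_q\}$ with $p<q$ to the constituent indexed $\chi(\{v_p,v_q\})$, produce a configuration as forbidden by (ii), since every edge-triad would then have density at least $\gamma>0$; here the ordered-triple convention $(x,y,z)$ in the definition of a palette is exactly matched by the $i<j<k$ structure of $\mathcal A$. Hence $H$ is not $\PP$-colorable. On the other hand, applying (i) to $\Phi$ and using that $\delta^{ijk}_{abc}\le1$ always while $\delta^{ijk}_{abc}<\gamma$ whenever $(a,b,c)\notin T^*$, one obtains $\sum_{(a,b,c)\in T^*}\mu^*_a\mu^*_b\mu^*_c\ge s+\eta-\gamma$, whence $L(\PP)\ge\sum_{(a,b,c)\in T^*}\mu^*_a\mu^*_b\mu^*_c>s$ because $\mu^*$ is a feasible point in the maximization defining $L(\PP)$. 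Thus $L(\PP)>s$ while $H$ is not $\PP$-colorable, contradicting the definition of $s$; this proves $\pi(H)\le s+\eta$ for every $\eta>0$, hence $\pi(H)\le s$, and combined with Proposition~\ref{prop:Lagrangian} we get $\pi(H)=s$.

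I expect the main obstacle to be the regularity backbone of the second paragraph: deducing property (i) of the limit object from the single-$\varepsilon$ uniform density hypothesis, and invoking the counting lemma for (ii), is the heavy lifting, although by now standard in this area. The genuinely new point — and the reason the Lagrangian, rather than the bare density $|T|/|C|^3$, governs the answer — is the homogenization step: Ramsey's theorem is precisely what collapses an arbitrary reduced hypergraph to a palette, and the homogenized constituent weights $\mu^*$, which need not be uniform, realize a feasible point of the optimization defining $L(\PP)$, which is exactly what makes the matching bound $L(\PP)\ge s+\eta-\gamma$ available.
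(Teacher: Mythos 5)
This paper does not actually prove Theorem~\ref{thm:palette}; it imports it from \cite{Lam24}, so there is no in-paper proof to compare against, and your outline has to stand on its own. Its architecture (regularity $\to$ reduced hypergraph $\to$ Ramsey homogenization $\to$ palette, with the non-uniform limiting weights $\mu^*$ feeding the Lagrangian) is the right one. But property (i) of your limit object is false as stated, and it is precisely the load-bearing step. Negating $\pi(H)\le s+\eta$ yields counterexamples that are $(s+\eta,\varepsilon)$-uniformly dense for one \emph{fixed} $\varepsilon>0$, and the Erd\H{o}s--S\'os condition constrains only vertex subsets of size at least $\varepsilon n$. Such a hypergraph may contain an independent set on just under $\varepsilon n$ vertices; refining the regular partition along it produces on the order of $\varepsilon t$ classes all of whose mutual triads have density $0$, which in your limit object is an \emph{infinite} index set $\Phi$ on which the weighted average of the $\delta^{ijk}_{abc}$ is $0$, not $s+\eta$. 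This is fatal rather than cosmetic, because the Ramsey step gives you no control over which infinite homogeneous set it returns: it may return exactly this degenerate $\Phi$, in which case $T^*$ is empty or nearly so, the extracted palette has Lagrangian far below $s$, and no contradiction with the definition of $s$ is obtained. Your phrase ``any positive fraction of the classes'' already overclaims in the finite setting (only fractions $\ge\varepsilon$ are constrained), and the passage to ``every infinite $\Phi$'' compounds it.

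The missing idea is the reduction from the Erd\H{o}s--S\'os density notion to the robust one in which one demands $e(X,Y,Z)\ge d\,|X|\,|Y|\,|Z|-\varepsilon n^3$ for \emph{all} triples of vertex sets; these two notions are known (by a separate, non-trivial argument in the Reiher--R\"odl--Schacht framework) to determine the same uniform Tur\'an density. Under the robust notion the negation supplies counterexamples with $\varepsilon\to 0$ along the sequence, so every individual triple of regularity classes --- indeed every individual triad --- is constrained, and the strong form of (i) needed for an arbitrary Ramsey-selected infinite $\Phi$ does hold. Equivalently, one must first ``clean'' the reduced hypergraph so that every constituent triad system is dense before homogenizing. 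Without one of these ingredients the argument as written breaks at the step you yourself identify as the technical core; with it, the rest of your outline (the embedding lemma for non-colorability, the homogenization, and the Lagrangian bound via $\mu^*$) goes through.
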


\noindent We remark that it is not hard to show that the Lagrangian of a palette
can be replaced with the density in the statement of Theorem~\ref{thm:palette}.

Theorem~\ref{thm:palette} presents a breakthrough in regard to the methodology for determining the uniform Tur\'an densities as
it permits replacing complex arguments involving the hypergraph regularity method
with much simpler arguments concerning colorability by palettes.
In particular,
Theorem~\ref{thm:palette} was essential in computing the uniform Tur\'an densities of generalized stars by Wu and the third author~\cite{LamW24},
a result vastly extending the solution of the problem of Erd\H os-S\'os
on the uniform Tur\'an density of $K_4^{(3)-}$.

Given that the colorability of palettes with certain densities reflects the uniform Tur\'an density of a hypergraph $H$,
we arrive at the following question:

\emph{For which palettes $\PP_1,\ldots,\PP_r$ and $\PP'_1,\ldots,\PP'_q$
      does there exist a hypergraph $H$ that is $\PP_i$-colorable for every $i=1,\ldots,r$
      but not $\PP'_j$-colorable for every $j=1,\ldots,q$?}

The main motivation for this question (as we demonstrate in Section~\ref{sec:example}),
is obtaining a simple tool for constructing hypergraphs with a given uniform Tur\'an density.
Indeed, the case when $r=1$ was resolved independently of us
by King, Piga, Sales and Sch\"ulke\cite{KinPSS} in their work on feasible uniform Tur\'an densities of families of hypergraphs,
where they showed every real that is the Lagrangian of a palette
is the uniform Tur\'an density of a finite family of hypergraphs.

Our main result, which is presented in Theorem~\ref{thm:multi} and Corollary~\ref{cor:multi},
completely answers the question by giving a necessary and sufficient condition,
which is easy to verify for any given palettes $\PP_1,\ldots,\PP_r$ and $\PP'_1,\ldots,\PP'_q$;
note that the condition becomes significantly simpler when $r=1$ (see Theorem~\ref{thm:single}).
We believe that the necessary and sufficient condition that we present has a great potential
for identifying hypergraphs with specific Tur\'an densities, and
as evidence to support this claim (in addition to the above mentioned results of King, Piga, Sales and Sch\"ulke),
we apply our results in Section~\ref{sec:example} to show
the existence of a hypergraph with uniform Tur\'an density equal to $4/81$,
which has not been previously known to exist.

We now briefly discuss how the paper is structured.
We introduce the notation needed throughout the paper in Section~\ref{sec:prelim} and
present several Ramsey-type results used in our arguments in Section~\ref{sec:Ramsey}.
In Section~\ref{sec:single}, we present the necessary and sufficient condition in the case $r=1$,
which is significantly simpler, both in terms of the condition and its proof.
Unlike in the general case of $r>1$,
the condition involves only the existence of a naturally defined homomorphism between palettes.
We believe that first presenting the case $r=1$ makes our exposition more accessible.
In Section~\ref{sec:multi}, we present our main results (Theorem~\ref{thm:multi} and Corollary~\ref{cor:multi}).
While the proof follows similar lines as that given in Section~\ref{sec:single},
the condition is significantly more involved (as an additional operation with a palette needs to be introduced,
which we address in more detail in the concluding Section~\ref{sec:concl}) and
the arguments are substantially more complex (as also witnessed by more advanced Ramsey type results employed);
this was also the reason why we decided to first present the arguments in a simpler setting when $r=1$.
In Section~\ref{sec:example}, we present a simple application of our necessary and sufficient condition that
yields the existence of a hypergraph with uniform Tur\'an density $4/81$, and
we conclude in Section~\ref{sec:concl} by discussing the identified necessary and sufficient condition, and
some of its additional future applications.

\section{Preliminaries}
\label{sec:prelim}

In this section,
we introduce notation used throughout the paper and also recall some well-known facts related to the graph regularity method.

We write $[n]$ for the set of the first $n$ positive integers.
If $H$ is a (hyper)graph, then $V(H)$ denotes the vertex set of $H$.
Throughout this paper, all hypergraphs that we consider are $3$-uniform and, as already mentioned in Introduction,
we drop the adjective $3$-uniform for brevity.
An \emph{$\ell$-edge-coloring} of a graph $H$ is an assignment of $\ell$ colors to the edges of $H$,
which needs not be proper, i.e. edges sharing the same vertex may have the same color.
Some of graphs and hypergraphs in our considerations are equipped with a linear order on their vertex and
we refer to them as \emph{ordered graphs} and \emph{ordered hypergraphs}.
We will carefully say \emph{ordered} (hyper)graph whenever the considered (hyper)graph is ordered,
i.e. the absence of the adjective always means that the considered (hyper)graph is not ordered.
If $H$ is an ordered (hyper)graph, we write $\preceq_H$ for the linear order on the vertex set of $H$;
we drop the subscript in $\preceq_H$ in case the choice of $H$ is obvious form the context.
If $H$ is an ordered (hyper)graph, then $\unorient{H}$ is the (hyper)graph without the linear order, and
if $H$ is a (hyper)graph and $\preceq$ a linear order on $V(H)$,
then $H^{\preceq}$ is the ordered (hyper)graph obtained from $H$ by equipping it with $\preceq$.
Finally, if $H$ is an ordered (hyper)graph,
then $\inv(H)$ is the ordered (hyper)graph with the reverse linear order.

We now introduce some notation related to colorability of (ordered) hypergraphs with palettes,
which is additional to that presented in Introduction.
Consider a palette $\PP=(C,T)$.
If $(x,y,z)\in T$ is a feasible triple,
we refer to the color $x$ as the \emph{left} color, $y$ as the \emph{middle} color and $z$ as the \emph{right} color.
We say that an ordered hypergraph $H$ is \emph{$\PP$-colorable}
if there exists a coloring of the pairs of the vertices of $H$ with colors from $C$ such that
for every edge $uvw$ of $H$ with $u\preceq_H v\preceq_H w$,
there exists a triple $(x,y,z)\in T$ such that
the color of $uv$ is $x$, the color of $uw$ is $y$ and the color $vw$ is $z$.
In particular, a hypergraph $H$ is \emph{$\PP$-colorable}
if there exists a linear order $\preceq$ on $V(H)$ such that $H^{\preceq}$ is is $\PP$-colorable.

A \emph{homomorphism} from a palette $\PP=(C,T)$ to a palette $\PP'=(C',T')$
is a mapping $f:C\to C'$ such that $(f(x),f(y),f(z))\in T'$ for every triple $(x,y,z)\in T$.
Observe that if there exists a homomorphism from a palette $\PP$ to a palette $\PP'$,
then every $\PP$-colorable ordered hypergraph is also $\PP'$-colorable.
Indeed, any coloring of pairs of vertices by the colors of $\PP$ that witnesses $\PP$-colorability
gives rise to a coloring of pairs of vertices by the colors of $\PP'$ (simply by their mapping through the homomorphism)
that witnesses $\PP'$-colorability.
We formulate this observation as a proposition for future reference.

\begin{proposition}
\label{prop:hom}
If there exists a homomorphism from a palette $\PP$ to a palette $\PP'$,
then every ordered $\PP$-colorable hypergraph is $\PP'$-colorable.
\end{proposition}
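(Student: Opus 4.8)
The plan is to transport a coloring that witnesses $\PP$-colorability through the given homomorphism, keeping the vertex order fixed. Fix a homomorphism $f:C\to C'$ from $\PP=(C,T)$ to $\PP'=(C',T')$, and let $H$ be an ordered hypergraph that is $\PP$-colorable.

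First I would unpack the hypothesis: by the definition of $\PP$-colorability there is a coloring $c$ assigning to each pair $uv$ of vertices of $H$ a color $c(uv)\in C$ such that for every edge $uvw$ of $H$ with $u\preceq_H v\preceq_H w$ we have $(c(uv),c(uw),c(vw))\in T$. Next I would define the candidate coloring for $\PP'$ by composition, setting $c'(uv):=f(c(uv))$ for each pair $uv$ of vertices of $H$; this is a coloring of the pairs of $V(H)$ by colors of $C'$.

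It then remains to check that $c'$ witnesses $\PP'$-colorability of the same ordered hypergraph $H$: for every edge $uvw$ with $u\preceq_H v\preceq_H w$, the triple $(c(uv),c(uw),c(vw))$ lies in $T$ by the choice of $c$, hence $(f(c(uv)),f(c(uw)),f(c(vw)))=(c'(uv),c'(uw),c'(vw))$ lies in $T'$ because $f$ is a homomorphism. This completes the verification, so $H$ is $\PP'$-colorable.

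There is essentially no obstacle here: the statement follows by directly unwinding the definitions of colorability and of a palette homomorphism. The only point to be mindful of is that the linear order $\preceq_H$ on $V(H)$ plays no role in the construction of $c'$ and is used unchanged for both palettes, so no reordering of vertices is needed and the ordered structure is preserved automatically.
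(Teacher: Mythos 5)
Your proof is correct and follows essentially the same route as the paper's one-line justification preceding the proposition: compose the coloring that witnesses $\PP$-colorability with the homomorphism and keep the vertex order fixed. You simply spell out the verification that the paper leaves to the reader.
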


We next define three operations with palettes, starting with the simplest of the three operations.
If $\PP=(C,T)$ is a palette,
then the \emph{inverse} palette, denoted by $\inv(\PP)$, is the palette $(C,T')$ such that
$(x,y,z)\in T'$ iff $(z,y,x)\in T$,
Observe that an ordered hypergraph $H$ is $\PP$-colorable if and only if $\inv(H)$ is $\inv(\PP)$-colorable.

The product of palettes $\PP_1=(C_1,T_1),\ldots,\PP_k=(C_k,T_k)$
is the palette $(C,T)$ such that $C=C_1\times\cdots\times C_k$ and
$((x_1,\ldots,x_k),(y_1,\ldots,y_k),(z_1,\ldots,z_k))\in T$ iff
$(x_i,y_i,z_i)\in T_i$ for every $i\in [k]$.
The product of palettes $\PP_1,\ldots,\PP_k$ is denoted by $\PP_1\times\cdots\times\PP_k$ or simply by $\prod\limits_{i\in [k]}\PP_i$.
Observe that an ordered hypergraph $H$ is $\PP_1$-colorable and $\PP_2$-colorable if and only if
$H$ is $\PP_1\times\PP_2$-colorable.
However,
a hypergraph $H$ that is $\PP_1$-colorable and $\PP_2$-colorable need not be $\PP_1\times\PP_2$-colorable---we give a simple example.
Consider a palette $\PP$ with two colors, $\alpha$ and $\beta$, and
two feasible triples $(\alpha,\beta,\alpha)$ and $(\alpha,\beta,\beta)$.
The hypergraph $K_4^{(3)-}$ is $\PP$-colorable and $\inv(\PP)$-colorable
but it is not $\PP\times\inv(\PP)$-colorable.
Indeed, the only color $(\beta,\beta)$ that appears as the middle in a feasible triple of the palette $\PP\times\inv(\PP)$,
however, the color $(\beta,\beta)$ is neither left color nor right color in any feasible trip.
It follows that for any vertex order of $K_4^{(3)-}$, say $v_1\preceq v_2\preceq v_3\preceq v_4$, and
any coloring of pairs of its vertices,
at most one of the triples $(v_1,v_2,v_3)$ and $(v_1,v_3,v_4)$ is feasible and
at most one of the triples $(v_1,v_2,v_4)$ and $(v_2,v_3,v_4)$ is feasible.

\begin{figure}
\begin{center}
\epsfbox{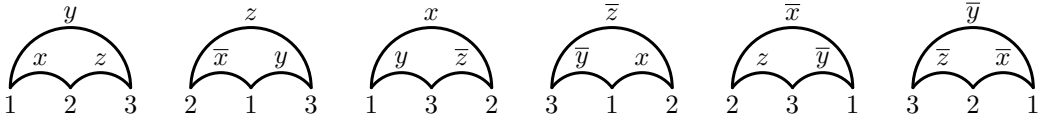}
\end{center}
\caption{Visualization of the triples included in $\sym{T}$ for a triple $(x,y,z)\in T$.}
\label{fig:sym}
\end{figure}

The last of the three operations that we define may look puzzling at the first sight.
Consider a palette $\PP=(C,T)$ such that $C=\{c_1,\ldots,c_k\}$.
The \emph{symmetrization} of the palette $\PP$, denoted by $\sym{\PP}$,
is the palette $(\sym{C},\sym{T})$ such that
\begin{align*}
\sym{C}  =&\{ c_1,\ldots,c_k,\symc{c_1},\ldots,\symc{c_k}\} \mbox{ and } \\
\sym{T}  =&\bigcup_{(x,y,z)\in T}\{(x,y,z),(\symc{x},z,y),(y,x,\symc{z}),(\symc{y},\symc{z},x), (z,\symc{x},\symc{y}),(\symc{z},\symc{y},\symc{x})\},
\end{align*}
where $\symc{c_1},\dots,\symc{c_k}$ are new colors. Informally speaking,
the palette $\sym{\PP}$ has two twin colors for each color of the palette $\PP$, the original color and its ``clone'', and
it contains six triples for each triple $(x,y,z)$ contained in $\PP$ that are obtained as follows (see Figure~\ref{fig:sym}):
orient the edges of an ordered triangle left to right and color the edge between the two left vertices $x$, the edge between the two extreme vertices $y$ and the edge between the two right vertices $z$ so that the colored triangle corresponds to the triple $(x,y,z)$.
Now reorder the vertices arbitrarily,
change the color of all edges where the direction of the edge does not agree with the order to the clone of its original color, and
add the resulting triple of colors to the family.
The six possible orders of the vertices yield the six triples in $\sym{\PP}$ resulting from each $(x,y,z)\in T$.

We next recall a version of Szemer\'edi Regularity Lemma for prepartitioned edge-colored graphs, and
present two related lemmas that we will use later.
To do so, we need to introduce some additional terminology.
We write $Z=X_1\dot\cup\cdots\dot\cup X_k$ for a \emph{partition} of a set $Z$,
i.e., a decomposition of $Z$ into disjoint subsets.
A partition is an \emph{equipartition} if the sizes of the subsets differ by at most one,
a partition $Y_1\dot\cup\cdots\dot\cup Y_{\ell}$ is a \emph{refinement} of a partition $X_1\dot\cup\cdots\dot\cup X_k$
if for every $i\in [\ell]$, there exists $j\in [k]$ such that $Y_i\subseteq X_j$, and
a partition $Y_1\dot\cup\cdots\dot\cup Y_{\ell}$ is an \emph{equirefinement} of a partition $X_1\dot\cup\cdots\dot\cup X_k$
if $Y_1\dot\cup\cdots\dot\cup Y_{\ell}$ is a refinement of $X_1\dot\cup\cdots\dot\cup X_k$ and
each part $X_i$, $i\in [k]$, is split into exactly $\ell/k$ parts.

We now state a version of Szemer\'edi Regularity Lemma for prepartitioned edge-colored graphs.

\begin{lemma}
\label{lm:SRL}
For every $\varepsilon>0$, $m\in\NN$, $\kappa\in\NN$ and $k_0\in\NN$,
there exists $K_0\in\NN$ such that
for every $m$-edge-colored complete graph $G$ and
every equipartition $U_1\dot\cup\cdots\dot\cup U_{\kappa}$ of $V(G)$,
there exists an equipartition $V_1\dot\cup\cdots\cup V_k$ that is a equirefinement of $U_1\dot\cup\cdots\dot\cup U_{\kappa}$ such that $k_0\le k\le K_0$ and
all but $\varepsilon\binom{k}{2}$ pairs $(V_i,V_j)$, $1\le i<j\le k$, are $\varepsilon$-regular,
i.e. they satisfy for every color $x$, every subset $A\subseteq V_i$, $|A|\ge\varepsilon |V_i|$, and every subset $B\subseteq V_j$, $|B|\ge\varepsilon |V_j|$ that
\[\left|\frac{e_x(V_i,V_j)}{|V_i|\;|V_j|}-\frac{e_x(A,B)}{|A|\;|B|}\right|\le\varepsilon,\]
where $e_x(X,Y)$ is the number of edges colored with $x$ between subsets $X$ and $Y$ of $V(G)$.
\end{lemma}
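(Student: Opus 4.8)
The plan is to run the energy-increment (index-increment) proof of the Szemer\'edi Regularity Lemma, tracking the relevant densities separately for each of the $m$ colors and summing them, and then adding a bookkeeping layer that keeps the partition an equirefinement of the given prepartition $U_1\dot\cup\cdots\dot\cup U_{\kappa}$ that is also an equipartition. For a partition $\mathcal{P}=V_1\dot\cup\cdots\dot\cup V_k$ of $V(G)$, write $n=|V(G)|$ and $d_x(V_i,V_j)=e_x(V_i,V_j)/(|V_i|\,|V_j|)$, and define the \emph{index}
\[
q(\mathcal{P})=\frac{1}{\binom{n}{2}}\ \sum_{x=1}^{m}\ \sum_{1\le i<j\le k}|V_i|\,|V_j|\,d_x(V_i,V_j)^2 .
\]
For every pair $(V_i,V_j)$ the color densities $d_1(V_i,V_j),\ldots,d_m(V_i,V_j)$ are nonnegative and sum to $1$, so $\sum_x d_x(V_i,V_j)^2\le 1$ and hence $0\le q(\mathcal{P})\le 1$ for every partition $\mathcal{P}$, uniformly in $m$; this boundedness is what makes the iteration terminate.

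I would prove the three standard ingredients, working color by color. \emph{(i) Monotonicity:} if $\mathcal{Q}$ refines $\mathcal{P}$ then $q(\mathcal{Q})\ge q(\mathcal{P})$, which is convexity of $t\mapsto t^2$ (equivalently Cauchy--Schwarz) applied to each color and each pair of classes, since the density between two unions is the weighted average of the densities between their pieces. \emph{(ii) One irregular pair gives an index gain:} if $(V_i,V_j)$ is not $\varepsilon$-regular, witnessed by a color $x$ and sets $A\subseteq V_i$, $B\subseteq V_j$, then cutting $V_i=A\,\dot\cup\,(V_i\setminus A)$ and $V_j=B\,\dot\cup\,(V_j\setminus B)$ and leaving all other classes alone raises the index by at least $\varepsilon^{4}|V_i|\,|V_j|/\binom{n}{2}$, by the defect form of Cauchy--Schwarz for the color $x$ (the other colors and untouched pairs not dropping, by (i)). \emph{(iii) Many irregular pairs give a uniform gain:} if more than $\varepsilon\binom{k}{2}$ pairs are not $\varepsilon$-regular, apply (ii) to all of them at once and take the common refinement $\mathcal{P}^{*}$; each class is split into at most $2^{k-1}$ pieces, and summing the gains of (ii) over those pairs (each of relative size $\approx 1/\binom{k}{2}$) gives $q(\mathcal{P}^{*})\ge q(\mathcal{P})+c(\varepsilon)$ for some $c(\varepsilon)>0$ depending only on $\varepsilon$.

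The iteration is then routine. For $n$ large enough, start from the equirefinement $\mathcal{P}_0$ of $U_1\dot\cup\cdots\dot\cup U_{\kappa}$ obtained by splitting each $U_i$ into $s=\lceil k_0/\kappa\rceil$ parts of nearly equal size; this is an equipartition into $\kappa s\ge k_0$ parts because the $|U_i|$ differ by at most $1$ and $s\ll n/\kappa$. While the current partition has more than $\varepsilon\binom{k}{2}$ irregular pairs, refine it by (iii) and then re-equalize it (next paragraph). Since $q$ lies in $[0,1]$, each non-terminal step raises it by at least $c(\varepsilon)$ and re-equalization does not lower it, so the process halts after at most $1/c(\varepsilon)$ steps; as each step multiplies the number of parts by at most a bounded factor, the final $k$ is bounded by some $K_0=K_0(\varepsilon,\kappa,k_0)$ (a tower-type bound, in fact not depending on $m$), while $k\ge k_0$ holds throughout, and the terminal partition has all but at most $\varepsilon\binom{k}{2}$ pairs $\varepsilon$-regular.

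The one genuinely delicate step, and the main obstacle, is the re-equalization: the refinement $\mathcal{P}^{*}$ from (iii) is uneven and need not cut the prepartition classes $U_1,\ldots,U_{\kappa}$ into equally many pieces, so a priori it is neither an equipartition nor an equirefinement. I would handle this by the familiar device of carrying a small exceptional reservoir and using slack: run the argument with $\varepsilon$ replaced by $\varepsilon/C$ for an absolute constant $C$; after each refinement, first add dummy cuts so that every $U_i$ is split into the same number of classes, then re-slice all classes into intervals of a common size, collecting the $o(n)$ rounding leftovers into the reservoir and redistributing them (respecting the prepartition) at the end. By (i) the dummy cuts and re-slicing do not decrease $q$; the dummy cuts enlarge the number of parts by at most a further $\kappa$-fold, so the blow-up per step stays bounded; and moving an $o(n)$-fraction of the vertices changes $q$ and the number of irregular pairs negligibly, which the $C$-slack absorbs (using that $\varepsilon$-regularity is stable under perturbing a sub-$\varepsilon$ fraction of the vertices). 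Checking these three estimates is the only real work; once they are in place, the terminal partition of the iteration is an equipartition, an equirefinement of $U_1\dot\cup\cdots\dot\cup U_{\kappa}$, and $\varepsilon$-regular in the stated sense, which completes the proof.
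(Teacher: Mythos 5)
The paper does not prove Lemma~\ref{lm:SRL} at all: it is stated as a known variant of the Szemer\'edi Regularity Lemma for edge-colored graphs with a prescribed initial equipartition, and the authors simply invoke it. Your proposal supplies the standard energy-increment proof, and it is essentially correct: the color-summed index is bounded by $1$ uniformly in $m$ because the $m$ color densities of each pair sum to $1$; one witnessing pair $(A,B)$ (for some single color) per irregular pair suffices, so the defect Cauchy--Schwarz gain and the $2^{k-1}$-fold splitting bound are both independent of $m$; and the iteration terminates in $O(\varepsilon^{-5})$ steps with a tower-type $K_0$. You also correctly isolate the only genuinely delicate point, namely re-equalizing after each refinement while keeping the partition an equirefinement of $U_1\dot\cup\cdots\dot\cup U_\kappa$; it is worth noting explicitly that this is possible exactly because the $U_i$ themselves form an equipartition, so each $U_i$ can be cut into the same number $s$ of parts all of size $\lfloor n/(\kappa s)\rfloor$ or $\lceil n/(\kappa s)\rceil$, and the leftover vertices can be redistributed within their own $U_i$; the stability of $\varepsilon$-regularity under moving an $o(1)$-fraction of vertices then absorbs the damage, as you say. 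Modulo the usual degenerate small-$n$ caveats, your outline is a valid proof of the statement the paper takes for granted.
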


The proof of the next lemma follows from standard regularity method arguments, and we omit it.

\begin{lemma}
\label{lm:triangle}
Let $G$ be a graph and $\varepsilon\in (0,1/2)$.
Further let $V_1$, $V_2$ and $V_3$ be disjoint subsets of $V(G)$ such that each pair of them is $\varepsilon$-regular,
i.e. they satisfy for every $1\le i<j\le3$, for every subset $A\subseteq V_i$, $|A|\ge\varepsilon |V_i|$, and every subset $B\subseteq V_j$, $|B|\ge\varepsilon |V_j|$, that
\[\left|\frac{e(V_i,V_j)}{|V_i|\;|V_j|}-\frac{e(A,B)}{|A|\;|B|}\right|\le\varepsilon,\]
where $e(X,Y)$ is the number of edges between subsets $X$ and $Y$ of $V(G)$.
If $\frac{e(V_i,V_j)}{|V_i|\;|V_j|}\ge 2\varepsilon$ for all $1\le i<j\le 3$,
then there exist mutually adjacent vertices $v_1\in V_1$, $v_2\in V_2$ and $v_3\in V_3$,
i.e. $v_1v_2v_3$ is a triangle in $G$.
\end{lemma}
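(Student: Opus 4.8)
The plan is to run the standard triangle counting argument for $\varepsilon$-regular pairs. First I would isolate the auxiliary "most vertices have large degree" fact: in an $\varepsilon$-regular pair $(V_i,V_j)$ with density $d_{ij}:=e(V_i,V_j)/(|V_i|\,|V_j|)$, at most $\varepsilon|V_i|$ vertices of $V_i$ have fewer than $(d_{ij}-\varepsilon)|V_j|$ neighbours in $V_j$. This is immediate: letting $A\subseteq V_i$ be the set of such low-degree vertices, if $|A|\ge\varepsilon|V_i|$ then $e(A,V_j)<(d_{ij}-\varepsilon)|A|\,|V_j|$, i.e. $e(A,V_j)/(|A|\,|V_j|)<d_{ij}-\varepsilon$, contradicting $\varepsilon$-regularity of $(V_i,V_j)$.

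Next I would apply this fact to the pairs $(V_1,V_2)$ and $(V_1,V_3)$. At most $\varepsilon|V_1|$ vertices of $V_1$ have fewer than $(d_{12}-\varepsilon)|V_2|$ neighbours in $V_2$, and at most $\varepsilon|V_1|$ vertices of $V_1$ have fewer than $(d_{13}-\varepsilon)|V_3|$ neighbours in $V_3$. Since $\varepsilon<1/2$, we have $2\varepsilon|V_1|<|V_1|$, so some vertex $v_1\in V_1$ avoids both bad sets. Writing $A$ for the neighbourhood of $v_1$ inside $V_2$ and $B$ for its neighbourhood inside $V_3$, the density hypothesis $d_{12},d_{13}\ge 2\varepsilon$ gives $|A|\ge(d_{12}-\varepsilon)|V_2|\ge\varepsilon|V_2|$ and $|B|\ge(d_{13}-\varepsilon)|V_3|\ge\varepsilon|V_3|$.

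Finally, because $|A|\ge\varepsilon|V_2|$ and $|B|\ge\varepsilon|V_3|$, the $\varepsilon$-regularity of $(V_2,V_3)$ yields $e(A,B)/(|A|\,|B|)\ge d_{23}-\varepsilon\ge 2\varepsilon-\varepsilon=\varepsilon>0$, so there is an edge $v_2v_3$ of $G$ with $v_2\in A\subseteq V_2$ and $v_3\in B\subseteq V_3$. As $v_1$ is adjacent to both $v_2$ and $v_3$, the triple $v_1v_2v_3$ is a triangle, which is what we wanted. I do not anticipate a genuine obstacle here; the only points deserving care are verifying that the threshold $2\varepsilon$ on the densities is exactly what keeps $A$ and $B$ large enough to re-enter the regularity condition (one needs $d_{ij}-\varepsilon\ge\varepsilon$), and that the hypothesis $\varepsilon<1/2$ is used precisely to ensure the two bad subsets of $V_1$ do not together cover $V_1$.
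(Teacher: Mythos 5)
Your proof is correct and is precisely the standard triangle-embedding argument (find a typical vertex $v_1\in V_1$ whose neighbourhoods in $V_2$ and $V_3$ are both of size at least $\varepsilon|V_i|$, then apply $\varepsilon$-regularity of $(V_2,V_3)$ to those neighbourhoods). The paper omits the proof of this lemma, stating only that it "follows from standard regularity method arguments," and your argument is exactly the one intended.
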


We finish this section with stating and proving an auxiliary lemma,
which permits choosing representative parts with respect to a given initial equipartition
such that all the pairs of the representative parts are $\varepsilon$-regular.

\begin{lemma}
\label{lm:clique}
Let $\kappa\in\NN$ and let $G$ be an edge-colored complete graph with an equipartition $U_1\dot\cup\cdots\dot\cup U_{\kappa}$ of $V(G)$.
If an equipartition $V_1\dot\cup\cdots\cup V_k$ is a equirefinement of $U_1\dot\cup\cdots\dot\cup U_{\kappa}$ such that
all but $\varepsilon\binom{k}{2}$ pairs $(V_i,V_j)$, $1\le i<j\le k$, are $\varepsilon$-regular for some $\varepsilon\le\frac{2}{\kappa^2}$,
then there exist $j_1,\ldots,j_{\kappa}\in [k]$ such that $V_{j_i}\subseteq U_i$ for every $i\in [\kappa]$, and
every pair $(V_{j_i},V_{j_{i'}})$, $1\le i<i'\le\kappa$, is $\varepsilon$-regular.
\end{lemma}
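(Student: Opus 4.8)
The plan is to prove Lemma~\ref{lm:clique} by a short first-moment (averaging) argument over random transversals of the refinement, viewing the desired parts as a ``clique'' in the \emph{regularity graph} that meets each block $U_i$ exactly once. For $i\in[\kappa]$, let $B_i\subseteq[k]$ be the set of indices $j$ with $V_j\subseteq U_i$; since $V_1\dot\cup\cdots\dot\cup V_k$ is an equirefinement of $U_1\dot\cup\cdots\dot\cup U_{\kappa}$, each $B_i$ has exactly $m:=k/\kappa$ elements, and $m\ge 1$. Call an unordered pair $\{a,b\}$ with $a\ne b$ \emph{bad} if $(V_a,V_b)$ is not $\varepsilon$-regular; by hypothesis there are at most $\varepsilon\binom{k}{2}$ bad pairs. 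Choosing, for each $i\in[\kappa]$, an index $j_i\in B_i$ such that no pair $\{j_i,j_{i'}\}$ with $i\ne i'$ is bad is exactly the conclusion we want: the containment $V_{j_i}\subseteq U_i$ is automatic from $j_i\in B_i$, and every pair $(V_{j_i},V_{j_{i'}})$ with $i<i'$ is then $\varepsilon$-regular.

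First I would record the numerical estimate that makes the averaging work. Since $\varepsilon\le 2/\kappa^2$,
\[
\varepsilon\binom{k}{2}\le\frac{2}{\kappa^2}\cdot\frac{k(k-1)}{2}=\frac{k^2-k}{\kappa^2}=m^2-\frac{m}{\kappa}<m^2 .
\]
Next, pick indices $J_1,\ldots,J_{\kappa}$ independently and uniformly at random with $J_i\in B_i$. A bad pair with both endpoints inside the same block $B_i$ can never be selected, because only one index is chosen from each $B_i$. For a bad pair $\{a,b\}$ with $a\in B_i$ and $b\in B_{i'}$ for distinct $i,i'$, the probability that $\{a,b\}=\{J_i,J_{i'}\}$ equals $1/m^2$ by independence. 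Hence the expected number of bad pairs among $\{\{J_i,J_{i'}\}:1\le i<i'\le\kappa\}$ is at most $\varepsilon\binom{k}{2}\cdot m^{-2}<1$, so some outcome $j_1,\ldots,j_{\kappa}$ selects no bad pair; the corresponding parts $V_{j_1},\ldots,V_{j_{\kappa}}$ are as required.

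There is essentially no hard step here: the only point that needs care is the arithmetic tying $\varepsilon$, $k$ and $\kappa$ together---in particular that $k$ is divisible by $\kappa$ with quotient $m\ge 1$, which is guaranteed by working with an equirefinement---so that the expected count of bad selected pairs is strictly below $1$. (If a deterministic phrasing is preferred, the same computation shows that the sum of $1/m^2$ over all bad cross-block pairs is less than $1$, so a suitable transversal can instead be built greedily, one block at a time; the probabilistic wording above is merely the shortest.)
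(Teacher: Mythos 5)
Your proof is correct and uses essentially the same argument as the paper: choose one index independently and uniformly at random from each block of the equirefinement, observe that any fixed cross-block pair is selected with probability $(\kappa/k)^2 = 1/m^2$, and apply a union bound / first moment over the at most $\varepsilon\binom{k}{2}$ bad pairs, with the hypothesis $\varepsilon\le 2/\kappa^2$ giving expected bad count strictly below $1$. The only cosmetic differences are your explicit naming of the blocks $B_i$ and the remark that intra-block bad pairs are never selected, which the paper leaves implicit.
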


\begin{proof}
Pick $j_i$ uniformly at random from among the $k/\kappa$ indices with $V_{j_i}\subseteq U_i$ independently for all $i\in[\kappa]$.
Any pair of distinct parts $\{V_j,V_{j'}\}$ appears among the $k$ sets selected with probability $(\kappa/k)^2$, and
so the union bound yields that
the probability that one of the at most $\varepsilon\binom k2$ non-$\varepsilon$-regular pairs appears
is at most $\varepsilon\binom{k}{2}(\kappa/k)^2<\varepsilon\kappa^2/2\le1$.
We conclude that that all pairs formed by the parts $V_{j_i}$ are $\varepsilon$-regular with positive probability.
\end{proof}

\section{Ramsey type results}
\label{sec:Ramsey}

In this section, we present results from Ramsey theory that we use in our arguments.
We start with two classical results,
the Erd\H os-Szekeres Theorem and a version of Ramsey's Theorem.

\begin{theorem}[{Erd\H os-Szekeres Theorem~\cite{ErdS35}}]
\label{thm:ES}
Let $k$ and $\ell$ be two integers.
Any sequence of $(k-1)(\ell-1)+1$ distinct reals
contains an increasing subsequence of length $k$ or a decreasing subsequence of length $\ell$.
\end{theorem}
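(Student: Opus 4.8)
The plan is to use the classical pigeonhole argument. Write the sequence as $a_1,\dots,a_n$ with $n=(k-1)(\ell-1)+1$, and for each index $i$ let $x_i$ be the length of a longest increasing subsequence \emph{ending} at $a_i$ and $y_i$ the length of a longest decreasing subsequence ending at $a_i$. The crucial observation is that the map $i\mapsto(x_i,y_i)$ is injective. To verify this, I would take any $i<j$; since the reals are distinct we have either $a_i<a_j$ or $a_i>a_j$. In the first case, prepending a longest increasing subsequence ending at $a_i$ to $a_j$ gives an increasing subsequence ending at $a_j$ of length $x_i+1$, so $x_j>x_i$; in the second case the symmetric argument gives $y_j>y_i$. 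In either case $(x_i,y_i)\neq(x_j,y_j)$.

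With injectivity in hand, I would argue by contradiction: assume the sequence has no increasing subsequence of length $k$ and no decreasing subsequence of length $\ell$. Then $1\le x_i\le k-1$ and $1\le y_i\le \ell-1$ for every $i$, so every pair $(x_i,y_i)$ belongs to a set of cardinality $(k-1)(\ell-1)$. But the $n=(k-1)(\ell-1)+1$ pairs are pairwise distinct, which is impossible. Hence one of the two desired monotone subsequences must exist, proving the theorem.

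There is no real obstacle here; the only step needing a moment of care is the injectivity claim, and specifically the observation that the case split $a_i<a_j$ versus $a_i>a_j$ is exhaustive precisely because the reals are distinct (this is exactly where that hypothesis is used). The degenerate cases $k=1$ or $\ell=1$ need no separate treatment, since then the bound is $n\ge 1$ and a monotone subsequence of the required length is trivially present. (An alternative route is to phrase this as an application of Mirsky's/Dilworth's theorem to the partial order on indices given by $i\prec j$ iff $i<j$ and $a_i<a_j$, but the direct pigeonhole argument above is shorter and self-contained.)
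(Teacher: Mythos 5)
Your proof is correct: it is the standard pigeonhole (Seidenberg-style) argument, the injectivity of $i\mapsto(x_i,y_i)$ is justified exactly where the distinctness hypothesis is needed, and the degenerate cases are handled. The paper itself states this as a classical cited result (Theorem~\ref{thm:ES}) and gives no proof, so there is nothing to compare against; your argument is a complete and self-contained justification.
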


\begin{theorem}[{Ramsey's Theorem~\cite{Ram30}, multicolor hypergraph version}]
\label{thm:Ramsey}
For every $n\in\NN$, $\ell\in\NN$, and $r\in\NN$
there exists $R\in\NN$ such that
for any $\ell$-coloring of the $r$-element subsets of $[R]$ there exists a monochromatic subset of size $n$, that is, $S\subseteq[R]$ with $|S|=n$ and all $r$-element subsets of $S$ having the same color.
\end{theorem}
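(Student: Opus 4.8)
The plan is to establish Theorem~\ref{thm:Ramsey} by induction on the uniformity $r$, with the multicolor pigeonhole principle serving as the base case. For $r=1$, an $\ell$-coloring of the singletons of $[R]$ is simply an $\ell$-coloring of $[R]$, and $R=\ell(n-1)+1$ forces one color class of size at least $n$, which is the desired monochromatic set.

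For the inductive step (so $r\ge 2$), I would fix the value $t:=R(n,\ell,r-1)+1$ supplied by the induction hypothesis for uniformity $r-1$, writing $R(\cdot,\cdot,\cdot)$ for the quantity whose existence is being proved. Given an $\ell$-coloring $\chi$ of the $r$-element subsets of $[R]$ for a suitably large $R$, I would greedily build a sequence of distinct elements $a_1,\dots,a_t$ together with a nested chain of ``pools'' $[R]=A_0\supseteq A_1\supseteq\dots\supseteq A_{t-1}$ with $a_i\in A_{i-1}$ and $A_i\subseteq A_{i-1}\setminus\{a_i\}$. The invariant to maintain is that for every $(r-1)$-element subset $X$ of $\{a_1,\dots,a_i\}$, the color $\chi(X\cup\{b\})$ is the same for all $b\in A_i$. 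To obtain $A_i$ from $A_{i-1}$ after selecting $a_i$, it suffices to make $b\mapsto\chi(X'\cup\{a_i,b\})$ constant for each of the at most $\binom{i-1}{r-2}$ sets $X'\subseteq\{a_1,\dots,a_{i-1}\}$ of size $r-2$; iterating the pigeonhole principle over these sets retains a fraction at least $\ell^{-\binom{i-1}{r-2}}$ of $A_{i-1}\setminus\{a_i\}$. Hence the pools shrink by a controlled (if rapidly growing) factor at each of the $t$ steps, so choosing $R$ to be a sufficiently large iterated exponential in $n$, $\ell$, $r$ guarantees $|A_{t-1}|\ge 1$ and the construction goes through.

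The invariant forces the color $\chi(\{a_{i_1},\dots,a_{i_r}\})$ with $i_1<\dots<i_r$ to depend only on $i_1,\dots,i_{r-1}$, since $a_{i_r}$ lies in the pool $A_{i_{r-1}}$ on which $\chi$ is constant once the first $r-1$ elements are fixed. Consequently $\chi'(\{i_1,\dots,i_{r-1}\}):=\chi(\{a_{i_1},\dots,a_{i_{r-1}},a_t\})$ is a well-defined $\ell$-coloring of the $(r-1)$-element subsets of $[t-1]=[R(n,\ell,r-1)]$. Applying the induction hypothesis to $\chi'$ yields $I\subseteq[t-1]$ with $|I|=n$ all of whose $(r-1)$-subsets receive the same $\chi'$-color, and then $\{a_i:i\in I\}$ is monochromatic for $\chi$: every $r$-subset $\{a_{i_1},\dots,a_{i_r}\}$ of it (with $i_1<\dots<i_r$, all at most $t-1$) has color $\chi'(\{i_1,\dots,i_{r-1}\})$, which is constant over $I$. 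An alternative route would be to prove the two-color case ($\ell=2$) of the induction step and then reduce $\ell$ colors to two by merging all colors but one, but the direct multicolor version above avoids that extra layer.

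The only point requiring genuine care is the bookkeeping for the pool sizes: one must verify that unwinding the recursion $|A_i|\ge(|A_{i-1}|-1)\,\ell^{-\binom{i-1}{r-2}}$ over all $t$ steps still leaves a nonempty set, which pins $R$ down as a tower-type but finite function of the parameters. Everything else---that $\chi'$ is well defined and that monochromaticity transfers back to $\{a_i:i\in I\}$---is a routine consequence of the invariant.
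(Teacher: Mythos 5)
The paper does not prove Theorem~\ref{thm:Ramsey}; it is cited as a classical result from Ramsey's 1930 paper and used as a black box, so there is no paper proof to compare against. Your argument is the standard inductive proof of hypergraph Ramsey (induction on the uniformity $r$, pigeonhole as the base case $r=1$, and the greedy pool-shrinking construction for the inductive step), and it is correct: the invariant is maintained because the $(r-1)$-subsets of $\{a_1,\dots,a_i\}$ avoiding $a_i$ were already handled at the previous stage and only those of the form $X'\cup\{a_i\}$ with $|X'|=r-2$ need fresh pigeonholing, the color of $\{a_{i_1},\dots,a_{i_r}\}$ then depends only on $i_1,\dots,i_{r-1}$ since $a_{i_r},a_t\in A_{i_{r-1}}$, and applying the induction hypothesis to the derived $(r-1)$-uniform coloring $\chi'$ on $[t-1]$ finishes the argument. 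The only thing to pin down in a full write-up is the explicit tower-type lower bound on $R$ ensuring $|A_{t-1}|\ge 1$ after unwinding the recursion, which you correctly flag as routine bookkeeping.
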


We believe that the next theorem may be known but we have not identified a proper reference.
We refer to Figure~\ref{fig:grid} for the illustration of the statement.
Recall that $\sgn$ is the function such that
$\sgn(x)=+1$ if $x>0$, $\sgn(x)=0$ for $x=0$, and $\sgn(x)=-1$, otherwise.

\begin{figure}
\begin{center}
\epsfbox{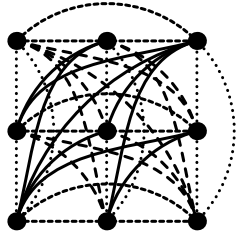}
\end{center}
\caption{Visualization of a sought ``subgrid'' in the statement of Theorem~\ref{thm:grid} for $n=3$ and $r=2$.
         The colors of pairs of points are depicted by (four) different types of lines.}
\label{fig:grid}
\end{figure}

\begin{theorem}
\label{thm:grid}
For every $n\in\NN$, $r\in\NN$ and $\ell\in\NN$,
there exists $R\in\NN$ such that the following holds.
For any function $F:[R]^r\times [R]^r\to [\ell]$,
there exist $n$-element sets $J_1,\ldots,J_r\subseteq [R]$ and
a function $C:\{-1,0,1\}^r\to [\ell]$ such that
\[F(x,y)=C\left(\sgn(y_1-x_1),\ldots,\sgn(y_r-x_r)\right).\]
for any two $r$-tuples $x=(x_1,\ldots,x_r)$ and $y=(y_1,\ldots,y_r)$ contained in $J_1\times\cdots\times J_r$
\end{theorem}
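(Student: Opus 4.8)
The plan is to prove this by induction on $r$, using Theorem~\ref{thm:Ramsey} (multicolor hypergraph Ramsey) together with Theorem~\ref{thm:ES} (Erd\H os--Szekeres) as the engine for controlling the sign in a single coordinate. For the base case $r=1$, we want to find an $n$-element set $J_1 \subseteq [R]$ and a function $C:\{-1,0,1\}\to[\ell]$ so that $F(x,y)$ depends only on $\sgn(y-x)$ for $x,y\in J_1$. Note that $F$ restricted to the diagonal, $x\mapsto F(x,x)$, takes some value that is constant on a large subset by pigeonhole (this fixes $C(0)$); more substantively, color each ordered pair $(x,y)$ with $x<y$ by the pair $(F(x,y),F(y,x))\in[\ell]^2$. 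This is a $2$-coloring-type assignment of the $2$-element subsets of $[R]$ (recording order), so a suitable Ramsey argument — color the $2$-subset $\{x,y\}$ with $x<y$ by $(F(x,y),F(y,x))$ — yields a homogeneous set on which $F(x,y)$ depends only on whether $x<y$ or $x>y$; intersecting with the diagonal-pigeonhole set gives $J_1$ and $C$.

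For the inductive step, suppose the result holds for $r-1$ with some bound, and we are given $F:[R]^r\times[R]^r\to[\ell]$ for $R$ large. The idea is to peel off the last coordinate. First restrict attention to $r$-tuples whose last coordinate lies in a fixed large "grid direction": think of the last coordinates as living in $[R]$ and the first $r-1$ coordinates as living in $[R]^{r-1}$. Define an auxiliary coloring on pairs of last-coordinate values that records, for each pair $(s,t)$ of possible last coordinates with $s<t$ (and also $s=t$, handled separately), the entire function $(x',y')\mapsto F((x',s),(y',t))$ on $[R]^{r-1}\times[R]^{r-1}$ — but this has too many colors. The standard fix is to iterate: for a large enough $[R]$, first apply the inductive hypothesis "uniformly" — more precisely, apply Ramsey's theorem to partition the last coordinate into a homogeneous set so that the behavior in the first $r-1$ coordinates is consistent across choices of $s,t$; then apply the $r-1$ case to the first $r-1$ coordinates. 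Concretely: choose $R$ huge; by Theorem~\ref{thm:Ramsey} applied to $2$-subsets $\{s,t\}$ of $[R]$ (the last-coordinate index set), colored by which of finitely many "types" the induced map on $[R']^{r-1}\times[R']^{r-1}$ has after applying the $(r-1)$-dimensional result with parameter $R'$, extract a large homogeneous set $J_r$ of last coordinates; then run the $(r-1)$-dimensional theorem once to get $J_1,\dots,J_{r-1}$ and a function on $\{-1,0,1\}^{r-1}$, and combine with the three cases "$y_r<x_r$", "$y_r=x_r$", "$y_r>x_r$" coming from homogeneity of $J_r$ to assemble $C:\{-1,0,1\}^r\to[\ell]$.

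The main obstacle is bookkeeping the quantifiers so that the number of colors in each Ramsey application stays finite and independent of the large ambient $R$: the coloring used to extract the homogeneous last-coordinate set $J_r$ must have a bounded palette, which forces us to first fix the size $R'$ used in the $(r-1)$-dimensional application and color each pair $\{s,t\}$ by the (finitely many) possible outputs $(J_1,\dots,J_{r-1},C')$ of that application — or, more cleanly, to color by the restriction of $F$ to a fixed reference copy $[R']^{r-1}$ sitting inside $[R]^{r-1}$, which is a function into $[\ell]^{(R')^{2(r-1)}}$, a finite set. One must also separately handle the "equal last coordinate" slices (giving $C(\cdot,0)$) and check that the Erd\H os--Szekeres / diagonal pigeonhole steps from the base case are correctly threaded through so that on $J_r$ the value $F((x',s),(y',s))$ for $s\in J_r$ is governed purely by the first $r-1$ coordinates. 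None of these steps is deep, but getting the order of quantifiers right — choose $n$, then $R'$ from the inductive bound, then the Ramsey number $R$ from $R'$ and the (finite) number of colors — is where care is needed.
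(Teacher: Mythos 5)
Your plan is correct, but it takes a genuinely different route from the paper. The paper proves Theorem~\ref{thm:grid} in a single shot: it applies Ramsey's Theorem once to the $2r$-element subsets of $[R]$ with $\ell^{3^r}$ colors, where the color of a $2r$-subset $\{x_1<\cdots<x_{2r}\}$ is the entire function $\{-1,0,1\}^r\to[\ell]$ obtained by evaluating $F$ on pairs of $r$-tuples assembled from the designated pair $(x_{2k-1},x_{2k})$ in coordinate $k$ (taking them in increasing order, decreasing order, or repeating one element according to the sign $z_k$); a monochromatic set of size $r(n+1)$ is then cut into $r$ blocks of $n$ consecutive elements (with one element skipped between blocks to keep the diagonal cases consistent), and these blocks are the $J_k$. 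Your proof instead inducts on $r$, peeling off the last coordinate with graph Ramsey plus a pigeonhole on the diagonal slices, and you correctly identify and resolve the one real danger of that route: the coloring of pairs $\{s,t\}$ of last coordinates must have a palette independent of $R$, which you achieve by fixing the reference copy $[R']^{r-1}$ (with $R'$ the $(r-1)$-dimensional bound) \emph{before} choosing the Ramsey number for the last coordinate. Two small points to tighten when writing this up: in the base case, nest the diagonal pigeonhole and the Ramsey step (apply one inside the other) rather than intersecting two independently obtained large sets, since the intersection need not be large; and in the inductive step, to get common sets $J_1,\ldots,J_{r-1}$ for the three last-coordinate sign classes, apply the $(r-1)$-dimensional theorem once to the combined function $(F_+,F_0,F_-):[R']^{r-1}\times[R']^{r-1}\to[\ell]^3\cong[\ell^3]$. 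The Erd\H os--Szekeres theorem you invoke is not actually needed in either argument. The paper's encoding is shorter and avoids the compounding of bounds through an induction; your version is more modular and makes the role of each coordinate transparent, at the cost of heavier quantifier bookkeeping.
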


\begin{proof}
Apply Ramsey's Theorem (Theorem~\ref{thm:Ramsey})
with $r(n+1)$ in place of $n$, $2r$ in place of $r$ and $\ell^{3^r}$ in place of $\ell$ to get $R$.
Consider a function $F:[R]^r\times [R]^r\to [\ell]$.
We now color $(2r)$-element subsets of $[R]$ with $\ell^{3^r}$ colors;
we identify the colors with functions from $\{-1,0,1\}$ to $[\ell]$, and
a $(2r)$-element subset $\{x_1,\ldots,x_{2r}\}$, $x_1<\cdots<x_{2r}$,
is colored with the following function $C:\{-1,0,1\}^r\to [\ell]$:
\[C(z_1,\ldots,z_r)=F\left((x_{i_1},x_{i_2},\ldots,x_{i_r}),(x_{j_1},x_{j_2},\ldots,x_{{j_r}})\right)\]
where $(i_k,j_k)=(2k-1,2k)$ if $z_k=+1$, $(i_k,j_k)=(2k-1,2k-1)$ if $z_k=0$, and $(i_k,j_k)=(2k,2k-1)$ otherwise.
By the choice of $R$,
there exists a subset $S\subseteq [R]$ such that
all $(2r)$-element subsets of $S$ have the same color $C$;
this $C$ will be the function from the statement of the theorem.
Let $x_1,\ldots,x_{r(n+1)}$ be the elements of $S$ listed in the increasing order.
Set $J_1$ to be the set containing the elements $x_1,\ldots,x_n$,
$J_2$ the set containing the elements $x_{n+2},\ldots,x_{2n+1}$,
$J_3$ the set containing the elements $x_{2n+3},\ldots,x_{3n+2}$, and so forth (one element always needs to be skipped
because all $r$-tuples with equal coordinates will not be constrained otherwise).
It is easy to verify that the sets $J_1,\ldots,J_r$ have the properties given in the statement of the theorem.
\end{proof}

We conclude this section with the following theorem, which was proven by Fishburn and Graham~\cite{FisG93};
we remark that the best known quantitative bound can be found in~\cite{BucST23}.
Informally speaking,
the theorem asserts that for every linear order $\preceq$ of a sufficiently large grid $[R]^r$,
there exists a subgrid such that
the linear order $\preceq$ is the lexicographic order of the $r$-tuples
after a suitable permutation of the coordinates and
possibly reversing the linear orders in some of the coordinates.

\begin{theorem}
\label{thm:ES-grid}
For every $n\in\NN$ and $r\in\NN$,
there exists $R\in\NN$ such that the following holds for any linear order $\preceq$ on $[R]^r$.
There exist $n$-element sets $J_1,\ldots,J_r\subseteq [R]$,
a permutation $\pi$ of order $r$, and a function $s:[r]\to\{-1,+1\}$ such that
the linear order $\preceq$ of the $r$-tuples $(x_1,\ldots,x_r)\in J_1\times\cdots\times J_r$
is the lexicographic order given by $(s_1x_{\pi(1)},\ldots,s_rx_{\pi(r)})$.
\end{theorem}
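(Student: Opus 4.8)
The plan is to pass, via the Ramsey-type results above, to a large subgrid on which the order depends only on the coordinatewise sign pattern of the difference of two points, and then to argue that such an order must be lexicographic.

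\emph{Reduction to sign-pattern orders, and normalization.} First I would apply Theorem~\ref{thm:grid} with $\ell=3$ to the function $F$ recording, for $(x,y)\in[R]^r\times[R]^r$, whether $x\prec y$, $x=y$, or $y\prec x$; this produces $n'$-element sets $J_1,\dots,J_r$ (with $n'$ fixed later) and a function $C$ on $\{-1,0,1\}^r$ such that, for $x,y$ in the grid $J_1\times\cdots\times J_r$, one has $x\prec y$ exactly when $C(\sgn(y_1-x_1),\dots,\sgn(y_r-x_r))$ equals the value coding ``$\prec$''. Antisymmetry of $\preceq$ forces $C(-u)$ to code ``$\succ$'' whenever $C(u)$ codes ``$\prec$'' (for $u\neq 0$), and transitivity forces: for grid points $x,y,z$, if $C(\sgn(y-x))$ and $C(\sgn(z-y))$ both code ``$\prec$'' then so does $C(\sgn(z-x))$. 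Next, for each $i$ the value of $C$ on the pattern with a single $+1$ in coordinate $i$ codes ``$\prec$'' or ``$\succ$''; recording this as a sign $s_i$ and reversing the linear order of $J_i$ whenever $s_i=-1$, I may assume increasing any single coordinate moves up in $\prec$; in particular $C$ then codes ``$\prec$'' on every nonzero nonnegative pattern (chain of single-coordinate increases). I keep the $s_i$ to undo this reversal at the end.

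\emph{The domination tournament.} For distinct $i,j$ let $e_{ij}\in\{-1,0,1\}^r$ have $+1$ in coordinate $i$, $-1$ in coordinate $j$, and $0$ elsewhere; by antisymmetry exactly one of $C(e_{ij}),C(e_{ji})$ codes ``$\prec$'', and I orient $i\to j$ in that case, obtaining a tournament on $[r]$. This tournament is transitive: if $i\to j\to k\to i$, then modifying a fixed grid point in coordinates $(i,j,k)$ to $(h,\ell,\ell),(\ell,h,\ell),(\ell,\ell,h)$ for two values $\ell<h$ gives three grid points cycling under $\prec$, contradicting transitivity. Let $\pi$ list $[r]$ from the source to the sink of this tournament, and relabel coordinates so that $\pi=\mathrm{id}$; then $C(e_{ij})$ codes ``$\prec$'' whenever $i<j$.

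\emph{The order is lexicographic.} I claim, by induction on $r$, that a sign-normalized sign-pattern order on a grid all of whose side lengths are at least $r+1$ is the lexicographic order induced by $(x_1,\dots,x_r)$. The case $r=1$ is clear. For $r\ge2$, the restriction of $\prec$ to any layer (coordinate $1$ fixed) is again a sign-normalized sign-pattern order on an $(r-1)$-dimensional grid, governed by the same $C$ and the same (restricted) tournament; by induction it is lexicographic in $(x_2,\dots,x_r)$, uniformly across all layers. Since within a layer the coordinatewise-minimum point is the $\prec$-smallest and the coordinatewise-maximum is the $\prec$-largest (again chains of single-coordinate moves), it remains only to check the layers are stacked in increasing order of coordinate $1$, and for that it suffices to show $C$ codes ``$\prec$'' on the single pattern $u^{*}$ that is $+1$ in coordinate $1$ and $-1$ in every other coordinate. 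To see this, pick grid points $x,y$ with $\sgn(y-x)=u^{*}$, where coordinate $1$ starts at its smallest value and coordinates $2,\dots,r$ start at suitably spread-out values and all end at their smallest; starting from $x$ and repeatedly applying the pattern $e_{1j}$ (legitimate since $1\to j$ for every $j\ge2$), each step raises coordinate $1$ by one while dropping one of the other coordinates to its target value, and by transitivity each step moves up in $\prec$, so after treating all of coordinates $2,\dots,r$ one arrives at $y$ and $x\prec y$. This completes the induction. Finally, taking $n'=\max(n,r+1)$ at the start (the cases $n\le1$ being trivial), shrinking each $J_i$ to $n$ elements, and translating back through the relabellings of the previous steps yields the sets $J_1,\dots,J_r$, the permutation $\pi$ and the signs $s$ as required.

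The only step carrying genuine content is the last, and within it the crux is the chain argument showing that the tournament-dominant coordinate is also lexicographically dominant. I expect this to be the main obstacle: one cannot build such a chain from single-coordinate moves alone, since lowering any dominated coordinate moves down in $\prec$, so each drop must be ``paid for'' by a strict increase of the dominant coordinate—which is precisely why the subgrid must be taken of side length at least $r+1$.
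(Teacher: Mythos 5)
The paper itself does not prove Theorem~\ref{thm:ES-grid}: it is attributed to Fishburn and Graham~\cite{FisG93} and simply cited, with a pointer to~\cite{BucST23} for improved quantitative bounds. So there is no ``paper's proof'' to compare against, and your proposal is a genuine self-contained argument for a black-boxed result.

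Your proof is essentially correct, and it is a natural route given the tools developed in the same section (it bootstraps from Theorem~\ref{thm:grid}, which is proved in the paper, much as Fishburn--Graham-type arguments reduce to a ``canonical'' Ramsey step). The reduction to a sign-pattern order $C$ via Theorem~\ref{thm:grid}, the normalization by flipping coordinates so that single-coordinate increases go up, the transitive tournament on coordinates extracted from the patterns $e_{ij}$, and the final chain argument establishing that $C$ codes ``$\prec$'' on $u^{*}=(+1,-1,\dots,-1)$ all check out; in particular the crucial sufficiency step (comparing the coordinatewise maximum of a lower layer with the coordinatewise minimum of a higher layer to get the stacking of layers) is correct. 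Two small points worth tightening up: (i) in the tournament cycle argument the values $h,\ell$ should be chosen \emph{per coordinate} (so really $h_i>\ell_i$ in $J_i$, $h_j>\ell_j$ in $J_j$, $h_k>\ell_k$ in $J_k$), since the sets $J_i$ need not coincide --- the sign patterns you compute are unaffected; and (ii) the phrase ``suitably spread-out'' in the final chain is unnecessary: the only requirement on the starting point is that its coordinate $1$ be the minimum of $J_1$ and its coordinates $2,\dots,r$ each be strictly above the minimum of the corresponding $J_j$, which just needs $|J_1|\ge r$ and $|J_j|\ge 2$; your choice $n'=\max(n,r+1)$ is comfortably sufficient. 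Neither issue affects correctness.
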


\section{Single palette}
\label{sec:single}

In this section, we discuss the case of a single palette.
While the results presented in this section are particular cases of those presented in Section~\ref{sec:multi},
we decided to include them to demonstrate the use of arguments employed in Section~\ref{sec:multi} in a simpler setting.
We believe that this makes our presentation more accessible.

We start with a lemma that relates the existence of an ordered hypergraph colorable by one palette and not by another
to the existence of a homomorphism between the palettes.
Note that the implication in the lemma is actually an equivalence, since the reverse implication is stated as Proposition~\ref{prop:hom}.

\begin{lemma}
\label{lm:exist}
Let $\PP$ and $\PP_0$ be two palettes.
If every ordered hypergraph that is $\PP$-colorable is also $\PP_0$-colorable,
then there exists a homomorphism from the palette $\PP$ to the palette $\PP_0$.
\end{lemma}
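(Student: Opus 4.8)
The plan is to prove the contrapositive: assuming there is no homomorphism from $\PP$ to $\PP_0$, I will construct an ordered hypergraph $H$ that is $\PP$-colorable but not $\PP_0$-colorable. (The converse implication is exactly Proposition~\ref{prop:hom}, so this suffices for the claimed equivalence.) A point worth noting is that the lemma is about \emph{ordered} hypergraphs, so ``$\PP_0$-colorable'' means colorable with respect to the given linear order of $H$; unlike the unordered setting there is no reordering to fight against, which is what makes the single-palette case manageable.

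The hypergraph $H$ should be built so that it is $\PP$-colorable by a ``rigid'' coloring $\gamma$ of its pairs that realizes every feasible triple of $\PP$, with the edge set arranged so that the different realizations are linked tightly enough to constrain any $\PP_0$-coloring. A natural domain to work over is a large grid $[R]^r$ with the lexicographic order (this is precisely the setting the Ramsey tools of Section~\ref{sec:Ramsey} are tailored to), with $\gamma$ and the edge set chosen in terms of $\PP$ so that each feasible triple $(x,y,z)\in T$ is witnessed by an edge $u\prec v\prec w$ whose left, outer and right pairs receive colors $x$, $y$, $z$; by construction $H$ is then $\PP$-colorable via $\gamma$.

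Now suppose for contradiction that $H$ is $\PP_0$-colorable, witnessed by a coloring $\phi$ of its pairs. Since $\phi$ is an a priori unstructured $|C_0|$-coloring of the pairs of the grid, I would apply the grid Ramsey theorem (Theorem~\ref{thm:grid}), together with the other Ramsey-type results of Section~\ref{sec:Ramsey} as needed, to pass to a large subgrid on which $\phi$ (and, if necessary, $\gamma$) depends only on the sign pattern of a pair. On this subgrid the lexicographic order is still the ambient order, the subgrid is still large enough that $H$ restricted to it realizes every feasible triple of $\PP$, and from the way $\gamma$-colors of sign patterns translate into $\phi$-colors one reads off a map $f\colon C\to C_0$. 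For each $(x,y,z)\in T$ one picks an edge of $H$ inside the subgrid witnessing it and uses that $\phi$ is a valid $\PP_0$-coloring to conclude $(f(x),f(y),f(z))\in T_0$; hence $f$ is a homomorphism $\PP\to\PP_0$, contradicting our assumption.

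The hard part is the design step. One has to choose the vertex set, its order, the edge set, and the $\PP$-coloring $\gamma$ so that simultaneously (i) $H$ is $\PP$-colorable and $\gamma$ realizes every feasible triple, and (ii) the structure is rigid enough that, after the Ramsey normalization, a $\PP_0$-coloring is forced to factor through $\gamma$ (so that in particular the extracted map $f$ is well defined). Naive attempts fail here: a disjoint union of gadgets realizing the individual feasible triples, or a ``sign-pattern'' hypergraph with \emph{all} triples witnessing a feasible pattern taken as edges, need not be non-$\PP_0$-colorable, because a $\PP_0$-coloring is free to treat two pairs of the same $\gamma$-color differently. The crux of the proof is to link the realizations of the various feasible triples through shared pairs tightly enough to force consistency, while keeping $H$ itself $\PP$-colorable, and to control the quantitative interplay between the grid dimension $r$, the size of the subgrid produced by Theorem~\ref{thm:grid}, and the combinatorics of which triples of sign patterns can appear as the left, outer and right pairs of an edge.
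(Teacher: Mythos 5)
Your overall strategy (build a sufficiently generic ordered $\PP$-colorable hypergraph, assume a witnessing $\PP_0$-coloring $\phi$, and extract a homomorphism $f\colon C\to C_0$ from it) matches the paper's, and you correctly isolate the central obstacle: $\phi$ is under no obligation to assign the same $C_0$-color to two pairs that received the same $\PP$-color $\gamma$, so there is no obvious well-defined map $C\to C_0$. But you then leave exactly that obstacle unresolved --- you describe the properties the construction ``should'' have and call the design step ``the hard part'' without supplying it --- and the mechanism you gesture at (Ramseyfying $\phi$ over a grid via Theorem~\ref{thm:grid} so that it depends only on sign patterns) does not close the gap: after that reduction $\phi$ depends on positions, not on $\gamma$-colors, so it still does not induce a map $C\to C_0$. (The grid machinery is in any case only needed for the multi-palette version, Lemma~\ref{lm:existm}; the single-palette case uses only ordinary Ramsey.)

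The missing idea in the paper is the regularity method, used in a way that sidesteps the need for $\phi$ to be constant on $\gamma$-color classes. One colors the pairs of a large ordered complete graph uniformly at random with $C$ (Chernoff guarantees every color is dense between any two large sets), takes the induced $\PP$-colorable ordered hypergraph $H$, and superimposes the assumed $\PP_0$-coloring to get a $C\times C_0$-edge-coloring. Applying Lemma~\ref{lm:SRL} (prepartitioned by $R$ consecutive blocks) and Lemma~\ref{lm:clique} yields pairwise $\varepsilon$-regular representative parts; between each pair of parts one defines a candidate map $f$ by sending $x\in C$ to the \emph{most frequent} $C_0$-color among pairs $\gamma$-colored $x$. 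Ramsey is applied only to the auxiliary $R$-vertex graph colored by these candidate maps, producing three parts sharing one map $f_0$; the counting lemma (Lemma~\ref{lm:triangle}) then produces, for each feasible triple $(x,y,z)\in T$, a single triangle whose three pairs simultaneously carry $\gamma$-colors $x,y,z$ and $\phi$-colors $f_0(x),f_0(y),f_0(z)$, forcing $(f_0(x),f_0(y),f_0(z))\in T_0$. One witness per feasible triple suffices, which is why no pointwise consistency of $\phi$ on color classes is ever needed. Without this (or an equivalent device), your argument does not go through.
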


\begin{proof}
Fix palettes $\PP=(C,T)$ and $\PP_0=(C_0,T_0)$ for the proof and
suppose that every ordered $\PP$-colorable hypergraph is $\PP_0$-colorable.
Let $R$ be an integer such that any coloring of the edges of $K_R$ with $|C_0|^{|C|}$ colors contains a monochromatic triangle;
such $R$ exists by Ramsey's Theorem (Theorem~\ref{thm:Ramsey} applied with $r=2$, $n=3$ and $\ell=|C_0|^{|C|}$).
We apply Lemma~\ref{lm:SRL} with
\[\kappa=R\mbox{,}\qquad m=|C|\cdot |C_0|\qquad\mbox{and}\qquad\varepsilon=\min\left\{\frac{1}{100m},\frac{2}{R^2}\right\}\]
to get $K_0$.
Next choose $N\ge 2K_0$ to be a sufficiently large integer divisible by $R$ so that it holds that
\begin{equation}
3^N\cdot |C|\cdot e^{-\frac{\varepsilon^2 N^2}{8|C|\;K_0^2}}<1.
\end{equation}
Consider an ordered complete graph $G$ with $N$ vertices and color its edges uniformly at random with the colors from $C$.
We observe using Chernoff Bound that if $A$ and $B$ are two disjoint subsets of $V(G)$, each of size at least $\lfloor N/K_0\rfloor$ (note that $\lfloor N/K_0\rfloor\ge N/2K_0$ as $N\ge 2K_0$) and $x\in C$,
then the probability that the number of edges between $A$ and $B$ colored with $x$ is smaller than $(1-\varepsilon)\frac{|A|\;|B|}{|C|}$
is at most
\[e^{-\frac{\varepsilon^2|A|\;|B|}{2|C|}}\le e^{-\frac{\varepsilon^2N^2}{8|C|\;K_0^2}}.\]
Since the number of pairs of disjoints sets $A$ and $B$ with size at least $\lfloor N/K_0\rfloor$ is at most $3^N$,
we conclude using the choice of $N$ that the probability that
there exists a pair of disjoints sets $A$ and $B$, each of size at least $\lfloor N/K_0\rfloor$, such that
the number of edges between $A$ and $B$ colored with any single color $x\in C$ is smaller than $(1-\varepsilon)\frac{|A|\;|B|}{|C|}$
is less than $1$.
Hence, there exists a coloring of the edges of $G$ such that
the number of edges colored with each color $x\in C$
is at least $(1-\varepsilon)\frac{|A|\;|B|}{|C|}$ for any two such disjoint sets $A$ and $B$;
fix such a coloring for the rest of the proof.

Let $H$ be the ordered hypergraph with $V(H)=V(G)$ such that
three vertices $v\preceq v'\preceq v''$ form an edge iff $(x,y,z)\in T$ where $x$ is the color of $vv'$, $y$ is the color of $vv''$ and $z$ is the color of $v'v''$.
The construction of the ordered hypergraph $H$ ensures that it is $\PP$-colorable.
Therefore, the ordered hypergraph $H$ is also $\PP_0$-colorable;
fix a coloring of pairs of the vertices of $H$ witnessing this and view it as a coloring of the edges of $G$.
Hence, each edge of $G$ is now colored with a pair $(x,x')\in C\times C_0$.
Observe that it holds for any three vertices $v\preceq v'\preceq v''$ that
if $(x,x')$ is the color of $vv'$, $(y,y')$ the color of $vv''$ and $(z,z')$ the color of $v'v''$, and $(x,y,z)\in T$,
then $(x',y',z')\in T_0$.

Let $U_1\dot\cup\cdots\dot\cup U_R$ be the equipartition of $V(G)$ such that $v\preceq v'$ for all $v\in U_i$ and $v'\in U_{i'}$ with $1\le i<i'\le R$.
Next apply Lemma~\ref{lm:SRL} to the graph $\unorient{G}$ with the edge-coloring given by the pairs of colors from $C\times C_0$ and
the equipartition $U_1\dot\cup\cdots\dot\cup U_R$ to get an equipartition $V_1\dot\cup\cdots\dot\cup V_k$, $k_0\le k\le K_0$, with the properties given in the statement of the lemma.
Lemma~\ref{lm:clique} implies that there exist indices $j_1,\ldots,j_R$ such that $V_{j_i}\subseteq U_i$ for all $i\in [R]$ and
each pair $(V_{j_i},V_{j_{i'}})$, $1\le i<i'\le R$, is $\varepsilon$-regular.
Fix such indices $j_1,\ldots,j_R$ for the rest of the proof.

We now form an auxiliary edge-colored complete graph $G'$ with $R$ vertices;
the $R$ vertices of $G'$ correspond to the parts $V_{j_1},\ldots,V_{j_R}$.
For each pair $i$ and $i'$ such that $1\le i<i'\le R$,
fix a mapping $f:C\to C'$ such that
$f(x)$, $x\in C$, is the most frequent color $x'\in C_0$ of the edges between $V_{j_i}$ and $V_{j_{i'}}$ that are colored with $x\in C$ (ties are resolved arbitrarily).
Recall that for every $x\in C$, at least $(1-\varepsilon)\frac{|V_{j_i}|\;|V_{j_{i'}}|}{|C|}$ edges between $V_{j_i}$ and $V_{j_{i'}}$ are colored with $x$, and
thus there are at least $(1-\varepsilon)\frac{|V_{j_i}|\;|V_{j_{i'}}|}{m}$ edges between $V_{j_i}$ and $V_{j_{i'}}$ colored with the pair $(x,f(x))\in C\times C_0$.
We view the graph $G'$ as edge-colored with the mappings $f$ and deduce using Ramsey's Theorem that $G'$ contains a monochromatic triangle,
i.e., there exist indices $i_1<i_2<i_3$ such that
each pair of the parts $V_{j_{i_1}}\subseteq U_{i_1}$, $V_{j_{i_2}}\subseteq U_{i_2}$ and $V_{j_{i_3}}\subseteq U_{i_3}$
is associated with the same mapping from $C$ to $C_0$;
let $f_0$ be this mapping.
Since each pair of the parts $V_{j_{i_1}}$, $V_{j_{i_2}}$ and $V_{j_{i_3}}$ is $\varepsilon$-regular,
Lemma~\ref{lm:triangle} implies that for all $x_{12},x_{13},x_{23}\in C$,
there exist vertices $v_1\in V_{j_{i_1}}$, $v_2\in V_{j_{i_2}}$ and $v_3\in V_{j_{i_3}}$ such that
the edge $v_1v_2$ in the graph $G$ is colored with $(x_{12},f_0(x_{12}))$,
the edge $v_1v_3$ with $(x_{13},f_0(x_{13}))$, and
the edge $v_2v_3$ with $(x_{23},f_0(x_{23}))$.
In particular, if $(x_{12},x_{13},x_{23})\in T$,
then the ordered hypergraph $H$ contains the edge $v_1v_2v_3$ and so $(f_0(x_{12}),f_0(x_{13}),f_0(x_{23}))\in T_0$.
We conclude that the mapping $f_0$ is a homomorphism from the palette $\PP$ to the palette $\PP_0$.
\end{proof}

The second lemma that is needed to prove the main result of this section
relates the existence of a hypergraph colorable by one palette but not by another
to the existence of such ordered hypergraphs.

\begin{lemma}
\label{lm:single}
Let $\PP$ and $\PP_0$ be two palettes.
If there exists an ordered hypergraph $H^+$ that is $\PP$-colorable but not $\PP_0$-colorable and
there exists an ordered hypergraph $H^-$ that is $\PP$-colorable but not $\inv(\PP_0)$-colorable,
then there exists a hypergraph $H$ that is $\PP$-colorable but not $\PP_0$-colorable.
\end{lemma}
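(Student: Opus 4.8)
The plan is to realize $H$ as the unordered hypergraph underlying a sufficiently deep iterated blow-up of an ordered disjoint union of $H^+$ and $H^-$, after two reductions. First, since $H^-$ is not $\inv(\PP_0)$-colorable, the ordered hypergraph $\inv(H^-)$ is not $\PP_0$-colorable (using that an ordered hypergraph $F$ is $\PP'$-colorable iff $\inv(F)$ is $\inv(\PP')$-colorable, as noted in Section~\ref{sec:prelim}), and $\PP_0$-colorability passes to ordered subhypergraphs. Hence it is enough to build an \emph{unordered} $\PP$-colorable hypergraph $H$ such that for \emph{every} linear order $\preceq$ on $V(H)$ the ordered hypergraph $H^{\preceq}$ contains an ordered copy of $H^+$ or of $\inv(H^-)$: any such $H^{\preceq}$ is then not $\PP_0$-colorable, so $H$ itself is not, while $H$ is $\PP$-colorable by construction. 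Second, let $G_0$ be the ordered hypergraph consisting of $H^+$ followed by $H^-$ (all vertices of $H^+$ before all vertices of $H^-$, each block retaining its own order and its edges). Then $G_0$ is $\PP$-colorable---color the pairs inside each block by colorings witnessing $\PP$-colorability of $H^+$ and of $H^-$, and color cross pairs arbitrarily, since no cross triple is an edge---while $G_0$ contains $H^+$ and $H^-$ as ordered subhypergraphs and $\inv(G_0)$ contains $\inv(H^-)$.

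Next I would define the iterated ordered blow-ups $G_1,G_2,\dots$ of $G_0$: the hypergraph $G_{j+1}$ arises from $G_0$ by replacing each vertex with a disjoint copy of $G_j$, ordering the copies as $G_0$ orders its vertices and ordering each copy internally as $G_j$; a triple with its three vertices in three distinct copies is an edge exactly when the corresponding triple of vertices of $G_0$ is an edge, a triple with exactly two vertices in a single copy is a non-edge, and triples inside one copy are edges as in $G_j$. An easy induction shows each $G_j$ is $\PP$-colorable: inside each copy use the inductive coloring, and to a pair meeting two distinct copies assign the single color that a fixed witnessing coloring $c_0$ of $G_0$ gives to the corresponding pair of $G_0$; the only edges are those inside copies (handled inductively) and the transversal ones (handled by the feasibility of the $G_0$-triple under $c_0$). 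I would then take $H:=\unorient{G_j}$ for a large enough $j$.

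The heart of the argument is to show that, for $j$ large enough, every linear order $\preceq$ on $V(G_j)$ makes $(G_j)^{\preceq}$ contain an ordered copy of $H^+$ or of $\inv(H^-)$; that is, a deep enough iterated blow-up is ``order-rigid up to reversal''. The strategy is a Ramsey-type extraction carried out down the recursion tree of $G_j$: using the Erd\H os--Szekeres theorem (Theorem~\ref{thm:ES}) and Ramsey's theorem (Theorem~\ref{thm:Ramsey}) level by level, one prunes the copies at each internal node so that on the surviving sub-blow-up the order $\preceq$ either agrees, at every level, with the intrinsic order of $G_0$ or with its reverse; the surviving sub-blow-up then contains an ordered copy of $G_0$ (hence of $H^+$) or of $\inv(G_0)$ (hence of $\inv(H^-)$). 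I expect this last step to be the main obstacle. A single application of Erd\H os--Szekeres to all of $V(G_j)$ is far too weak---a monotone subset of the whole vertex set can be a vanishing fraction of it and need not contain the desired copies---so the pruning must proceed recursively through the tree, and the depth $j$ must be chosen large with respect to a (tower-type) function of $|V(H^+)|$ and $|V(H^-)|$ so that sufficiently many copies survive at every node after all prunings. Establishing that such a recursive pruning can always be performed---equivalently, that no order of the leaves can fail to be ``nice'' at every node of a deep enough tree---is where the combinatorial estimates of Section~\ref{sec:Ramsey} will be needed, and if the bare iterated blow-up turns out not to be rigid enough one would instead index the blow-up by a high-dimensional grid and invoke the Fishburn--Graham theorem (Theorem~\ref{thm:ES-grid}) to control the order on a subgrid.
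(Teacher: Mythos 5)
The paper's proof of Lemma~\ref{lm:single} is fully \emph{probabilistic}: it takes vertex set $[N]$ with its natural linear order, colors all $\binom{N}{2}$ pairs uniformly at random by colors from $C$, and lets $H$ be the hypergraph whose edges are exactly the triples whose colors form a feasible triple of $\PP$ under the natural order (so $H$ is $\PP$-colorable by construction). To rule out $\PP_0$-colorability, it fixes a near-packing $F_1,\dots,F_m$ of $k$-sets ($k=(n^+-1)(n^--1)+1$) that pairwise share at most one vertex; for any candidate order $\preceq$, Erd\H os--Szekeres gives inside each $F_i$ either $n^+$ elements increasing in both orders or $n^-$ elements decreasing in the natural order but increasing in $\preceq$, and the random coloring then hits a copy of $H^+$ or of $\inv(H^-)$ on those elements with probability at least $|C|^{-n^2}$. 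Independence of the $F_i$-events plus a union bound over the $N!$ orders finishes the proof.

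Your proposal is genuinely different --- a deterministic iterated ordered blow-up of $G_0 = H^+\!\cdot\! H^-$ --- and the reduction steps (passing to $\inv(H^-)$; $\PP$-colorability of $G_0$ and of each $G_j$) are fine, but the central ``order-rigidity up to reversal'' claim has a concrete gap that I do not think your pruning scheme closes. At any internal node of the recursion tree, the copies $A_1,\dots,A_t$ carry an arbitrary linear order $\preceq$, and even if you can extract sub-copies $S_1,\dots,S_t$ so that the $S_i$'s occupy disjoint intervals in $\preceq$, the resulting \emph{layering permutation} $\pi$ is arbitrary, not the identity or its reverse. Applying Erd\H os--Szekeres to $\pi$ gives a $\pi$-monotone subsequence of indices $i_1<\cdots<i_s$, but then the transversal structure you retain is the blow-up of the \emph{induced subhypergraph of $G_0$ on $\{i_1,\dots,i_s\}$}, and this induced subhypergraph generally contains neither $H^+$ nor $H^-$ once indices are skipped (for instance, if $H^+$ has an edge on vertices $1,2,3$ and the surviving indices are $1,3,5$, that edge is simply gone). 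So the surviving sub-blow-up need not contain $G_0$ or $\inv(G_0)$; the sentence ``the surviving sub-blow-up then contains an ordered copy of $G_0$ or of $\inv(G_0)$'' is precisely where the argument breaks, and the recursion down the tree inherits the same defect at every level. Invoking Fishburn--Graham (Theorem~\ref{thm:ES-grid}) instead does control the order on a subgrid, but it likewise only hands you a sparse monotone sub-pattern, not the consecutive block of indices you need. The paper sidesteps exactly this difficulty by using a \emph{random} coloring: it needs only that the selected $n^+$ (or $n^-$) elements be monotone simultaneously in the natural order and in $\preceq$, and then the random pair-colors produce a coloring extending $H^+$'s witness with probability bounded away from $0$; no alignment of indices inside a fixed base hypergraph is required. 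If you want to keep a deterministic construction you would have to build the base object so that \emph{every} monotone pattern of indices still contains $H^+$ (or $\inv(H^-)$), which is a much stronger requirement than what the concatenation $H^+\!\cdot\!H^-$ provides, and it is not clear the iterated blow-up of any finite ordered hypergraph has this property.
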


\begin{proof}
Fix the palettes $\PP=(C,T)$ and $\PP_0$, and the ordered hypergraphs $H^+$ and $H^-$ as in the statement of the lemma.
Let $n^+$ and $n^-$ be the numbers of vertices of the hypergraphs $H^+$ and $H^-$, respectively, and
set $n=\max\{n^+,n^-\}$ and $k=(n^+-1)(n^--1)+1$.
Finally, let $N$ be a positive integer such that
\[N!\cdot(1-|C|^{-n^2})^{\frac{N^2}{k^4}}<1;\]
such $N$ exists since $N!$ is at most $e^{N\log N}$ and the integers $k$, $n$ and $|C|$ are fixed.
The vertex set $V(H)$ of the sought hypergraph $H$ will be $[N]$,
in particular, the vertex set $V(H)$ is equipped with a natural linear order (although $H$ is not an ordered hypergraph).
Color the pairs of vertices of $H$ with colors from the set $C$ uniformly at random and
include as an edge any triple that is colored consistently with the palette $\PP$ and the linear order on $V(H)$.
Clearly, the constructed hypergraph $H$ is $\PP$-colorable.

We now show that the hypergraph $H$ is not $\PP_0$-colorable with positive probability.
Let $F_1,\ldots,F_m$ be any inclusion-wise maximal family of $k$-element subsets of $V(H)$ such that
no two sets $F_i$ and $F_j$, $1\le i<j\le m$, have more than a single vertex in common.
Since a single $k$-vertex set blocks less than $\binom{k}{2}\binom{N-2}{k-2}$ other sets from the inclusion in the family,
the number $m$ of the sets is at least
\begin{equation}
m\ge\frac{\binom{N}{k}}{\binom{k}{2}\binom{N-2}{k-2}}=\frac{2N(N-1)}{k^2(k-1)^2}\ge\frac{N^2}{k^4}.
\label{eq:Fnum}
\end{equation}
Fix any linear order $\preceq$ on $V(H)$.
Consider a set $F_i$, $i\in [m]$.
By the Erd\H os-Szekeres Theorem (Theorem~\ref{thm:ES}),
the set $F_i$ contains an $n^+$ elements that form an increasing sequence both in the natural order on $V(H)=[N]$ and in $\preceq$ or
$n^-$ elements that form a decreasing sequence in the natural order on $V(H)=[N]$ but an increasing sequence in $\preceq$.
In the former case,
the probability that the ordered hypergraph $H^{\preceq}$ restricted to the $n^+$ elements contains a copy of $H^+$
is at least $|C|^{-\binom{n^+}{2}}$ and
so the restriction of the ordered hypergraph $H^{\preceq}$ to $F_i$ is not $\PP_0$-colorable
with probability at least $|C|^{-\binom{n^+}{2}}\ge |C|^{-n^2}$.
In the latter case,
the probability that the hypergraph $H^{\preceq}$ restricted to the $n^-$ elements contains a copy of $\inv(H^-)$
is at least $|C|^{-\binom{n^-}{2}}$ and
so the restriction of the ordered hypergraph $H^{\preceq}$ to $F_i$ is not $\PP_0$-colorable
with probability at least $|C|^{-\binom{n^-}{2}}\ge |C|^{-n^2}$.
We conclude that the probability that
the restriction of the ordered hypergraph $H^{\preceq}$ to $F_i$ is $\PP_0$-colorable is at most $1-|C|^{-n^2}$.

Since no two sets $F_i$ and $F_j$, $1\le i<j\le m$, have more than a single vertex in common,
the events that
the restriction of the ordered hypergraph $H^{\preceq}$ to $F_i$ is $\PP_0$-colorable
are independent for all $i\in [m]$.
Hence, using \eqref{eq:Fnum}, we obtain that the probability that
the ordered hypergraph $H^{\preceq}$ is $\PP_0$-colorable is at most
\[(1-|C|^{-n^2})^m\le (1-|C|^{-n^2})^{\frac{N^2}{k^4}}.\]
Since the hypergraph $H$ is $\PP_0$-colorable if and only if
there exists a linear order $\preceq$ such that the ordered hypergraph $H^{\preceq}$ is $\PP_0$-colorable and
the number of linear orders of an $N$-element set is $N!$,
the probability that the hypergraph $H$ is $\PP_0$-colorable is at most
\[N!\cdot (1-|C|^{-n^2})^{\frac{N^2}{k^4}},\]
which is less than one by the choice of $N$.
We conclude that the hypergraph $H$ is not $\PP_0$-colorable with positive probability and
so the hypergraph $H$ has the properties given in the statement of the lemma with positive probability,
which in particular establishes the existence of such a hypergraph.
\end{proof}

We are now ready to prove the main result of this section.

\begin{theorem}
\label{thm:single}
Let $\PP$ and $\PP_0$ be two palettes.
There exists a hypergraph that is $\PP$-colorable but not $\PP_0$-colorable if and only if
there is no homomorphism from the palette $\PP$ to the palette $\PP_0$ or to the palette $\inv(\PP_0)$.
\end{theorem}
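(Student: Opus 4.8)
The plan is to prove both directions, with the bulk of the work already packaged in Lemmas~\ref{lm:exist} and~\ref{lm:single}.

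\emph{The easy direction.}
Suppose there exists a homomorphism from $\PP$ to $\PP_0$ or from $\PP$ to $\inv(\PP_0)$; I will show every $\PP$-colorable hypergraph is $\PP_0$-colorable, which is the contrapositive of the ``only if'' part. If there is a homomorphism $\PP\to\PP_0$, then by Proposition~\ref{prop:hom} every $\PP$-colorable ordered hypergraph is $\PP_0$-colorable, and since a (non-ordered) hypergraph $H$ being $\PP$-colorable means some $H^\preceq$ is $\PP$-colorable, that same $H^\preceq$ is $\PP_0$-colorable, so $H$ is $\PP_0$-colorable. If instead the homomorphism goes $\PP\to\inv(\PP_0)$, then every $\PP$-colorable ordered hypergraph is $\inv(\PP_0)$-colorable; applying this to $\inv(H^\preceq)$ (which is $\PP$-colorable exactly when $H^\preceq$ is, by the observation on the inverse palette in Section~\ref{sec:prelim}) shows $\inv(H^\preceq)$ is $\inv(\PP_0)$-colorable, hence $H^\preceq$ is $\PP_0$-colorable, hence $H$ is $\PP_0$-colorable. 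In either case no $\PP$-colorable hypergraph fails to be $\PP_0$-colorable.

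\emph{The hard direction.}
For the ``if'' direction, assume there is no homomorphism from $\PP$ to $\PP_0$ and none from $\PP$ to $\inv(\PP_0)$; I want a hypergraph that is $\PP$-colorable but not $\PP_0$-colorable. The absence of a homomorphism $\PP\to\PP_0$ lets me invoke the contrapositive of Lemma~\ref{lm:exist}: there exists an ordered hypergraph $H^+$ that is $\PP$-colorable but not $\PP_0$-colorable. Likewise, applying Lemma~\ref{lm:exist} with $\inv(\PP_0)$ in place of $\PP_0$ (using that there is no homomorphism $\PP\to\inv(\PP_0)$) yields an ordered hypergraph $H^-$ that is $\PP$-colorable but not $\inv(\PP_0)$-colorable. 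These are exactly the hypotheses of Lemma~\ref{lm:single}, whose conclusion is a (non-ordered) hypergraph $H$ that is $\PP$-colorable but not $\PP_0$-colorable. That completes the proof.

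\emph{Where the difficulty sits.}
All the real content has been pushed into the two lemmas, so the theorem itself is essentially a bookkeeping assembly; the only subtlety to be careful about is the direction of each implication (Lemma~\ref{lm:exist} is the converse of Proposition~\ref{prop:hom}, and we use it contrapositively) and the clean use of the inverse-palette symmetry to get the ``decreasing'' witness $H^-$. The genuinely hard step is the one I am allowed to cite, namely Lemma~\ref{lm:exist}: extracting an \emph{actual} homomorphism from the mere fact that all $\PP$-colorable ordered hypergraphs are $\PP_0$-colorable requires the regularity-plus-Ramsey machinery of its proof, and if one had to reprove it here the main obstacle would be producing, inside a random edge-colored graph, a regular triangle of parts on which the $\PP_0$-coloring collapses to a single consistent map $C\to C_0$. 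But at the level of Theorem~\ref{thm:single} itself there is no further obstacle.
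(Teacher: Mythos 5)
Your proof follows the paper's argument exactly: the contrapositive of the ``only if'' direction via Proposition~\ref{prop:hom}, and the ``if'' direction by applying Lemma~\ref{lm:exist} twice (once to $\PP_0$ and once to $\inv(\PP_0)$) to obtain $H^+$ and $H^-$, then feeding them into Lemma~\ref{lm:single}. The one slip is in the easy direction: your parenthetical claim that $\inv(H^{\preceq})$ is $\PP$-colorable exactly when $H^{\preceq}$ is does not follow from the inverse-palette observation, which says that $H^{\preceq}$ is $\PP$-colorable iff $\inv(H^{\preceq})$ is $\inv(\PP)$-colorable. The repair is immediate and is what the paper does: apply the homomorphism $\PP\to\inv(\PP_0)$ to $H^{\preceq}$ itself, so Proposition~\ref{prop:hom} gives that $H^{\preceq}$ is $\inv(\PP_0)$-colorable, hence $\inv(H^{\preceq})$ is $\PP_0$-colorable, and since $\inv(H^{\preceq})$ has the same underlying unordered hypergraph as $H^{\preceq}$, the hypergraph $H$ is $\PP_0$-colorable (``keep the colors of the pairs and reverse the vertex order'').
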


\begin{proof}
Fix the palettes $\PP$ and $\PP_0$.
We prove the equivalence of the statement by proving each implication separately,
starting with showing the contrapositive of the ``only if'' part.
This means that we will show that
if there is a homomorphism from the palette $\PP$ to the palette $\PP_0$ or to the palette $\inv(\PP_0)$,
then there is no hypergraph that is $\PP$-colorable but not $\PP_0$-colorable,
i.e., every $\PP$-colorable hypergraph is also $\PP_0$-colorable.
If there exists a homomorphism from the palette $\PP$ to the palette $\PP_0$,
then every $\PP$-colorable hypergraph $H$ is $\PP_0$-colorable by Proposition~\ref{prop:hom} (consider $H$ as ordered hypergraph
with the order witnessing its $\PP$-colorability and apply Proposition~\ref{prop:hom}).
Similarly,
if there exists a homomorphism from the palette $\PP$ to the palette $\inv(\PP_0)$,
then every $\PP$-colorable hypergraph $H$ is $\inv(\PP_0)$-colorable and
so it is also $\PP_0$-colorable (keep the colors of the pairs of vertices and reverse the vertex order).

It remains to show the ``if'' part of the equivalence.
Assume that there is no homomorphism from the palette $\PP$ to the palette $\PP_0$ and
there is no homomorphism from the palette $\PP$ to the palette $\inv(\PP_0)$.
Lemma~\ref{lm:exist} yields that
there exist an ordered hypergraph $H^+$ that is $\PP$-colorable but not $\PP_0$-colorable and
an ordered hypergraph $H^-$ that is $\PP$-colorable but not $\inv(\PP_0)$-colorable.
The existence of a hypergraph that is $\PP$-colorable but not $\PP_0$-colorable
now follows from Lemma~\ref{lm:single}.
\end{proof}

\section{Multiple palettes}
\label{sec:multi}

In this section, we prove our main result.
The structure of the proof is similar to the proof of Theorem~\ref{thm:single},
however, there are various challenges to be overcome,
which required additional ideas and more refined arguments (as also evidenced
by more involved Ramsey theory results employed in this section).

We start with the version of Lemma~\ref{lm:exist} for multiple palettes.
As in the case of Lemma~\ref{lm:exist},
the implication in this lemma is actually an equivalence.
Recall that $\unorient{H}$ stands for the unordered hypergraph underlying the ordered hypergraph $H$.

\begin{lemma}
\label{lm:existm}
Let $\PP_1,\ldots,\PP_r$ and $\PP_0$ be $r+1$ palettes.
If every ordered hypergraph $H$ such that $H$ is $\PP_1$-colorable and
$\unorient{H}$ is $\PP_s$-colorable for every $s\in [r]\setminus\{1\}$
is also $\PP_0$-colorable,
then there exists a homomorphism from the palette $\PP_1\times\prod\limits_{s=2}^r\sym{\PP_s}$ to the palette $\PP_0$.
\end{lemma}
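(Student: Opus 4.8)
The plan is to generalize the argument of Lemma~\ref{lm:exist}. As before, I would build a random ordered hypergraph $H$ that is automatically $\PP_1$-colorable and whose underlying hypergraph is $\PP_s$-colorable for $s\geq2$, then feed the hypothesis $\PP_0$-colorability of $H$ back through a regularity/Ramsey sandwich to extract a homomorphism. The key new point is that for $s\geq2$ we only get that $\unorient H$ is $\PP_s$-colorable, meaning there is \emph{some} vertex order witnessing it — and this is exactly what the symmetrization $\sym{\PP_s}$ is designed to absorb. Concretely: fix a color on each pair from $C_1$ (the palette $\PP_1$) and let this color also be read, via an arbitrary but fixed reference order, as a color of $\sym{\PP_s}$ on that pair; the six-fold definition of $\sym T$ guarantees that whatever order a $\PP_s$-coloring of $\unorient H$ might use, after recoloring edges that disagree with the reference order by their clones, the induced triples land in $\sym{T_s}$. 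So the target palette to hit is $\PP:=\PP_1\times\prod_{s=2}^r\sym{\PP_s}$, and the random ordered hypergraph should be constructed from a uniformly random $C_1$-coloring of the pairs of a large ordered $K_N$, declaring $v\preceq v'\preceq v''$ an edge iff the corresponding $C_1$-triple lies in $T_1$. This $H$ is $\PP_1$-colorable by construction, and I would need a short separate lemma (a Ramsey-type argument, since each $\PP_s$ has finitely many colors and we need the chosen witnessing order to be ``consistent'' on a large subset) to see that $\unorient H$ is $\PP_s$-colorable for every $s\geq2$ — this is the first genuinely new ingredient compared to the single-palette case.

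Granting that, the hypothesis gives a $\PP_0$-coloring of $H$. Exactly as in Lemma~\ref{lm:exist}, I would view the pairs of $V(H)$ as colored by the product $C_1\times C_0$ (original random color together with the $\PP_0$-color), run Lemma~\ref{lm:SRL} with the initial equipartition $U_1\dot\cup\cdots\dot\cup U_\kappa$ respecting the linear order (so that the $\varepsilon$-regular representative parts inherit a left-to-right order), use Lemma~\ref{lm:clique} to pick one $\varepsilon$-regular representative part inside each $U_i$, then define on the auxiliary complete graph on these $\kappa$ parts an edge-coloring that records, for each ordered pair of parts, the ``majority map'' $C_1\to C_0$ sending each $x\in C_1$ to the most frequent $\PP_0$-color among the $x$-colored edges between the two parts; a Ramsey monochromatic triangle then yields three parts, in increasing order, all carrying the same majority map $f_0:C_1\to C_0$, with every pair $\varepsilon$-regular and every color class still dense. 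Lemma~\ref{lm:triangle} then realizes, for any prescribed $C_1$-colors $x_{12},x_{13},x_{23}$, a genuine triangle $v_1\prec v_2\prec v_3$ in $G$ with those $C_1$-colors and with $\PP_0$-colors $f_0(x_{12}),f_0(x_{13}),f_0(x_{23})$. Whenever $(x_{12},x_{13},x_{23})\in T_1$ this is an edge of $H$, forcing $(f_0(x_{12}),f_0(x_{13}),f_0(x_{23}))\in T_0$.

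That last step only shows $f_0$ respects $\PP_1$; to get a homomorphism from the full product $\PP=\PP_1\times\prod_{s\ge2}\sym{\PP_s}$ I must lift $f_0$ to the larger color set. Here I would exploit that $V(H)$ carries the natural order and, for $s\geq 2$, a \emph{fixed} $\PP_s$-coloring of $\unorient H$ with a fixed witnessing order; on any triangle $v_1\prec v_2\prec v_3$ selected above, each pair then also carries a well-defined $\sym{\PP_s}$-color (its $\PP_s$-color, possibly replaced by the clone according to whether the witnessing order agrees with $\prec$). After one more layer of Ramsey — now coloring tuples by the whole profile of $\sym{\PP_s}$-colors together with the $f_0$-value — I can pass to a subset on which the triangle found by Lemma~\ref{lm:triangle} realizes a prescribed tuple of $\prod_{s\ge2}\sym{\PP_s}$-colors as well, and reading off the resulting map gives a homomorphism $\PP\to\PP_0$. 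The main obstacle, and the part I expect to require the most care, is exactly this coordination: ensuring that a \emph{single} choice of witnessing orders for the $\PP_s$-colorings of $\unorient H$ can be made uniform on a large subset (so that the clone-vs-original decision is a function of $\sgn$ of coordinate differences) and simultaneously compatible with the regularity partition and with enough freedom to prescribe arbitrary product-colors on the extracted triangle. This is precisely where Theorem~\ref{thm:grid} and Theorem~\ref{thm:ES-grid} (or Theorem~\ref{thm:Ramsey} in a sufficiently iterated form) enter, and where the argument is genuinely more delicate than in the single-palette case; the quantifiers $\kappa, m, \varepsilon, N$ must be chosen in the right dependency order so that all the Ramsey and regularity applications can be carried out before the probabilistic construction of the $C_1$-coloring is invoked.
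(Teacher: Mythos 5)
There is a fatal gap at the very first step: your construction of the random ordered hypergraph $H$. You propose to color the pairs of an ordered $K_N$ uniformly at random with colors from $C_1$ alone, declare $v\preceq v'\preceq v''$ an edge iff the resulting $C_1$-triple lies in $T_1$, and then appeal to a not-yet-stated Ramsey-type lemma to show that $\unorient{H}$ is $\PP_s$-colorable for all $s\geq 2$. No such lemma can exist because the claim is simply false. Take $\PP_1$ to be the one-color palette whose unique triple is feasible, so $H=K_N^{(3)}$, and take $\PP_2$ to be the palette with colors $\{\alpha,\beta\}$ and the single feasible triple $(\alpha,\beta,\alpha)$; already $K_4^{(3)}$ is not $\PP_2$-colorable, hence $\unorient{H}=K_N^{(3)}$ is not, for any $N\geq 4$. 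More generally, a random $\PP_1$-construction will typically contain, as an induced ordered subhypergraph, some $\PP_1$-colorable ordered hypergraph whose underlying hypergraph is \emph{not} $\PP_s$-colorable, and since $\PP_s$-colorability is hereditary, this obstruction survives in $\unorient{H}$. The paper avoids this by coloring each pair with an element of the full product $C_1\times\cdots\times C_r$ and maintaining $r$ distinct linear orders $\preceq_1,\ldots,\preceq_r$ derived from an equipartition indexed by $[R]^r$; an edge is placed iff the $q$-th coordinate is consistent with $\PP_q$ under $\preceq_q$ for \emph{every} $q\in[r]$ simultaneously. Then $H^{\preceq_q}$ is $\PP_q$-colorable by construction for all $q$, with no Ramsey argument needed, and the grid structure $[R]^r$ together with the $r$ orders is exactly what Theorem~\ref{thm:grid} requires downstream.

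A secondary confusion: even granting your (false) first step, your regularity argument would produce a majority map $f_0:C_1\to C_0$ that is a homomorphism from $\PP_1$ to $\PP_0$, and you then say you must ``lift'' $f_0$ to the larger color set via more Ramsey work. But $(x_1,\ldots,x_r)\mapsto f_0(x_1)$ is already a homomorphism from $\PP_1\times\prod_{s\geq 2}\sym{\PP_s}$ to $\PP_0$, since the first-coordinate projection of any feasible triple of the product lies in $T_1$. A homomorphism from $\PP_1$ is a \emph{stronger} object than one from the product, not a weaker one. The trouble is that under the lemma's hypothesis such an $f_0$ need not exist at all; the whole point of passing to the product with symmetrization is to capture cases with no $\PP_1\to\PP_0$ homomorphism. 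The paper therefore applies regularity to the coloring by $C_1\times\cdots\times C_r\times C_0$ so that the majority map already records a function on the full color product, and it is the grid Ramsey (Theorem~\ref{thm:grid}) that controls how this function depends on the $\sgn$ profile of the coordinate differences — which is exactly the clone-vs-original distinction encoded by $\sym{\PP_s}$.
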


\begin{proof}
Fix palettes $\PP_1=(C_1,T_1),\ldots,\PP_r=(C_r,T_r)$ and $\PP_0=(C_0,T_0)$ for the proof, and
suppose that every ordered hypergraph $H$ such that $H$ is $\PP_1$-colorable and
$\unorient{H}$ is $\PP_s$-colorable for every $s\in [r]\setminus\{1\}$
is $\PP_0$-colorable.
Let $\ell_C$ be the product of the sizes of the sets $C_s$, $s\in [r]$, and
let $R$ be an integer obtained by applying Theorem~\ref{thm:grid} with $n=3$, $r$ and $\ell=|C_0|^{\ell_C}$.
We apply Lemma~\ref{lm:SRL} with
\[\kappa=R^r\mbox{, }m=|C_0|\ell_C\mbox{ and }\varepsilon=\min\left\{\frac{1}{100m},\frac{2}{\kappa^2}\right\}\]
to get $K_0$.
Next choose $N\ge 2K_0$ to be a sufficiently large integer divisible by $R^r$ so that it holds that
\begin{equation}
3^N\cdot \ell_C\cdot e^{-\frac{\varepsilon^2 N^2}{8\ell_C\;K_0^2}}<1.
\end{equation}
We next consider an $N$-vertex complete graph $G$ 
with an equipartition of its vertex set to the sets $U_i$ for $i\in[R]^r$, and
define $r$ linear orders $\preceq_1,\ldots,\preceq_r$ on $V(G)$ such that the following holds:
if $v\in U_{i_1,\ldots,i_r}$, $v'\in U_{i'_1,\ldots,i'_r}$ and $i_q<i'_q$, $q\in [r]$, then $v\preceq_q v'$.
Note that this determines the $r$ orders between some of the parts of the equipartition and $\preceq_1,\ldots,\preceq_r$ are arbitrary linear extensions of the predefined partial orders.
We next color the edges of $G$ uniformly at random with colors from $C_1\times\cdots\times C_r$, and
observe using Chernoff Bound that if $A$ and $B$ are two disjoint subsets of $V(G)$,
each of size at least $\lfloor N/K_0\rfloor$ and
$x\in C_1\times\cdots\times C_r$,
then the probability that the number of edges between $A$ and $B$ colored with $x$ is smaller than $(1-\varepsilon)\frac{|A|\;|B|}{\ell_C}$
is at most \[e^{-\frac{\varepsilon^2|A|\,|B|}{2\ell_C}}\le e^{-\frac{\varepsilon^2N^2}{8\ell_C\;K_0^2}}.\]
Since the number of pairs of disjoints sets $A$ and $B$ with size at least $\lfloor N/K_0\rfloor$ is at most $3^N$,
we conclude using the choice of $N$ that the probability that
there exists a pair of disjoints sets $A$ and $B$, each of size at least $\lfloor N/K_0\rfloor$, such that
the number of edges between $A$ and $B$ colored with any single color $x\in C_1\times\cdots\times C_r$
is smaller than $(1-\varepsilon)\frac{|A|\;|B|}{\ell_C}$
is less than $1$.
Hence, there exists a coloring of the edges of $G$ such that
the number of edges colored with each color $x\in C_1\times\cdots\times C_r$
is at least $(1-\varepsilon)\frac{|A|\;|B|}{\ell_C}$ for any two such disjoint sets $A$ and $B$;
fix such a coloring for the rest of the proof.

Let $H$ be the ordered hypergraph with $V(H)=V(G)$ equipped with the linear order $\preceq_1$ such that
three vertices $v_1$, $v_2$ and $v_3$ form an edge in $H$ iff
for every $q\in [r]$, the edge-coloring of $G^{\preceq_q}$
restricted to the $q$-th coordinates of the colors
is consistent with the palette $\PP_q$ on the vertices $v_1$, $v_2$ and $v_3$,
i.e. $(x,y,z)\in T_q$ where
$x$ is the $q$-th coordinate of the color of the edge $v_{\alpha}v_\beta$,
$y$ is the $q$-th coordinate of the color of the edge $v_{\alpha}v_\gamma$,
$z$ is the $q$-th coordinate of the color of the edge $v_{\beta}v_\gamma$, and
the indices $\alpha,\beta,\gamma\in [3]$ are chosen so that $v_\alpha\prec_q v_\beta\prec_q v_\gamma$.
In particular,
the ordered hypergraph $H$ is $\PP_1$-colorable and
the hypergraph $\unorient{H}$ is $\PP_q$-colorable for every $q\in [r]\setminus\{1\}$.
It follows that the ordered hypergraph $H$ is also $\PP_0$-colorable.
Fix a coloring of the pairs of vertices of $H$ witnessing this and view it as the second edge-coloring of $G$ (the first
edge-coloring is the edge-coloring with the elements from $C_1\times\cdots\times C_r$ fixed in the previous paragraph).

We next apply Lemma~\ref{lm:SRL} to the graph $G$ with the edge-coloring given by the pairs of the colors
from the first and the second edge-colorings constructed earlier,
i.e.~every edge of $G$ is colored with an element of $C_1\times\cdots\times C_r\times C_0$, and
with the equipartition of its vertex set to the sets $U_i$ for $i\in[R]^r$.
Lemma~\ref{lm:SRL} yields
an equipartition $V_1\dot\cup\cdots\dot\cup V_k$, $k\le K_0$
with the properties given in its statement,
in particular, this equipartition
is an equirefinement of $U_{1, \ldots, 1}\dot\cup\cdots\dot\cup U_{R, \ldots, R}$.
Lemma~\ref{lm:clique} yields that
there exist indices $j_i\in [k]$ for $i\in[R]^r$ such that
$V_{j_i}\subseteq U_i$ for every $i\in [R]^r$ and for $i,i'\in[R]^r$, $i\ne i'$ the pair $(V_{j_i},V_{j_{i'}})$ is $\varepsilon$-regular.
Fix such $R^r$ indices.

We now aim at applying Theorem~\ref{thm:grid};
recall that we used the parameters $n=3$, $r$ and $\ell=|C_0|^{\ell_C}$ to get the value of $R$.
Let $\cal F$ denote the set of all functions from $C_1\times\cdots\times C_r$ to $C_0$;
for the purpose of applying Theorem~\ref{thm:grid},
we identify the functions in $\cal F$ with the colors $1,\ldots,\ell$.
For $i,i'\in [R]^r$, $i\ne i'$,
we define $F(i,i')$ to be the function $f\in\cal F$ that
such that $f(x)$ is the most frequent color in the second coloring of the edges between the sets $V_{j_i}$ and $V_{j_{i'}}$ that
are colored with $x$ in the first coloring (ties resolved arbitrarily).
Theorem~\ref{thm:grid} yields that
there exist $3$-element subsets $J_1,\ldots,J_r\subseteq [R]$ and
a function $\Gamma:\{-1,0,1\}^r\to\cal F$ such that
\[F((i_1, \ldots, i_r),(i'_1, \ldots, i'_r))=\Gamma(\sgn(i_1-i'_1),\dots,\sgn(i_r-i'_r))\]
for all distinct $(i_1, \ldots, i_r),(i'_1, \ldots, i'_r)\in J_1\times\cdots\times J_r$.

We are now ready to define a function $h:C_1\times\prod\limits_{s=2}^r\sym{C_s}\to C_0$,
which we will next show to be a homomorphism from $\PP_1\times\prod\limits_{s=2}^r\sym{\PP_s}$ to $\PP_0$.
We will use the following notation:
if $x$ is a color of a palette $\sym{C_i}$, $i\in [s]\setminus\{1\}$,
then $\hat x$ is the corresponding color of the palette $C_i$,
i.e.~the color $x$ is either $\hat x$ or the clone of $\hat x$.
To simplify the notation, we also set $\hat x$ to be $x$ itself for each color $x$ of $C_1$.
Consider $x=(x_1,\ldots,x_r)\in C_1\times\prod\limits_{s=2}^r\sym{C_s}$, and
define $\gamma(x)\in\{-1,+1\}^r$ as
\[\gamma(x)_i=\begin{cases}
              +1 & \mbox{if $x_i=\hat x_i$, and} \\
	      -1 & \mbox{otherwise.}
              \end{cases}\]
Note that $\gamma_1(x)=1$ for every $x\in C_1\times\prod\limits_{s=2}^r\sym{C_s}$.
We set \[h(x)=\Gamma(\gamma(x))(\widehat{x_1},\ldots,\widehat{x_r})\]
for every $x\in C_1\times\prod\limits_{s=2}^r\sym{C_s}$;
note that $\Gamma(\gamma(x))\in\cal F$,
i.e. $\Gamma(\gamma(x))$ is a function from $C_1\times\cdots\times C_r$ to $C_0$.

We need to verify that $h$ is a homomorphism from $\PP_1\times\prod\limits_{s=2}^r\sym{\PP_s}$ to $\PP_0$.
Let $x=(x_1,\ldots,x_r)$, $y=(y_1,\ldots,y_r)$ and $(z_1,\ldots,z_r)$
be any three elements of $C_1\times\prod\limits_{s\in [r]\setminus\{1\}}\sym{C_s}$ such that
$(x,y,z)$ is a feasible triple in the palette $\PP_1\times\prod\limits_{s=2}^r\sym{\PP_s}$.
For every $s\in[r]$, set $i_s$, $i'_s$, and $i''_s$ to be the three elements of $J_s$
listed in an order such that $\gamma(x)_s=\sgn(i'_s-i_s)$, $\gamma(y)_s=\sgn(i''_s-i_s)$, and $\gamma(z)_s=\sgn(i''_s-i'_s)$.
In particular, it holds that
\begin{enumerate}
\item $i_s<i'_s<i''_s$ if $(\gamma_s(x),\gamma_s(y),\gamma_s(z))=(1,1,1)$,
\item $i'_s<i_s<i''_s$ if $(\gamma_s(x),\gamma_s(y),\gamma_s(z))=(-1,1,1)$,
\item $i_s<i''_s<i'_s$ if $(\gamma_s(x),\gamma_s(y),\gamma_s(z))=(1,1,-1)$,
\item $i''_s<i_s<i'_s$ if $(\gamma_s(x),\gamma_s(y),\gamma_s(z))=(1,-1,-1)$,
\item $i'_s<i''_s<i_s$ if $(\gamma_s(x),\gamma_s(y),\gamma_s(z))=(-1,-1,1)$, and
\item $i''_s<i'_s<i_s$ if $(\gamma_s(x),\gamma_s(y),\gamma_s(z))=(-1,-1,-1)$.
\end{enumerate}
Note that $(\gamma_1(x),\gamma_1(y),\gamma_z(1))=(1,1,1)$ and
$(\gamma_s(x),\gamma_s(y),\gamma_s(z))$ for $s\in [r]\setminus\{1\}$ can only take one of the six values listed above
by the definition of the symmetrization of a palette.
Also note that $i,i',i''\in J_1\times\cdots\times J_r$.
Using the definition of the orders $\preceq_s$, $s\in [r]$, and
the definitions of the symmetrizations of the palettes $\sym{\PP_s}$, $s\in [r]\setminus\{1\}$,
we conclude that
\begin{itemize}
\item $i\prec_si'\prec_si''$, $(\widehat{x_s},\widehat{y_s},\widehat{z_s})\in T_s$ in case (1) above,
\item $i'\prec_si\prec_si''$, $(\widehat{x_s},\widehat{z_s},\widehat{y_s})\in T_s$ in case (2),
\item $i\prec_si''\prec_si'$, $(\widehat{y_s},\widehat{x_s},\widehat{z_s})\in T_s$ in case (3),
\item $i''\prec_si\prec_si'$, $(\widehat{y_s},\widehat{z_s},\widehat{x_s})\in T_s$ in case (4),
\item $i'\prec_si''\prec_si$, $(\widehat{z_s},\widehat{x_s},\widehat{y_s})\in T_s$ in case (5), and
\item $i''\prec_si'\prec_si$, $(\widehat{z_s},\widehat{y_s},\widehat{x_s})\in T_s$ in case (6).
\end{itemize}
In particular, using the definition of the ordered hypergraph $H$,
any three vertices $v\in V_{j_i}$, $v'\in V_{j_{i'}}$ and $v''\in V_{j_{i''}}$ such that
the edge $vv'$ is colored with $\hat x$ in the first coloring,
the edge $vv''$ is colored with $\hat y$, and
the edge $v'v''$ is colored with $\hat z$ form an edge in the hypergraph $H$.
Recall that
\begin{itemize}
\item $h(x)$ is the most frequent color in the second edge-coloring
      among the edges between $V_{j_i}$ and $V_{j_{i'}}$ that are colored with $\hat x$ in the first edge-coloring,
\item $h(y)$ is the most frequent color in the second edge-coloring
      among the edges between $V_{j_i}$ and $V_{j_{i''}}$ that are colored with $\hat y$ in the first edge-coloring, and
\item $h(z)$ is the most frequent color in the second edge-coloring
      among the edges between $V_{j_{i'}}$ and $V_{j_{i''}}$ that are colored with $\hat z$ in the first edge-coloring.
\end{itemize}
Lemma~\ref{lm:triangle} now yields that
there exist vertices $v\in V_{j_i}$, $v'\in V_{j_{i'}}$ and $v''\in V_{j_{i''}}$ such that
the edge $vv'$ is colored with $\hat x$ in the first edge-coloring and with $h(x)$ in the second edge-coloring,
the edge $vv''$ is colored with $\hat y$ in the first edge-coloring and with $h(y)$ in the second edge-coloring, and
the edge $v'v''$ is colored with $\hat z$ in the first edge-coloring and with $h(z)$ in the second edge-coloring.
Since the vertices $v$, $v'$ and $v''$ form an edge in the ordered hypergraph $H$ and $v\preceq_1v'\preceq_1v''$,
it holds that the triple $(h(x),h(y),h(z))$ is a feasible triple in $\PP_0$.
We conclude that $h$ is a homomorphism from the $P_1\times\prod\limits_{s=2}^r\sym{\PP_s}$ to the palette $\PP_0$.
\end{proof}

We next prove the version of Lemma~\ref{lm:single} for multiple palettes.

\begin{lemma}
\label{lm:multi}
Let $\PP_1,\ldots,\PP_r$ and $\PP_0$ be $r+1$ palettes.
Suppose that for every $s\in [r]$,
there exist ordered hypergraphs $H^+_s$ and $H^-_s$ such that
\begin{itemize}
\item $H^+_s$ and $H^-_s$ are $\PP_s$-colorable,
\item both $\unorient{H^+_s}$ and $\unorient{H^-_s}$ are $\PP_{s'}$-colorable for every $s'\in [r]\setminus\{s\}$,
\item $H^+_s$ is not $\PP_0$-colorable, and
\item $H^-_s$ is not $\inv(\PP_0)$-colorable.
\end{itemize}
Then, there exists a hypergraph $H$ that is $\PP_s$-colorable for every $s\in [r]$ but not $\PP_0$-colorable.
\end{lemma}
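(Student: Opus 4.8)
The plan is to mimic the random construction in Lemma~\ref{lm:single}, but with the extra orders and the Erd\H os--Szekeres step replaced by its $r$-dimensional analogue (Theorem~\ref{thm:ES-grid}), so that every order on $V(H)$ is forced to look lexicographic on a large subgrid, at which point one of the prescribed ordered hypergraphs $H^\pm_s$ can be embedded. Concretely, fix $n$ to be the maximum number of vertices among all $H^+_s,H^-_s$, and let $R_0$ be the value given by Theorem~\ref{thm:ES-grid} applied with this $n$ and with $r$; set $k=R_0^r$. Choose $N$ divisible by suitable parameters, sufficiently large that $N!\cdot(1-|C_1\times\cdots\times C_r|^{-n^2})^{N^2/k^{O(1)}}<1$. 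Take $V(H)=[N]^r$ with its $r$ coordinate orders $\preceq_1,\dots,\preceq_r$ (arbitrary linear extensions consistent with coordinate comparison), colour every pair of vertices of $H$ uniformly at random by an element of $C_1\times\cdots\times C_r$, and include a triple as an edge iff for every $s\in[r]$ the $s$-th coordinates of the three pair-colours form a feasible triple of $\PP_s$ with respect to $\preceq_s$. This $H$ is visibly $\PP_s$-colourable for every $s$, so it only remains to show that with positive probability no linear order on $V(H)$ witnesses $\PP_0$-colourability.

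First I would fix an arbitrary linear order $\preceq$ on $V(H)$ and build, as in Lemma~\ref{lm:single}, an inclusion-wise maximal family $F_1,\dots,F_m$ of pairwise almost-disjoint sub-blocks, each of the form (a copy of) $[R_0]^r$ inside $[N]^r$, with $m\ge N^2/k^{O(1)}$ many of them, so that the events ``$H^{\preceq}$ restricted to $F_i$ is $\PP_0$-colourable'' are independent across $i$. On each $F_i$, Theorem~\ref{thm:ES-grid} (applied to $\preceq$ restricted to $F_i\cong[R_0]^r$) yields an $n\times\cdots\times n$ subgrid on which $\preceq$ is exactly the lexicographic order given by some permutation $\pi$ of the coordinates and some sign vector $s\in\{-1,+1\}^r$. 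The key point is that on such a subgrid the order $\preceq$ agrees, on each coordinate direction, either with $\preceq_1$ or its reverse (after permuting which direction we call ``first''); in particular $\preceq$ restricted to this subgrid, read off along the $\pi(1)$-st coordinate, is either an increasing or a decreasing sequence with respect to $\preceq_1$. Using that $H^+_s$ is $\PP_s$-colourable (and its unorientation $\PP_{s'}$-colourable for $s'\ne s$, so the coordinates $s'\ne 1$ never create an obstruction regardless of their order), while $H^+_s$ is not $\PP_0$-colourable and $H^-_s$ is not $\inv(\PP_0)$-colourable, one checks that with probability at least $|C_1\times\cdots\times C_r|^{-\binom{n}{2}}\ge |C_1\times\cdots\times C_r|^{-n^2}$ the random colouring plants inside that subgrid a copy of the appropriate one of $H^+_s,\inv(H^-_s)$ (the $+$ version if $s_{\pi(1)}=+1$, the $-$ version otherwise), certifying non-$\PP_0$-colourability of $H^{\preceq}$ restricted to $F_i$.

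Combining, the probability that $H^{\preceq}$ is $\PP_0$-colourable is at most $(1-|C_1\times\cdots\times C_r|^{-n^2})^m$, and a union bound over the $N!$ linear orders $\preceq$, together with the choice of $N$, gives that with positive probability $H$ is not $\PP_0$-colourable. I expect the main obstacle to be the bookkeeping in the paragraph above: one must carefully isolate which coordinate direction of the subgrid plays the role of the ``primary'' order $\preceq_1$ (this is what $\pi$ and $s$ from Theorem~\ref{thm:ES-grid} encode), verify that the remaining $r-1$ coordinates are harmless because the underlying unordered hypergraphs $\unorient{H^\pm_s}$ are $\PP_{s'}$-colourable for all $s'$, and argue that embedding the fixed small ordered hypergraph $H^+_s$ or $\inv(H^-_s)$ into the lexicographically ordered subgrid is possible since the grid order restricted to a single coordinate line is a genuine linear order matching $\preceq_s$ (up to reversal). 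Once this matching of roles is set up correctly, the probabilistic estimates are essentially identical to those in the proof of Lemma~\ref{lm:single}.
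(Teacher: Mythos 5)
Your argument takes a genuinely different route from the paper's. The paper keeps $V(H)=[N]$ one-dimensional, chooses the $r$ auxiliary linear orders $\preceq_1,\dots,\preceq_r$ \emph{uniformly at random}, lets $F_1,\dots,F_m$ be an arbitrary almost-disjoint family of $R^r$-element subsets of $[N]$, indexes each $F_i$ by $[R]^r$ arbitrarily, and then pays a constant factor $1/((R^r)!)^r$ for the probability that this indexing happens to be consistent with all $r$ random orders (the ``good'' sets). You instead take $V(H)=[N]^r$ with deterministic coordinate orders, and restrict the $F_i$ to be genuine sub-blocks $J_1\times\dots\times J_r$, which are automatically aligned with the coordinate structure, so the random-order bookkeeping disappears. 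The downstream part of your sketch --- applying Theorem~\ref{thm:ES-grid} inside each sub-block, reading off the primary coordinate $\rho=\pi(1)$ and the sign, and planting $H^+_\rho$ or $\inv(H^-_\rho)$ at grid points chosen so that the $\rho$-th coordinate matches the vertex order of $H^\pm_\rho$ and the other coordinates match witness orders for $\unorient{H^\pm_\rho}$ --- is the same idea as the paper's and is correct.

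The gap is quantitative, and as written it breaks for $r\ge 2$. Since $|V(H)|=N^r$, the union bound must be over $(N^r)!$ linear orders, not $N!$, and $(N^r)!$ is of order $e^{rN^r\log N}$; a family of only $m\ge N^2/k^{O(1)}$ almost-disjoint sub-blocks cannot defeat this. Moreover, the maximality/packing count for sub-blocks is more delicate than for arbitrary $R^r$-element sets: the dominant obstruction comes from pairs of vertices in $F_i$ that differ in exactly one coordinate (roughly $rR_0^{r+1}$ such pairs, each lying in roughly $\binom{N-1}{R_0-1}^{r-1}\binom{N-2}{R_0-2}$ other sub-blocks), and working this out gives $m\gtrsim N^{r+1}/R_0^{O(r)}$ --- not $N^{2r}$, and not the $N^2$ you quoted. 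Fortunately $N^{r+1}$ does beat $N^r\log N$, so after correcting both the factorial and the packing exponent, your union bound does close; but neither correction appears in your sketch, and the bound you state would be insufficient. The paper avoids the whole issue by staying in one dimension, where the standard packing bound $m\gtrsim N^2/R^{4r}$ cleanly beats $N!$, and moving all the grid structure into the random orders.
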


\begin{proof}
Fix the palettes $\PP_s=(C_s,T_s)$, $s\in [r]\cup\{0\}$, and
the ordered hypergraphs $H^+_s$ and $H^-_s$, $s\in [r]$, as in the statement of the lemma.
Let $c$ be the maximum size of a set $C_i$, $i\in [r]$, and
let $n$ be the maximum number of vertices of one of the hypergraphs $H^+_i$ and $H^-_i$, $i\in [r]$.
Apply Theorem~\ref{thm:ES-grid} with $n$ and $r$
to get $R\in\NN$ with the properties given in the statement of Theorem~\ref{thm:ES-grid}.
Let $N$ be a positive integer such that
\[N!\cdot (1-c^{-rn^2})^{\frac{N^2}{R^{4r}}\cdot\frac{1}{(R^r!)^r}}<1;\]
such $N$ exists since $N!$ is at most $e^{N\log N}$ and the integers $c$, $n$, $r$ and $R$ are fixed.

We will now describe the sought hypergraph $H$, which will have $N$ vertices.
Choose $\preceq_1,\ldots,\preceq_r$ linear orders on $V(H)$ uniformly at random, and
color each pair of vertices of $H$ with an $r$-tuple $(x_1,\ldots,x_r)\in C_1\times\cdots\times C_r$ uniformly at random.
Three vertices $v$, $v'$ and $v''$ form an edge
if the $s$-th coordinates of the colors of their pairs are consistent in the order $\preceq_s$ with the palette $\PP_s$
for every $s\in [r]$;
for example if $v\preceq_2 v''\preceq_2 v'$,
$x_2$ is the second coordinate of the color of the pair $v$ and $v''$,
$y_2$ is the second coordinate of the color of the pair $v$ and $v'$, and
$z_2$ is the second coordinate of the color of the pair $v'$ and $v''$,
then it is required that $(x_2,y_2,z_2)\in T_2$.
This construction ensures that the ordered hypergraph $H^{\preceq_s}$ is $\PP_s$-colorable for every $s\in [r]$, and
so the hypergraph $H$ is $\PP_s$-colorable for all $s\in [r]$.

We will show that the hypergraph $H$ is not $\PP_0$-colorable with positive probability.
Let $F_1,\ldots,F_m$ be any inclusion-wise maximal family of $R^r$-element subsets of $V(H)$ such that
no two sets $F_i$ and $F_j$, $1\le i<j\le m$, have more than a single vertex in common.
Since a single such set blocks less than $\binom{R^r}{2}\binom{N-2}{R^r-2}$ other sets from the inclusion in the family,
the number $m$ of the sets is at least
\[
m\ge\frac{\binom{N}{R^r}}{\binom{R^r}{2}\binom{N-2}{R^r-2}}=\frac{2N(N-1)}{R^{2r}(R^r-1)^2}\ge\frac{N^2}{R^{4r}}.
\]
Index the vertices of each of the sets $F_i$, $i\in [m]$, by $[R]^r$,
i.e., the vertices of $F_i$ are denoted by $v^i_{t_1, \ldots, t_r}$ where $t_1,\ldots,t_r\in [R]$;
the choice of the indexing is arbitrary.
The probability that for a fixed $s\in [r]$,
it holds that $v^i_{t_1, \ldots, t_r}\preceq_s v^i_{t'_1, \ldots, t'_r}$ whenever $t_s<t'_s$,
i.e. the linear order $\preceq_s$ is consistent with the order given by the $s$-th coordinates,
is at least $1/(R^r)!$, and so
the probability that this holds for every $s\in [r]$ is at least $1/(R^r)!^r$.
We refer to such a set $F_i$ as \emph{good},
i.e. a set $F_i$ is good
if the indexing of its vertices by the elements of $[R]^r$
is consistent in its $s$-th coordinate with $\preceq_s$ for every $s\in [r]$.
It follows that the expected number of good sets $F_i$, $i\in [m]$, is at least
\begin{equation}
m_0=\frac{N^2}{R^{4r}}\cdot\frac{1}{(R^r)!^r}.
\label{eq:FnumR0}
\end{equation}
Fix any linear orders $\preceq_1,\ldots,\preceq_r$ on $V(H)$ such that the number of good sets is at least $m_0$ (note that
random linear orders $\preceq_1,\ldots,\preceq_r$ have this property with positive probability).

Fix a good set $F_i$ and a linear order $\preceq$ on the vertices of $H$.
We estimate the probability that the subhypergraph of the ordered hypergraph $H^{\preceq}$ induced by $F_i$
is $\PP_0$-colorable
conditioned on the event that the initial $r$ linear orders on $V(H)$ are $\preceq_1,\ldots,\preceq_r$.
By Theorem~\ref{thm:ES-grid},
there exist $n$-element sets $J_1,\ldots,J_r\subseteq [R]$ such that
the order $\preceq$ is the lexicographic order on $J_1\times\cdots\times J_r$
for a suitable permutation of the indices $1,\ldots,r$ and after possibly reversing some of the orders.
In particular, there exists an index $\rho\in [r]$ such that
one of the following the conclusions holds:
\begin{itemize}
\item any two elements $(t_1,\ldots,t_r),(t'_1,\ldots,t'_r)\in J_1\times\cdots\times J_r$ with $t_{\rho}<t'_{\rho}$
      satisfy that $v^i_{t_1,\ldots,t_r}\preceq v^i_{t'_1,\ldots,t'_r}$, or
\item any two elements $(t_1,\ldots,t_r),(t'_1,\ldots,t'_r)\in J_1\times\cdots\times J_r$ with $t_{\rho}<t'_{\rho}$
      satisfy that $v^i_{t_1,\ldots,t_r}\succeq v^i_{t'_1,\ldots,t'_r}$.
\end{itemize}
In other words,
the order $\preceq$ is consistent with either the order given by the $\rho$-th coordinate
when restricted to the subset of $F_i$ indexed by $J_1\times\cdots\times J_r$
$\preceq$ is consistent with the reverse of this order.
By symmetry (to avoid unnecessarily complex notation),
we assume that $\rho=1$ and that the order $\preceq$ is consistent with the order given by the first coordinate.

Consider the ordered hypergraph $H^+_1$ with the properties given in the statement of the lemma.
Choose $t^w\in\prod\limits_{s=1}^rJ_s$ for each $w\in V(H^+_1)$ such that for every $s\in[r]$, the $s$-th coordinates induce a linear order on the $r$-tuples chosen (i.e., without ties) and for $s=1$, this order  
is consistent with the order of the vertex set of $H^+_1$, while for $s>1$ the order given by the $s$-th coordinates is consistent with an order on the vertex set of $\unorient{H^+_1}$ that
witnesses that $\unorient{H^+_1}$ is $\PP_j$-colorable.
Since the order given by the $s$-th coordinates is consistent with $\preceq_s$ for every $s\in [r]$,
there exists a coloring of the pairs of the vertices $v^i_{t^w}$, $w\in V(H^+_1)$, such that
a copy of $H^+_1$ is contained on the vertices $v_{t^w}$, $w\in V(H^+_1)$.
Note that this happens with probability at least
\[\prod_{j=1}^r |C_j|^{-\binom{|V(H^+_1)|}{2}}\ge \prod_{j=1}^r |C_j|^{-n^2} \ge c^{-rn^2},\]
conditioned on the initial $r$ linear orders being $\preceq_1,\ldots,\preceq_r$.
In particular, conditioned on the initial $r$ linear orders being $\preceq_1,\ldots,\preceq_r$,
the restriction of $H^{\preceq}$ to $F_i$ is $\PP_0$-colorable with probability at most $1-c^{-rn^2}$.

Since no two sets $F_i$ and $F_{i'}$, $1\le i<i'\le m$, have more than a single vertex in common,
the events that
the restriction of the ordered hypergraph $H^{\preceq}$ to a good set $F_i$ is $\PP_0$-colorable
are mutually independent. Here we use that the orders $\preceq_s$ are fixed for $s\in[r]$, if they were not fixed they would introduce dependencies.
Hence, using \eqref{eq:FnumR0}, we obtain that for any fixed linear order $\preceq$ on the vertex set, the probability that
the ordered hypergraph $H^{\preceq}$ is $\PP_0$-colorable,
conditioned on the initial $r$ linear orders being $\preceq_1,\ldots,\preceq_r$,
is at most
\[(1-c^{-rn^2})^{\frac{N^2}{R^{4r}}\cdot\frac{1}{(R^r!)^r}}.\]
It follows that the probability that $H$ is $\PP_0$-colorable,
i.e. there exists a linear order $\preceq$ such that $H^{\preceq}$ is $\PP_0$-colorable,
conditioned on the initial $r$ linear orders being $\preceq_1,\ldots,\preceq_r$,
is at most
\[N!\cdot (1-c^{-rn^2})^{\frac{N^2}{R^{4r}}\cdot\frac{1}{(R^r!)^r}},\]
which is less than one by the choice of $N$.
We conclude that the hypergraph $H$ is not $\PP_0$-colorable with positive probability.
\end{proof}

We are now ready to prove our main theorem.

\begin{theorem}
\label{thm:multi}
Let $\PP_1,\ldots,\PP_r$ and $\PP_0$ be $r+1$ palettes.
There exists a hypergraph $H$ that is $\PP_s$-colorable for every $s\in [r]$ but not $\PP_0$-colorable
if and only if
for every $q\in [r]$,
there is no homomorphism from the palette $\PP_q\times\prod\limits_{s\in [r]\setminus\{q\}}\sym{\PP}_s$ to $\PP_0$ or
to $\inv(\PP_0)$.
\end{theorem}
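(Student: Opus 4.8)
The plan is to prove Theorem~\ref{thm:multi} by combining the two lemmas just established, mirroring the structure of the proof of Theorem~\ref{thm:single}. First I would dispose of the ``only if'' direction by contrapositive: suppose that for some $q\in[r]$ there \emph{is} a homomorphism from $\PP_q\times\prod_{s\in[r]\setminus\{q\}}\sym{\PP_s}$ to $\PP_0$ or to $\inv(\PP_0)$; I must then show every hypergraph $H$ that is $\PP_s$-colorable for all $s\in[r]$ is automatically $\PP_0$-colorable. The key observation is that if $H$ is $\PP_q$-colorable, we may fix an order $\preceq$ on $V(H)$ witnessing this, and then for each $s\ne q$ the hypergraph $\unorient H$ is $\PP_s$-colorable, hence $H^\preceq$ (with this \emph{fixed} order) is $\sym{\PP_s}$-colorable---this is the point of the symmetrization operation: $\sym{\PP_s}$ absorbs an arbitrary reordering by cloning colors on backward edges, so an unordered $\PP_s$-coloring can be read off in \emph{any} order once we pass to $\sym{\PP_s}$. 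Consequently $H^\preceq$ is $\left(\PP_q\times\prod_{s\ne q}\sym{\PP_s}\right)$-colorable (using that a product-colorability of an ordered hypergraph is equivalent to simultaneous colorability by each factor), and then Proposition~\ref{prop:hom} pushes this through the homomorphism to give a $\PP_0$-coloring of $H^\preceq$, or---if the homomorphism was to $\inv(\PP_0)$---a $\inv(\PP_0)$-coloring of $H^\preceq$ and hence a $\PP_0$-coloring of $\inv(H^\preceq)$, which still certifies $\PP_0$-colorability of the unordered $H$.

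For the ``if'' direction I assume that for every $q\in[r]$ there is no homomorphism from $\PP_q\times\prod_{s\ne q}\sym{\PP_s}$ to $\PP_0$ and none to $\inv(\PP_0)$, and I need to produce an $H$ that is $\PP_s$-colorable for all $s$ but not $\PP_0$-colorable. The strategy is to feed Lemma~\ref{lm:multi}: for each $s\in[r]$ I must exhibit ordered hypergraphs $H^+_s$ and $H^-_s$ that are $\PP_s$-colorable, whose underlying unordered hypergraphs are $\PP_{s'}$-colorable for all $s'\ne s$, with $H^+_s$ not $\PP_0$-colorable and $H^-_s$ not $\inv(\PP_0)$-colorable. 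To get $H^+_s$, apply the contrapositive of Lemma~\ref{lm:existm} with the roles of the palettes permuted so that $\PP_s$ plays the role of $\PP_1$: the nonexistence of a homomorphism from $\PP_s\times\prod_{s'\ne s}\sym{\PP_{s'}}$ to $\PP_0$ means the hypothesis of Lemma~\ref{lm:existm} fails, i.e.\ there \emph{is} an ordered hypergraph $H$ with $H$ being $\PP_s$-colorable, $\unorient H$ being $\PP_{s'}$-colorable for all $s'\ne s$, and $H$ \emph{not} $\PP_0$-colorable---this is exactly $H^+_s$. For $H^-_s$, run the same argument but with $\inv(\PP_0)$ in place of $\PP_0$, using the assumed nonexistence of a homomorphism to $\inv(\PP_0)$; the output ordered hypergraph is not $\inv(\PP_0)$-colorable, giving $H^-_s$. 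With all $2r$ auxiliary hypergraphs in hand, Lemma~\ref{lm:multi} directly yields the desired $H$.

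The main obstacle I anticipate is not in the logical scaffolding---which is a clean bookkeeping exercise once the two lemmas are available---but in making sure the symmetrization bookkeeping in the ``only if'' direction is airtight: precisely, one must check that an unordered $\PP_s$-coloring, when the vertices are relisted according to a foreign order $\preceq$, produces a legitimate $\sym{\PP_s}$-coloring of $H^\preceq$, i.e.\ that for each edge the six-fold symmetrized triple family in $\sym T_s$ genuinely contains the triple obtained by recoloring backward edges with their clones, in \emph{every} of the six orderings of the three vertices relative to the original $\PP_s$-witnessing order. This is exactly the content encoded in the definition of $\sym{T_s}$ (Figure~\ref{fig:sym}) and in the case analysis (1)--(6) appearing in the proof of Lemma~\ref{lm:existm}, so I would either invoke that case analysis or state it as a small standalone observation: \emph{if $\unorient H$ is $\PP$-colorable then $H'$ is $\sym\PP$-colorable for every ordered hypergraph $H'$ with $\unorient{H'}=\unorient H$.} Everything else (the product/colorability equivalences, applying Proposition~\ref{prop:hom}, reversing orders via $\inv$) is routine, and the permutation of palette indices needed to invoke Lemma~\ref{lm:existm} with $\PP_s$ in the distinguished slot is harmless since the product $\prod_{s'\ne s}\sym{\PP_{s'}}$ is symmetric in its factors up to isomorphism of palettes.
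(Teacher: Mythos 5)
Your proposal is correct and follows essentially the same route as the paper's proof: the ``only if'' direction via the observation that an unordered $\PP_s$-coloring read off in the order $\preceq_q$ (with clones on backward pairs) gives a $\sym{\PP_s}$-coloring of $H^{\preceq_q}$, followed by Proposition~\ref{prop:hom}; and the ``if'' direction by feeding the contrapositive of Lemma~\ref{lm:existm} (applied with $\PP_s$ in the distinguished slot, and with $\PP_0$ resp.\ $\inv(\PP_0)$ as target) into Lemma~\ref{lm:multi}. No gaps.
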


\begin{proof}
Fix the palettes $\PP_1=(C_1,T_1),\ldots,\PP_r=(C_r,T_r)$ and $\PP_0=(C_0,T_0)$.
We start with proving the ``only if'' direction by proving its contrapositive.
Suppose that there exists $q\in [r]$ such that
there is a homomorphism from the palette $\PP_q\times\prod\limits_{s\in [r]\setminus\{q\}}\sym{\PP}_s$ to the palette $\PP_0$ or to $\inv(\PP_0)$.
We write $\PP$ for the palette $\PP_q\times\prod\limits_{s\in [r]\setminus\{q\}}\sym{\PP}_s$ in the rest.

Consider a hypergraph $H$ that is $\PP_s$-colorable for every $s\in[r]$.
We show that $H$ is also $\PP$-colorable.
For every $s\in [r]$,
fix a vertex order $\preceq_s$ such that the ordered hypergraph $H^{\preceq_s}$ is $\PP_s$-colorable, and
let $c_s:\binom{V(H)}{2}\to C_s$ be the coloring of the pairs of vertices of $H$ witnessing this.
The ordered hypergraph $H^{\preceq_q}$ is $\sym{\PP}_s$-colorable for every $s\in [r]\setminus\{q\}$:
if $v$ and $v'$ are two vertices of $H$ with $v\preceq_q v'$,
color the pair $v$ and $v'$ with the color $c_s(\{v,v'\})$ if $v\preceq_s v'$, and
color the pair $v$ and $v'$ with the clone of the color $c_s(\{v,v'\})$ otherwise, and
observe that the obtained coloring witnesses that the ordered hypergraph $H^{\preceq_q}$ is $\sym{\PP}_s$-colorable.
Proposition~\ref{prop:hom} now yields that $H$ is $\PP$-colorable.

It remains to show the ``if'' part of the equivalence in the statement of the theorem.
Assume that there is no homomorphism from the palette $\PP_q\times\prod\limits_{s\in [r]\setminus\{q\}}\sym{\PP_s}$ to $\PP_0$ or to $\inv(\PP_0)$ for any $q\in[r]$.
Lemma~\ref{lm:existm} implies that
there exist ordered hypergraphs $H^+_q$ and $H^-_q$ such that
\begin{itemize}
\item $H^+_q$ and $H^-_q$ are $\PP_q$-colorable,
\item both $\unorient{H^+_q}$ and $\unorient{H^-_q}$ are $\PP_s$-colorable for every $s\in [r]\setminus\{q\}$,
\item $H^+_q$ is not $\PP_0$-colorable, and
\item $H^-_q$ is not $\inv(\PP_0)$-colorable.
\end{itemize}
The existence of a hypergraph that is $\PP_s$-colorable for every $s\in [r]$ but not $\PP_0$-colorable
now follows from Lemma~\ref{lm:multi}.
\end{proof}

Theorem~\ref{thm:multi} immediately yields the following.

\begin{corollary}
\label{cor:multi}
Let $\PP_1,\ldots,\PP_r$ and $\PP'_1,\ldots,\PP'_{r'}$ be $r+r'$ palettes.
There exists a hypergraph $H$ that is $\PP_s$-colorable for every $s\in [r]$ but not $\PP'_s$-colorable for any $s\in [r']$
if and only if
for every $q\in [r]$ and $q'\in [r']$,
there is no homomorphism from the palette $\PP_q\times\prod\limits_{s\in [r]\setminus\{q\}}\sym{\PP}_s$ to $\PP'_{q'}$ or
to $\inv(\PP'_{q'})$.
\end{corollary}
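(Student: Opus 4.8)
The plan is to derive the corollary directly from Theorem~\ref{thm:multi} by applying it once for each forbidden palette $\PP'_{q'}$, and then to glue the resulting hypergraphs together by a disjoint union. No new regularity or Ramsey arguments are needed.

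For the ``only if'' direction I would argue by contraposition. Suppose there exist $q\in[r]$ and $q'\in[r']$ together with a homomorphism from $\PP_q\times\prod_{s\in[r]\setminus\{q\}}\sym{\PP}_s$ to $\PP'_{q'}$ or to $\inv(\PP'_{q'})$. Applying the ``only if'' part of Theorem~\ref{thm:multi} with $\PP_0:=\PP'_{q'}$, the homomorphism condition there fails, hence its conclusion fails: every hypergraph that is $\PP_s$-colorable for all $s\in[r]$ is also $\PP'_{q'}$-colorable. In particular, no hypergraph can simultaneously be $\PP_s$-colorable for every $s\in[r]$ and fail to be $\PP'_{s'}$-colorable for every $s'\in[r']$, since such a hypergraph would in particular have to fail to be $\PP'_{q'}$-colorable.

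For the ``if'' direction, assume that for every $q\in[r]$ and every $q'\in[r']$ there is no homomorphism from $\PP_q\times\prod_{s\in[r]\setminus\{q\}}\sym{\PP}_s$ to $\PP'_{q'}$ or to $\inv(\PP'_{q'})$. Fixing $q'\in[r']$, this is exactly the hypothesis of Theorem~\ref{thm:multi} with $\PP_0:=\PP'_{q'}$, so there is a hypergraph $H_{q'}$ that is $\PP_s$-colorable for every $s\in[r]$ but not $\PP'_{q'}$-colorable. I would then take $H$ to be the disjoint union $H_1\dot\cup\cdots\dot\cup H_{r'}$, with pairwise disjoint vertex sets. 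Since a disjoint union has no edges joining distinct components, $H$ inherits $\PP_s$-colorability from its components: order $V(H)$ so that all of $V(H_1)$ precedes all of $V(H_2)$, and so on, reuse inside each component the coloring witnessing its $\PP_s$-colorability, and color the cross-pairs arbitrarily (no constraint applies to them); this witnesses that $H$ is $\PP_s$-colorable, for every $s\in[r]$. Conversely, each $H_{q'}$ is a subhypergraph of $H$, and the restriction of any coloring and vertex order witnessing $\PP'_{q'}$-colorability of $H$ to $V(H_{q'})$ would witness $\PP'_{q'}$-colorability of $H_{q'}$; as $H_{q'}$ is not $\PP'_{q'}$-colorable, neither is $H$. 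Hence $H$ has all the required properties.

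The whole argument is essentially bookkeeping on top of Theorem~\ref{thm:multi}, and the only point that requires a (routine) check is the behaviour of colorability under disjoint unions --- namely that $\PP$-colorability passes from the components to the disjoint union, while non-$\PP$-colorability of a single component already obstructs $\PP$-colorability of the whole. I therefore do not expect any genuine obstacle here beyond keeping the quantifiers over $q$, $q'$, $s$ and $s'$ straight.
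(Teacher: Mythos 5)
Your proposal is correct and follows essentially the same route as the paper: apply Theorem~\ref{thm:multi} with $\PP_0=\PP'_{q'}$ for each $q'$ (contrapositively for the ``only if'' direction) and take the disjoint union of the resulting hypergraphs $H_{q'}$ for the ``if'' direction. The additional check you spell out about colorability under disjoint unions is exactly the routine verification the paper leaves implicit.
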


\begin{proof}
We prove the equivalence as two implications.
The ``only if'' part is proven by its contrapositive:
if there exist $q\in [r]$ and $q'\in [r']$ such that
there is a homomorphism from the palette $\PP_q\times\prod\limits_{s\in [r]\setminus\{q\}}\sym{\PP}_s$ to the palette $\PP'_{q'}$ or to $\inv(\PP'_{q'})$,
then every hypergraph $H$ that is $\PP_s$-colorable for all $s\in [r]$ is also $\PP'_{q'}$-colorable by Theorem~\ref{thm:multi}.

We next show the ``if'' part of the equivalence.
For each $q'\in [r']$,
Theorem~\ref{thm:multi} applied with the palettes $\PP_1,\ldots,\PP_r$ and $\PP_0=\PP'_{q'}$
yields that there exists a hypergraph $H_{q'}$ that $\PP_s$-colorable for all $s\in [r]$ but not $\PP'_{q'}$-colorable.
The disjoint union of the hypergraphs $H_1,\ldots,H_{r'}$ is a hypergraph that
is $\PP_s$-colorable for all $s\in [r]$ but not $\PP'_{q'}$-colorable for any $q'\in [r']$.
\end{proof}

\section{Example application}
\label{sec:example}

In this section, we show that there exists a hypergraph with uniform Tur\'an density equal to $4/81$.
We fix three special palettes; see Figures~\ref{fig:PLM}--\ref{fig:P481} for visualizations of the palettes.
The first palette, which is denoted by $\PP_{\rm LM}$, has a single color that is both left and middle.
The set of colors of $\PP_{\rm LM}$ is the set $\{\alpha,\beta',\gamma,\gamma',\omega\}$ and
the palette has two feasible triples: $(\alpha,\omega,\gamma)$ and $(\omega,\beta',\gamma')$.
The second palette, which is denoted by $\PP_{\rm 3T}$ is a chain of three triples.
The set colors of $\PP_{\rm 3T}$ is the set $\{\alpha,\beta,\beta',\beta'',\gamma'',\omega,\omega'\}$ and
the palette has three feasible triples: $(\alpha,\beta,\omega)$, $(\omega,\beta',\omega')$ and $(\omega',\beta'',\gamma'')$.
The final palette, which is denoted by $\PP_{4/81}$, is a compact version of a chain of two triples.
The set colors of $\PP_{4/81}$ is the set $\{\alpha,\beta,\gamma,\omega\}$ and
the palette has three feasible triples: $(\alpha,\beta,\gamma)$, $(\alpha,\beta,\omega)$ and $(\omega,\beta,\gamma)$.

\begin{figure}
\begin{center}
\epsfbox{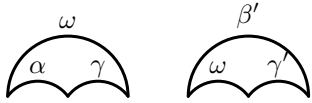}
\end{center}
\caption{The feasible triples of the palette $\PP_{\rm LM}$.}
\label{fig:PLM}
\end{figure}

\begin{figure}
\begin{center}
\epsfbox{hompal-4.mps}
\end{center}
\caption{The feasible triples of the palette $\PP_{\rm 3T}$.}
\label{fig:P3T}
\end{figure}

\begin{figure}
\begin{center}
\epsfbox{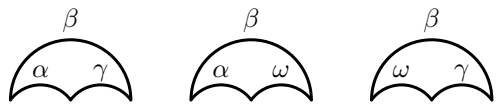}
\end{center}
\caption{The feasible triples of the palette $\PP_{4/81}$.}
\label{fig:P481}
\end{figure}

We first establish three simple lemmas,
starting with showing that
any palette with density larger than $4/81$ admits a homomorphism from $\PP_{\rm LM}$ or $\PP_{\rm 3T}$.

\begin{lemma}
\label{lm:above481}
Let $\PP$ be a palette with $d(\PP)>4/81$.
Then, there exists a homomorphism from $\PP_{\rm LM}$ to $\PP$ or to $\inv(\PP)$, or
there exists a homomorphism from $\PP_{\rm 3T}$ to $\PP$.
\end{lemma}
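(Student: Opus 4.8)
The plan is to prove the contrapositive: assuming there is no homomorphism from $\PP_{\rm LM}$ to $\PP$ or to $\inv(\PP)$, and no homomorphism from $\PP_{\rm 3T}$ to $\PP$, I will show $d(\PP)\le 4/81$. The first move is to restate the two hypotheses as combinatorial conditions on the color classes of $\PP=(C,T)$. Write $M$ for the set of colors occurring as the middle color of some feasible triple, $L$ for those occurring as a left color, $R$ for those occurring as a right color, and $E=L\cup R$. The only non-trivial color of $\PP_{\rm LM}$ is the one that must be simultaneously a middle color and a left color, so a homomorphism $\PP_{\rm LM}\to\PP$ exists exactly when $M\cap L\neq\emptyset$; since $\inv(\PP)$ has the same middle colors and has $R$ as its set of left colors, a homomorphism $\PP_{\rm LM}\to\inv(\PP)$ exists exactly when $M\cap R\neq\emptyset$. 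Hence the first hypothesis is equivalent to $M\cap E=\emptyset$. For the second hypothesis, introduce the directed graph $D$ on vertex set $C$ with an arc $x\to z$ whenever $(x,y,z)\in T$ for some color $y$; chaining three feasible triples is precisely a directed walk $v_0\to v_1\to v_2\to v_3$ in $D$, so a homomorphism $\PP_{\rm 3T}\to\PP$ exists iff $D$ contains a directed walk using three arcs, and the second hypothesis says no such walk exists.

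Next I would extract the structure forced by the absence of a three-arc walk in $D$. Any directed cycle, a loop included, yields walks of all lengths, so $D$ must be acyclic; and in an acyclic digraph a longest walk is a longest directed path, so every directed path of $D$ uses at most two arcs. Every arc of $D$ has both endpoints in $E$ (its tail is a left color, its head a right color), so assigning to each color of $E$ the number of arcs on a longest directed path of $D$ ending at it gives a function $E\to\{0,1,2\}$, and the corresponding partition of $E$ into classes $A_0$, $A_1$, $A_2$ has the property that every arc of $D$ runs from $A_i$ to $A_j$ for some $i<j$. By the first hypothesis, $M$ is disjoint from $E=A_0\cup A_1\cup A_2$.

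The counting and optimization then finish the argument. Each feasible triple $(x,y,z)$ has $y\in M$ and yields the arc $x\to z$ of $D$, while for a fixed arc there are at most $|M|$ admissible choices of $y$; since the arcs of $D$ lie among the ordered cross-pairs of the three classes, $|T|\le |M|\bigl(|A_0||A_1|+|A_0||A_2|+|A_1||A_2|\bigr)$. Writing $t=|M|$ and $s=|A_0|+|A_1|+|A_2|$ and using $|A_0||A_1|+|A_0||A_2|+|A_1||A_2|\le s^2/3$ together with $|C|\ge t+s$ (the sets $M,A_0,A_1,A_2$ are pairwise disjoint), we get $d(\PP)=|T|/|C|^3\le \frac{ts^2/3}{(t+s)^3}$. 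This expression is scale-invariant, so normalizing to $t+s=1$ reduces it to maximizing $(1-s)s^2/3$ over $s\in[0,1]$, whose maximum is attained at $s=2/3$ and equals $4/81$. Thus $d(\PP)\le 4/81$, which contradicts $d(\PP)>4/81$ and proves the lemma.

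The step I expect to be the main obstacle is getting the two translations exactly right, and in particular recognizing that it is the absence of a three-arc \emph{walk} — rather than merely a three-arc path — in $D$ that simultaneously forces $D$ to be acyclic and gives it the clean three-layer structure, and that pairing $\PP_{\rm LM}$ with its inverse is precisely what rules out a color being both a middle color and an extreme color. The remaining steps — the layered decomposition of a shallow acyclic digraph, the crude count of triples, and the one-variable optimization — are routine, but the optimization must be carried out carefully so that the bound lands exactly at $4/81$ rather than at a weaker constant.
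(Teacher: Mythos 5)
Your proof is correct and follows essentially the same route as the paper: argue the contrapositive, use the two $\PP_{\rm LM}$ conditions to make the middle colors disjoint from the left/right colors, split the extreme colors into three classes so that each feasible triple's (left, right) pair straddles two distinct classes, and bound $|T|\le |M|\bigl(ab+ac+bc\bigr)\le |M|(|C|-|M|)^2/3\le 4|C|^3/81$. The only difference is presentational: you encode the ``no chain of three triples'' condition as the absence of a three-arc walk in an auxiliary digraph and layer it by longest-path length, whereas the paper partitions the extremes directly into left-only, right-only, and both; the resulting count and optimization are identical.
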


\begin{proof}
We prove the implication given in the statement of the lemma by contrapositive.
Fix a palette $\PP=(C,T)$ that such that
there is no homomorphism from $\PP_{\rm LM}$ to $\PP$ or to $\inv(\PP)$ and
there is no homomorphism from $\PP_{\rm 3T}$ to $\PP$;
our aim is to show that $d(\PP)\le 4/81$.

Let $M\subseteq C$ be the set of the colors that are the middle color in at least one feasible triple.
Since there is no homomorphism from $\PP_{\rm LM}$ to $\PP$,
none of the colors in $M$ is the left color in any of the feasible triples.
Similarly, since there is no homomorphism from $\PP_{\rm LM}$ to $\inv(\PP)$,
none of the colors in $M$ is the right color in any of the feasible triples.

Let $L\subseteq C$ be the set of colors that are contained as the left color in at least one feasible triple
but are not contained as the right color in any of the feasible triples,
let $R\subseteq C$ be the set of colors that are contained as the right color in at least one feasible triple
but are not contained as the left color in any of the feasible triples, and
let $B\subseteq C$  be the set of colors that are contained as the left color in at least one feasible triple and
as the right color in at least one feasible triple.
The sets $L$, $R$ and $B$ are mutually disjoint by their definition, and
they are also disjoint from the set $M$ 
since each colors in $M$ is contained as the middle color only in any of the feasible triples.

Observe that there is no feasible triple such that both its left and right color belongs to $B$;
if such a feasible triple existed, then there would be a homomorphism from $\PP_{\rm 3T}$ to $\PP$.
It follows that if $(x,y,z)\in T$, then $x\in L\cup B$, $y\in M$ and $z\in R\cup B$
but $x$ and $z$ are not simultaneously from $B$,
i.e.
\[T\subseteq (L\times M\times R)\cup (L\times M\times B)\cup (B\times M\times R).\]
We can now bound the size of $T$ as follows:
\begin{align*}
|T| & \le |L|\cdot|M|\cdot |R|+|L|\cdot|M|\cdot |B|+|B|\cdot|M|\cdot |R| \\
    & = |M|\left(|L|\cdot |R|+|L|\cdot |B|+|B|\cdot |R|\right) \\
    & \le |M|\cdot 3\cdot \left(\frac{|C|-|M|}{3}\right)^2 \\
    & \le \frac{4|C|^3}{81}.
\end{align*}    
We conclude that the density of the palette $\PP$ is at most $4/81$.
\end{proof}

We next show that the Lagrangian of the palette $\PP_{4/81}$ is equal to $4/81$;
in a certain sense, the argument presented in the proof of Lemma~\ref{lm:Lagrangian481}
illustrates that the palette $\PP_{4/81}$ is extremal in the sense of avoiding
a homomorphism form the palette $\PP_{\rm LM}$ or the palette $\PP_{\rm 3T}$.

\begin{lemma}
\label{lm:Lagrangian481}
The Lagrangian of the palette $\PP_{4/81}$ is equal to $4/81$.
\end{lemma}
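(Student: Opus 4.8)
The plan is to evaluate the Lagrangian directly. Write $a=p(\alpha)$, $b=p(\beta)$, $c=p(\gamma)$ and $w=p(\omega)$; these are nonnegative reals with $a+b+c+w=1$, and since the feasible triples of $\PP_{4/81}$ are $(\alpha,\beta,\gamma)$, $(\alpha,\beta,\omega)$ and $(\omega,\beta,\gamma)$, the quantity to be maximized (over all probability distributions $p$) is
\[\sum_{(x,y,z)\in T}p(x)p(y)p(z)=abc+abw+wbc=b\,(ac+aw+cw),\]
where $T$ denotes the set of feasible triples of $\PP_{4/81}$.

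First I would bound $ac+aw+cw$ using that $a+c+w=1-b$: for nonnegative reals $x,y,z$ one has $xy+xz+yz\le\frac13(x+y+z)^2$ (this is $\tfrac12\sum(x-y)^2\ge 0$ after rearranging), so $ac+aw+cw\le\frac{(1-b)^2}{3}$ and hence the objective is at most $\frac{b(1-b)^2}{3}$. Next I would maximize the single-variable function $b\mapsto b(1-b)^2$ on $[0,1]$: applying the AM--GM inequality to the three nonnegative numbers $2b$, $1-b$, $1-b$, whose sum is $2$, gives $2b(1-b)^2\le(2/3)^3=8/27$, so $b(1-b)^2\le 4/27$ and therefore $L(\PP_{4/81})\le 4/81$.

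Finally I would check that this bound is attained. Taking $p(\beta)=1/3$ and $p(\alpha)=p(\gamma)=p(\omega)=2/9$ makes both inequalities tight---the first because the three variables $a,c,w$ are equal, the second because $2b=1-b$---and a direct evaluation gives $\sum_{(x,y,z)\in T}p(x)p(y)p(z)=3\cdot\frac29\cdot\frac13\cdot\frac29=\frac{4}{81}$. Combining the two bounds yields $L(\PP_{4/81})=4/81$. The argument is entirely elementary; the only point deserving a moment's care is the justification that $ac+aw+cw$ is maximized, for a fixed value of $a+c+w$, exactly when the three variables are equal, which is precisely the inequality $\sum x^2\ge\sum xy$ used above.
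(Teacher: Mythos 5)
Your proof is correct and follows essentially the same route as the paper: reduce the objective to $b(ac+aw+cw)$, bound the symmetric sum by $\frac{(1-b)^2}{3}$, and check attainment at $b=1/3$, $a=c=w=2/9$. The only difference is that you explicitly justify $b(1-b)^2\le 4/27$ via AM--GM, a step the paper states without proof.
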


\begin{proof}
Observe that the Lagrangian of $\PP_{4/81}$ is the maximum $abc+abd+dbc$
taken over all choices of non-negative $a$, $b$, $c$ and $d$ such that $a+b+c+d=1$,
where $a$ represents the probability of $\alpha$, $b$ that of $\beta$, $c$ that of $\gamma$ and $d$ that of $\omega$.
We now give an upper bound on the value of $abc+abd+dbc$:
\[abc+abd+dbc=b(ac+ad+cd)\le b\cdot 3\cdot \left(\frac{1-b}{3}\right)^2=\frac{b(1-b)^2}{3}\le\frac{4}{81}.\]
It follows that the Lagrangian of $\PP_{4/81}$ is at most $4/81$, and
choosing $a=c=d=2/9$ and $b=1/3$ shows that this upper bound is attained.
\end{proof}

We now state and prove the last lemma that is needed to prove Theorem~\ref{thm:481}.

\begin{lemma}
\label{lm:nohom481}
Neither the palette $\PP_{\rm LM}\times\sym{\PP_{\rm 3T}}$ nor the palette $\PP_{\rm 3T}\times\sym{\PP_{\rm LM}}$
has a homomorphism to the palette $\PP_{4/81}$.
\end{lemma}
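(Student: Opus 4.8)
The plan is to show directly that any homomorphism to $\PP_{4/81}$ from either of the two product palettes would force an impossible configuration of colors, exploiting the structural rigidity of $\PP_{4/81}$. The key observation about $\PP_{4/81}$ is that its middle color is always $\beta$, and $\beta$ never appears as a left or right color in any feasible triple; moreover $\omega$ appears as left color only in $(\alpha,\beta,\omega)$ and as right color only in $(\omega,\beta,\gamma)$, so $\omega$ cannot be simultaneously forced to the left of one triple and the right of another that share the same ``chain'' structure. Thus a homomorphism $h$ to $\PP_{4/81}$ essentially imposes: every color that ever appears in the middle must map to $\beta$, and one cannot have a feasible triple whose left and right colors are both sent to $\{\omega\}\cup\{\text{something needing to be both sides}\}$ — more precisely, no two feasible triples can be chained through $\omega$.

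First I would analyze the structure of the two source palettes. For $\PP_{\rm LM}\times\sym{\PP_{\rm 3T}}$: the color $\omega$ of $\PP_{\rm LM}$ is both a left color (in $(\omega,\beta',\gamma')$) and a middle color (in $(\alpha,\omega,\gamma)$). Pairing this with the symmetrization $\sym{\PP_{\rm 3T}}$, which by construction contains, for the chain $(\alpha,\beta,\omega),(\omega,\beta',\omega'),(\omega',\beta'',\gamma'')$, all six reorderings of each triple, I would locate two feasible triples of the product palette that share a coordinate behaving like a middle color in one and a non-middle color in the other. Since $h$ must send every middle-appearing color of the product to $\beta$, but $\beta$ is never a left or right color of $\PP_{4/81}$, I would derive a contradiction: some color is forced to equal $\beta$ (because it sits in a middle slot of some feasible triple) yet also must be the image of a left or right slot of another feasible triple, which $\beta$ cannot fill. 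The symmetrization is crucial here because it multiplies the number of feasible triples sixfold and thereby creates unavoidable overlaps in which slots each color occupies.

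For $\PP_{\rm 3T}\times\sym{\PP_{\rm LM}}$ I would argue symmetrically but using the chaining obstruction instead: $\PP_{\rm 3T}$ genuinely contains a length-three chain, and after taking the product with $\sym{\PP_{\rm LM}}$ one still has two feasible triples $(a,b,c)$ and $(c,b',c')$ sharing the color $c$ as right color of one and left color of the other. Under $h$, the triple $(a,b,c)$ forces $h(c)\in\{\gamma,\omega\}$ (the possible right colors) while $(c,b',c')$ forces $h(c)\in\{\alpha,\omega\}$ (the possible left colors), so $h(c)=\omega$; but then $(a,b,c)\mapsto(h(a),\beta,\omega)$ must be feasible, forcing $h(a)=\alpha$, and $(c,b',c')\mapsto(\omega,\beta,h(c'))$ feasible forces $h(c')=\gamma$ — and now the third triple of the chain, $(c',b'',c'')$, maps with left color $\gamma$, which is not a left color of $\PP_{4/81}$, a contradiction. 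Again one must check that the symmetrization factor $\sym{\PP_{\rm LM}}$ does not supply any escape route, i.e. that the relevant coordinates of the $\PP_{\rm LM}$-part are still pinned down (this is where one uses that $\PP_{\rm LM}$ has a color that is both left and middle, so its symmetrization cannot ``rotate away'' the obstruction).

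\textbf{The main obstacle} I anticipate is the bookkeeping in the symmetrized product palettes: one must carefully enumerate which triples $\sym{\PP_{\rm LM}}$ and $\sym{\PP_{\rm 3T}}$ actually contain (six per original triple, with clones in specific slots) and verify that in the product no combination of an original-or-clone choice lets a color avoid landing in a middle slot while its partner avoids the forced-$\omega$ collision. I would organize this by first recording, for each source palette, the set of ``roles'' (left-only, right-only, both, middle) each color can occupy across all feasible triples, then observing that a homomorphism to $\PP_{4/81}$ must respect a coarsening of these roles, and finally exhibiting one explicit pair (or short chain) of feasible triples in each product that no role-assignment into $\{\alpha,\beta,\gamma,\omega\}$ can satisfy. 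Keeping the case analysis short will rely on choosing the witnessing triples cleverly rather than brute-forcing all products.
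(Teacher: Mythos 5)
Your proposal is correct and follows essentially the same route as the paper: for $\PP_{\rm LM}\times\sym{\PP_{\rm 3T}}$ one exhibits a color that appears both as a middle and as a left color in feasible triples (the paper uses $(\omega,\beta)$, which is middle in $((\alpha,\alpha),(\omega,\beta),(\gamma,\omega))$ and left in $((\omega,\beta),(\beta',\alpha),(\gamma',\symc{\omega}))$, impossible since in $\PP_{4/81}$ the only middle color $\beta$ is never left or right), and for $\PP_{\rm 3T}\times\sym{\PP_{\rm LM}}$ one exhibits a length-three chain of feasible triples, which $\PP_{4/81}$ cannot realize because any such chain forces $\omega$ into contradictory roles. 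You leave the explicit bookkeeping as an acknowledged obstacle, but the witnesses you anticipate do exist (e.g.\ $\sym{\PP_{\rm LM}}$ contains the chain $(\alpha,\omega,\gamma)$, $(\gamma,\symc{\alpha},\symc{\omega})$, $(\symc{\omega},\symc{\gamma},\alpha)$), so the argument closes exactly as you outline.
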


\begin{proof}
Observe that the palette $\PP_{\rm LM}\times\sym{\PP_{\rm 3T}}$
contains the following two feasible triples:
\[((\alpha,\alpha),(\omega,\beta),(\gamma,\omega))\mbox{ and }((\omega,\beta),(\beta',\alpha),(\gamma',\symc{\omega}).\]
In particular, the color $(\omega,\beta)$ is the middle color in a feasible triple and
the left color in another feasible triple of the palette $\PP_{\rm LM}\times\sym{\PP_{\rm 3T}}$.
Since there is no such color in the palette $\PP_{4/81}$,
there is no homomorphism from the palette $\PP_{\rm LM}\times\sym{\PP_{\rm 3T}}$ to the palette $\PP_{4/81}$.

Next observe that the palette $\PP_{\rm 3T}\times\sym{\PP_{\rm LM}}$
contains the following feasible triples:
\[
((\alpha,\alpha),(\beta,\omega),(\omega,\gamma))\mbox{, }
((\omega,\gamma),(\beta',\symc{\alpha}),(\omega',\symc{\beta}))\mbox{ and }
((\omega',\symc{\beta}),(\beta'',\symc{\gamma}),(\gamma'',\alpha)).
\]
In particular, 
the palette $\PP_{\rm 3T}\times\sym{\PP_{\rm LM}}$
contains a feasible triple such that
the color $x=(\omega,\gamma)$ is the left color and the color $y=(\omega',\symc{\beta})$ is the right color,
there is another feasible triple where $x$ is the right color, and
there is a third feasible triple where $y$ is the left color.
However, the palette $\PP_{4/81}$ contains no two such (not necessarily distinct) colors $x$ and $y$, and
so there is no homomorphism from the palette $\PP_{\rm 3T}\times\sym{\PP_{\rm LM}}$ to the palette $\PP_{4/81}$.
\end{proof}

We are now ready to show that there exists a hypergraph with uniform Tur\'an density equal to $4/81$.

\begin{theorem}
\label{thm:481}
There exists a hypergraph with uniform Tur\'an density equal to $4/81$.
\end{theorem}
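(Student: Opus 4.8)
The plan is to combine Theorem~\ref{thm:palette} with Corollary~\ref{cor:multi} and the three lemmas just established. By Theorem~\ref{thm:palette}, the uniform Tur\'an density of a hypergraph $H$ equals the supremum of the Lagrangians of palettes $\PP$ with $H$ not $\PP$-colorable. So it suffices to produce a hypergraph $H$ that (a) \emph{is} $\PP_{\rm LM}$-colorable and $\PP_{\rm 3T}$-colorable, and (b) is \emph{not} $\PP_{4/81}$-colorable. Indeed, for such an $H$: Proposition~\ref{prop:Lagrangian} together with Lemma~\ref{lm:Lagrangian481} gives that the uniform Tur\'an density is at least $L(\PP_{4/81})=4/81$; conversely, for any palette $\PP$ with $d(\PP)>4/81$, Lemma~\ref{lm:above481} provides a homomorphism from $\PP_{\rm LM}$ to $\PP$ or to $\inv(\PP)$, or a homomorphism from $\PP_{\rm 3T}$ to $\PP$, so by Proposition~\ref{prop:hom} (applied, in the $\PP_{\rm LM}\to\inv(\PP)$ case, after reversing the vertex order) $H$ is $\PP$-colorable, which means $\PP$ does not contribute to the supremum. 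Combined with the remark after Theorem~\ref{thm:palette} that density can replace the Lagrangian, this pins the uniform Tur\'an density of $H$ at exactly $4/81$.

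The remaining task is to verify that a hypergraph $H$ as in (a)--(b) exists, and this is exactly what Corollary~\ref{cor:multi} is designed to deliver. I would apply Corollary~\ref{cor:multi} with $r=2$, $\PP_1=\PP_{\rm LM}$, $\PP_2=\PP_{\rm 3T}$, $r'=1$, and $\PP'_1=\PP_{4/81}$. The corollary says such an $H$ exists if and only if there is no homomorphism from $\PP_{\rm LM}\times\sym{\PP_{\rm 3T}}$ to $\PP_{4/81}$ or to $\inv(\PP_{4/81})$, and no homomorphism from $\PP_{\rm 3T}\times\sym{\PP_{\rm LM}}$ to $\PP_{4/81}$ or to $\inv(\PP_{4/81})$. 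The palette $\PP_{4/81}$ is symmetric under reversal: its triple set $\{(\alpha,\beta,\gamma),(\alpha,\beta,\omega),(\omega,\beta,\gamma)\}$ reversed is $\{(\gamma,\beta,\alpha),(\omega,\beta,\alpha),(\gamma,\beta,\omega)\}$, and the swap $\alpha\leftrightarrow\gamma$ is an isomorphism $\PP_{4/81}\to\inv(\PP_{4/81})$; hence a homomorphism to $\inv(\PP_{4/81})$ exists iff one to $\PP_{4/81}$ does. Therefore the four non-existence conditions reduce to the two statements proved in Lemma~\ref{lm:nohom481}. This yields the desired hypergraph $H$.

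Putting the pieces together: fix the hypergraph $H$ obtained from Corollary~\ref{cor:multi} as above; then $H$ is $\PP_{\rm LM}$-colorable and $\PP_{\rm 3T}$-colorable but not $\PP_{4/81}$-colorable. The lower bound on the uniform Tur\'an density follows from Proposition~\ref{prop:Lagrangian} and Lemma~\ref{lm:Lagrangian481}; the matching upper bound follows from Theorem~\ref{thm:palette} (in the density form), since every palette of density exceeding $4/81$ colors $H$ by Lemma~\ref{lm:above481} and Proposition~\ref{prop:hom}. Hence the uniform Tur\'an density of $H$ is exactly $4/81$.

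I do not expect a genuine obstacle here, since all the hard work has been isolated into the three lemmas and into the general machinery; the only subtlety to get right is the bookkeeping with $\inv(\cdot)$---specifically observing that $\PP_{4/81}$ is reversal-symmetric so that Lemma~\ref{lm:nohom481} indeed covers all four conditions of Corollary~\ref{cor:multi}, and that in Lemma~\ref{lm:above481} the $\PP_{\rm LM}\to\inv(\PP)$ case gives $\PP$-colorability of $H$ by reversing the witnessing vertex order. A minor point worth a sentence is confirming that we may use the density rather than the Lagrangian in the upper-bound direction, which is the remark immediately following Theorem~\ref{thm:palette}.
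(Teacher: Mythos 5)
Your proposal is correct and follows essentially the same route as the paper: apply Theorem~\ref{thm:multi} (your use of Corollary~\ref{cor:multi} with $r'=1$ is the same statement) using the reversal symmetry of $\PP_{4/81}$ so that Lemma~\ref{lm:nohom481} supplies all four non-homomorphism conditions, then combine Proposition~\ref{prop:Lagrangian} with Lemma~\ref{lm:Lagrangian481} for the lower bound and Lemma~\ref{lm:above481}, Proposition~\ref{prop:hom}, and Theorem~\ref{thm:palette} (in density form) for the upper bound. No gaps; the argument matches the paper's proof.
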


\begin{proof}
As the palettes $\PP_{4/81}$ and $\inv(\PP_{4/81})$ are the same up to renaming colors,
Lemma~\ref{lm:nohom481} yields that
there is no homomorphism from palette $\PP_{\rm LM}\times\sym{\PP_{\rm 3T}}$ to $\PP_{4/81}$ or $\inv(\PP_{4/81})$ and
there is no homomorphism from the palette $\PP_{\rm 3T}\times\sym{\PP_{\rm LM}}$ to $\PP_{4/81}$ or $\inv(\PP_{4/81})$.
Theorem~\ref{thm:multi} implies that
there exists a hypergraph $H$ that is $\PP_{\rm LM}$-colorable and $\PP_{\rm 3T}$-colorable
but not $\PP_{4/81}$-colorable.
We claim that the uniform Tur\'an density of $H$ is equal to $4/81$.

Proposition~\ref{prop:Lagrangian} and Lemma~\ref{lm:Lagrangian481} yield that the uniform Tur\'an density of $H$ is at least $4/81$.
Let $\PP$ be any palette with density larger than $4/81$.
By Lemma~\ref{lm:above481},
there exists a homomorphism from $\PP_{\rm LM}$ to $\PP$ or to $\inv(\PP)$, or
there exists a homomorphism from $\PP_{\rm 3T}$ to $\PP$.
Since the hypergraph $H$ is $\PP_{\rm LM}$-colorable and also $\PP_{\rm 3T}$-colorable,
Proposition~\ref{prop:hom} implies that $H$ is $\PP$-colorable or $\inv(\PP)$-colorable (note that
the latter is equivalent to being $\PP$-colorable).
We conclude that the hypergraph $H$ is $\PP$-colorable for any palette $\PP$ with density larger than $4/81$ and
so with any palette $\PP$ with Lagrangian larger than $4/81$.
Theorem~\ref{thm:palette} now yields that the uniform Tur\'an density of $H$ is equal to $4/81$.
\end{proof}

\section{Conclusion}
\label{sec:concl}

The statement of Theorem~\ref{thm:multi} includes the (somewhat technical) operation of symmetrization of a palette,
which is necessary in this context as we now briefly argue.
Recall that the hypergraph $K_4^{(3)-}$ is $\PP$-colorable and $\inv(\PP)$-colorable
but not $\PP\times\inv(\PP)$-colorable,
where $\PP$ is the palette with two colors, $\alpha$ and $\beta$, and
two feasible triples $(\alpha,\beta,\alpha)$ and $(\alpha,\beta,\beta)$.
In particular, setting $\PP_1=\PP$, $\PP_2=\inv(\PP)$ and $\PP_0=\PP\times\inv(\PP)$
demonstrates that
it is not possible to replace the product involving symmetrization
with just the product $\PP_1\times\cdots\times\PP_r$ in the statement of Theorem~\ref{thm:multi}:
the hypergraph $K_4^{(3)-}$ is both $\PP_1$-colorable and $\PP_2$-colorable,
there is a homomorphism from $\PP_1\times\PP_2$ to $\PP_0$ and yet $H$ is not $\PP_0$-colorable.

We now discuss additional settings where the results presented in this paper can be applied.
By identifying the colors of a palette with vertices,
the admissible triples can be seen as oriented edges in a hypergraph,
that is, edges in which the vertices come with a certain order.
With this context, it is possible to use Theorem~\ref{thm:single} and Theorem~\ref{thm:multi}
to relate known results about the Tur\'an density of hypergraphs to the uniform Tur\'an density.
This relation is explored by Wu and the third author in~\cite{LamW25}

One of the results presented in~\cite{LamW25} is the existence of hypergraphs
whose uniform Tur\'an density has arbitrarily high algebraic degree.
The algebraic degree of a real number $\alpha$
is the lowest degree of a nonzero polynomial $p(x)$ with integer coefficients such that $p(\alpha)=0$.
Liu and Pikhurko~\cite{LiuP23} proved that for every $k$, there exists a finite family $\cal F$ of $3$-uniform hypergraphs
whose Tur\'an density has algebraic degree at least $k$.
Combining this result with Theorem~\ref{thm:multi} one can prove the following:

\begin{theorem}[{Lamaison and Wu~\cite{LamW25}}]
\label{thm:algdegree}
For every $k$, there exists a $3$-uniform hypergraph $H$ whose uniform Tur\'an density has algebraic degree at least $k$.
\end{theorem}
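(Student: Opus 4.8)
The plan is to reduce to the theorem of Liu and Pikhurko and then run the scheme from the proof of Theorem~\ref{thm:481}, with the constant $4/81$, the palettes $\PP_{\rm LM},\PP_{\rm 3T}$, and the palette $\PP_{4/81}$ replaced by data read off from the Liu--Pikhurko family. First I would fix $k$ and invoke their result to obtain a finite family $\CF=\{F_1,\dots,F_t\}$ of $3$-uniform hypergraphs whose ordinary Tur\'an density $\tau:=\pi(\CF)$ has algebraic degree at least $k$. It then suffices to produce a single $3$-uniform hypergraph $H$ with uniform Tur\'an density exactly $\tau$, since its uniform Tur\'an density is then an algebraic number of degree at least $k$.

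The decisive step is to translate the extremal structure of $\CF$-free hypergraphs into palettes, using the viewpoint of Section~\ref{sec:concl} that a palette is an ordered $3$-uniform ``hypergraph'' on its colour set with Lagrangian equal to the Lagrangian of that weighted structure, and that the hypergraphs colourable by a palette are governed by the constructions attached to it. The aim is to exhibit finitely many palettes $\PP_1,\dots,\PP_r$ together with one palette $\PP_\tau$ such that: (a) every palette $\PP$ with $d(\PP)>\tau$ admits a homomorphism from some $\PP_i$ or from some $\inv(\PP_i)$; (b) $L(\PP_\tau)=\tau$; and (c) for every $q\in[r]$ there is no homomorphism from $\PP_q\times\prod_{s\in[r]\setminus\{q\}}\sym{\PP_s}$ to $\PP_\tau$ or to $\inv(\PP_\tau)$. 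Here the palettes $\PP_1,\dots,\PP_r$ should encode the finitely many dense ``patterns'' compatible with $\CF$-freeness, so that (a) is the global analogue of Lemma~\ref{lm:above481} (any construction of density above $\pi(\CF)$ must already contain a member of $\CF$), (b) expresses $\pi(\CF)$ as the Lagrangian of the extremal pattern, and (c) is the analogue of Lemma~\ref{lm:nohom481}, to be verified directly from the combinatorics of the symmetrization operation.

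Given (b) and (c), Theorem~\ref{thm:multi} applied with $\PP_0=\PP_\tau$ produces a hypergraph $H$ that is $\PP_i$-colorable for every $i\in[r]$ but not $\PP_\tau$-colorable. I would then compute its uniform Tur\'an density exactly as in Theorem~\ref{thm:481}. Since $H$ is not $\PP_\tau$-colorable and $L(\PP_\tau)=\tau$, Proposition~\ref{prop:Lagrangian} gives that it is at least $\tau$. For the upper bound, let $\PP$ be any palette with $L(\PP)>\tau$; passing as in Section~\ref{sec:example} to a suitable blow-up $\PP'$ of $\PP$ with $d(\PP')>\tau$ admitting a homomorphism onto $\PP$, property (a) together with Proposition~\ref{prop:hom} (and the fact that an unordered hypergraph is $\PP_i$-colorable if and only if it is $\inv(\PP_i)$-colorable) shows that $H$ is $\PP'$-colorable, hence $\PP$-colorable; Theorem~\ref{thm:palette} then bounds the uniform Tur\'an density of $H$ by $\tau$. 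Therefore the uniform Tur\'an density of $H$ equals $\tau=\pi(\CF)$, which has algebraic degree at least $k$.

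The main obstacle is the translation in the second step: extracting from the Liu--Pikhurko family the finite list of generating palettes $\PP_1,\dots,\PP_r$ and the extremal palette $\PP_\tau$, and in particular proving the coverage statement (a). This requires a description of the extremal configurations for $\CF$ precise enough to rule out, up to homomorphism, every palette of density above $\pi(\CF)$ --- the global analogue of Lemma~\ref{lm:above481}, which in the example was a short counting argument but in general must reflect the full structure theory behind the chosen family. On top of this one must, as in Lemma~\ref{lm:nohom481}, control how the symmetrization operation interacts with the pattern structure in order to establish (c). Everything else is bookkeeping around Theorem~\ref{thm:multi}, Theorem~\ref{thm:palette}, and Propositions~\ref{prop:Lagrangian} and~\ref{prop:hom}, mirroring Section~\ref{sec:example}.
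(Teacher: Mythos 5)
Your overall strategy---invoke Liu--Pikhurko, translate into palettes, and then run the Section~\ref{sec:example} scheme through Theorem~\ref{thm:multi}---is indeed the intended route; the paper itself offers no proof of this theorem, only the remark that it follows by ``combining'' Liu--Pikhurko's result with Theorem~\ref{thm:multi}, deferring the details to~\cite{LamW25}. However, as written your argument has a genuine gap, and you have correctly located it yourself: everything hinges on producing, from the family $\CF$, finitely many palettes $\PP_1,\dots,\PP_r$ and a palette $\PP_\tau$ satisfying your conditions (a)--(c), and none of these is established. Condition (a) is the serious one. It asserts that a \emph{finite} list of palettes covers, via homomorphisms, \emph{every} palette of density exceeding $\pi(\CF)$; this is a strong structural statement about all dense palettes, not about the family $\CF$, and it does not follow from Liu--Pikhurko's theorem, which supplies only $\CF$ and the value of $\pi(\CF)$, not a structure or stability theorem for near-extremal configurations. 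In the $4/81$ example the analogous Lemma~\ref{lm:above481} was a bespoke counting argument exploiting the very simple shape of $\PP_{\rm LM}$ and $\PP_{\rm 3T}$; you give no indication of how to manufacture such a covering family for an arbitrary $\tau=\pi(\CF)$, and a priori it need not exist at all. Condition (b) is also unaddressed: $\pi(\CF)$ is in general only a supremum of Lagrangians of admissible patterns and need not be attained by a single palette, so one must use specific features of the Liu--Pikhurko construction to exhibit $\PP_\tau$ with $L(\PP_\tau)=\tau$ exactly. Finally, (c), the non-existence of homomorphisms from the symmetrized products, is a delicate finite verification which in Lemma~\ref{lm:nohom481} depended on the precise triples of the palettes involved and cannot be asserted for unspecified $\PP_1,\dots,\PP_r$.

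The bookkeeping in your final step---Theorem~\ref{thm:multi} to get $H$, Proposition~\ref{prop:Lagrangian} with (b) for the lower bound, the blow-up argument plus Proposition~\ref{prop:hom} and Theorem~\ref{thm:palette} for the upper bound---is sound conditional on (a)--(c), and mirrors Theorem~\ref{thm:481} faithfully. But that is exactly the routine part; the content of the theorem lives in the translation you postpone, so what you have is a plan rather than a proof.
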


\noindent Note that
unlike in Liu and Pikhurko's result,
Theorem~\ref{thm:algdegree} considers the uniform Tur\'an density of a single hypergraph, not a finite family.

\section*{Acknowledgements}

The first two authors would like to thank Matija Buci\'c for insightful discussions on the uniform Tur\'an density.

\bibliographystyle{bibstyle}
\bibliography{hompal}
\end{document}